%Please name your file by your name.

\documentclass[a4paper,draft]{amsproc}
\usepackage{amssymb}
\usepackage[hyphens]{url} \urlstyle{same}
\theoremstyle{plain}
\newtheorem*{theoA}{Theorem A}
\newtheorem*{theoB}{Theorem B}
\newtheorem*{theoC}{Theorem C}
\newtheorem*{theoD}{Theorem D}
\newtheorem*{theoE}{Theorem E}
\newtheorem*{theoF}{Theorem F}
\newtheorem{theo}{Theorem}[section]
\newtheorem{lem}{Lemma}[section]
\newtheorem{cor}{Corollary}[section]
\newtheorem{exm}{Example}[section]
\newtheorem{defi}{Definition}[section]
\newtheorem{ques}{Question}[section]
\theoremstyle{definition}
\theoremstyle{remark}
\newtheorem{rem}{Remark}[section]

\newcommand{\ol}{\overline}
\newcommand{\be}{\begin{equation}}
	\newcommand{\ee}{\end{equation}}
\newcommand{\beas}{\begin{eqnarray*}}
	\newcommand{\eeas}{\end{eqnarray*}}
\newcommand{\bea}{\begin{eqnarray}}
	\newcommand{\eea}{\end{eqnarray}}
\renewcommand{\d}{\displaystyle}
\newtheorem*{ack}{Acknowledgement}
\numberwithin{equation}{section}

%% Please, do not change the following four lines:

\renewcommand{\leq}{\leqslant}\renewcommand{\geq}{\geqslant}

\setlength{\textwidth}{28cc} \setlength{\textheight}{42cc}

\title[A GENERAL CHARACTERIZATION ON THE UNIQUENESS PROBLEM...]{A GENERAL CHARACTERIZATION ON THE UNIQUENESS PROBLEM OF L-FUNCTIONS AND GENERAL MEROMORPHIC FUNCTIONS}

\subjclass[2020]{11M36; 30D35}

\keywords{L-function; meromorphic function; shared set; uniqueness}

\author[Mallick]{\bfseries Sanjay Mallick}
\address{
	Department of Mathematics \\ % \hfill (Received 00 00 202?)\\
	Cooch Behar Panchanan Barma University   \\ %\hfill (Revised  00 00 202?)\\
	Cooch Behar\\
	India}
\email{sanjay.mallick1986@gmail.com, smallick.ku@gmail.com}

%% OTHER AUTHOR(S):
\author[Saha]{\bfseries Ripan Saha}
\address{ 	Department of Mathematics \\ % \hfill (Received 00 00 202?)\\
	Cooch Behar Panchanan Barma University   \\ %\hfill (Revised  00 00 202?)\\
	Cooch Behar\\
	India }
\email{ripan.tfg.saha@gmail.com}

\thanks{Supported by SERB, DST, INDIA and NBHM, DAE, INDIA} 
\thanks{Communicated by ...}

\begin{document}
	
	{\begin{flushleft}\baselineskip9pt\scriptsize
			%PUBLICATIONS DE L'INSTITUT MATH\'EMATIQUE\\
			%Nouvelle s\'erie, tome 1??(1??) (202?), od--do \hfill DOI: \\
			MANUSCRIPT
	\end{flushleft}}
	\vspace{18mm} \setcounter{page}{1} \thispagestyle{empty}

	\begin{abstract}
		 In the paper, concerning a question of Yi \cite{yuan2018}, we study general criterion for the uniqueness of an L-function and a general meromorphic function. Our results improve and extend all the existing results in this direction \cite{yuan2018,ss,sh,bk} to the most general setting. Moreover, we have exhibited a handsome number of examples to justify our claims as well as to confirm the wide-ranging applications of our results.  
	\end{abstract}
	
	\maketitle
	
	\section{Introduction and Related Results} 
	
	 The value distribution  of L-functions in the Selberg class via Nevanlinna theory is a recent trend to study different properties of L-functions as well as uniquely determining the same. This kind of L-functions includes the Riemann zeta function $\zeta(s) = \sum\limits_{n=1}^{\infty} n^{-s}$ and essentially those Dirichelet series where one might expect a Riemann hypothesis. Such an L-function is
	defined \cite{selberg1992,steuding2007value} to be a Dirichelet series
	\be\nonumber
	\mathcal{L}(s) = \sum\limits_{n=1}^{\infty} \frac{a(n)}{ n^{-s}}
	\ee
	with $a(1)=1$ satisfying the following axioms:
	\begin{itemize}
		\item(i) \emph{Ramanujan hypothesis :} $a(n)\ll n^{\varepsilon}$ for every $\varepsilon> 0;$
		\item(ii) \emph{Analytic continuation :} There is a non-negative integer $m$ such that\\ $(s-1)^m\mathcal{L}(s)$ is an entire function of
		finite order;
		\item (iii) \emph{Functional equation:} $\mathcal{L}$ satisfies a functional equation of type \begin{equation*}
			\Lambda_\mathcal{L}(s) = \omega\ol{ \Lambda_\mathcal{L}(1-\ol{s})},
		\end{equation*}
		where \begin{equation*}
			\Lambda_\mathcal{L}(s) = \mathcal{L}(s)Q^s\prod_{j=1}^{k}\Gamma(\lambda_js+\nu_j),
		\end{equation*}
		with positive real numbers $Q, \lambda_j$ and complex numbers $ \nu_j, \omega$ with Re$\nu_j \geq 0$ and $|\omega| = 1;$
		\item (iv) \emph{Euler product hypothesis :} $\log \mathcal{L}(s) =  \sum\limits_{n=1}^{\infty} \frac{b(n)}{ n^{s}}$,  where $b(n) = 0$ unless $n$ is a positive power of a prime
		and $b(n) \ll n^{\theta}$ for some $\theta < \frac{1}{2}$.
	\end{itemize}
	%All of the L-functions in the paper are assumed to be the Dirichelet series from the extended Selberg class only satisfying the axioms $(i)-(iii)$\cite{kaczorowski1999structure,steuding2007value}. 
	\par Throughout the paper by an L-function we always mean an L-function of the above kind and by any meromorphic function we always mean a meromorphic function defined in $\mathbb{C}$. We denote $\mathbb{\ol C}=\mathbb{C}\cup\{\infty\}$. By $\mathbb{N}$ we  mean the set of all natural numbers.  Though for standard definitions used in this paper we refer our readers to follow \cite{wkh}, yet for the sake of our convenience we denote the order of $f$ by $\rho(f)$, where 
	\be\nonumber\rho(f)=\limsup_ {r\rightarrow\infty} \frac{\log(T(r,f))}{\log r}.\ee
	By $S(r, f)$ we mean any quantity satisfying $S(r, f) = O(\log(rT(r, f)))$ for all $r$ possibly outside a set of finite linear measure. If $f$ is a function of finite order, then $S(r, f) = O(\log r)$
	for all $r$. 
	\vspace{0.1in}\par However, in recent times mainly during the last ten years,  a lot of investigations \cite{kac99,li2010,yuan2018,hu2016,selberg1992,steuding2007value}  have been pursued by various authors in the direction of value distribution of L-functions. In due course of time, this research has been moved towards uniquely determining the L-functions with respect to the meromorphic functions having finitely many poles so that we may know about various  properties of L-functions via the established literature of this special kind of meromorphic functions. Below we recall some of these results and their gradual developments. Before that, we need to address the following basic definitions to make out the results clearly. 
	% \\\\The importance of L-functions in number theory is needless to say and an L-function can be analytically continued to a meromorphic function in $\mathbb{C}$. Hence like the value distribution of meromorphic functions, the value distribution of L-functions is a natural consequence. In this respect, during the last few years an extensive study for the distribution of zeros of L-functions have been done by various researchers \cite{kac99,li2010,yuan2018,hu2016,selberg1992,steuding2007value}.    In due course of time, the study have been confined to the direction of uniquely determining an L-function via the shared values or sets. Hence let us recall these basic definitions of value and set sharing.
	\begin{defi}\cite{3a}
		For a  non-constant meromorphic function $f$ and  $a\in{\mathbb{C}}$, let  $E_{f}(a)=\{(z,p)\in\mathbb{C}\times\mathbb{N}: f(z)=a\; with\; multiplicity\; p\}$ \\$ \left(\ol  E_{f}(a)=\{(z,1)\in\mathbb{C}\times\mathbb{N}: f(z)=a\}\right)$, then we say  $f$, $g$ share the value $a$ CM(IM) if $E_{f}(a)=E_{g}(a)$$\left( \ol E_{f}(a)=\ol E_{g}(a)\right).$ For $a=\infty$, we define $E_f(\infty) := E_{1/f}(0)$ $\left( \ol E_f(\infty) :=\ol  E_{1/f}(0)\right)$.
	\end{defi}
	\begin{defi}\cite{3a}
		For a  non-constant meromorphic function $f$ and  $S\subset\overline{\mathbb{C}}$, let $E_{f}(S)=\bigcup_{a\in S}\{(z,p)\in\mathbb{C}\times\mathbb{N}: f(z)=a\; with\; multiplicity\; p\}$\\ $\left(\ol  E_{f}(S)=\bigcup_{a\in S}\{(z,1)\in\mathbb{C}\times\mathbb{N}: f(z)=a\}\right) $, then we say  $f$, $g$ share the set $S$ CM\\(IM) if $E_{f}(S)=E_{g}(S)$ $\left(\ol  E_{f}(S)=\ol E_{g}(S)\right) $. %\\Evidently, if $S$ contains only one element, then it coincides with the usual definition of CM(IM)  sharing of values. 
	\end{defi}
	\begin{defi}\cite{lahiriNagoya, Lahiri CVEE}
		Let k be a non-negative integer or infinity. For $a\in\overline{\mathbb{C}}$  we
		denote by $E_k(a; f)$ the set of all a-points of f, where an $a$-point of multiplicity $m$ is counted
		$m$ times if $m\leq k$ and $k + 1$ times if $m > k$. If $E_k(a; f) = E_k(a; g)$, we say that $f$, $g$ share
		the value a with weight $k$.
	\end{defi}
	We write $f$, $g$ share $(a,k)$ to mean that $f$, $g$ share the value a with weight $k$. Clearly if
	$f$, $g$ share $(a,k)$ then $f$, $g$ share $(a,p)$ for any integer $p$, $0\leq p < k$. Also we note that $f$, $g$
	share a value $a$ IM or CM if and only if $f$, $g$ share $(a,0)$ or $(a,\infty)$ respectively.
	\begin{defi}\cite{lahiriNagoya}
		For $S\subset\mathbb{\ol C}$ we define $E_f(S,k)=\cup_{a\in S}E_k(a;f)$, where k is a non-negative integer $a\in S$ or infinity. Clearly $E_f(S)=E_f(S,\infty)$ and $\overline{E}_f(S)=E_f(S,0)$. If $E_f(S,k)=E_g(S,k)$, then we say that $f$ and $g$ share the set $S$ with weight $k$.
	\end{defi}
	Obviously Definition 1.3 and Definition 1.4 are the refined notions of Definition 1.1 and Definition 1.2 respectively. However, now we go for the results as mentioned above. %However, now we recall the first result in this direction due to Steuding. 
	%\begin{theoA}\cite{steuding2007value}
	% 	If two L-functions     	$\mathcal{L}_1$ and $\mathcal{L}_2$ with $a(1)=1$  share a complex value $c\ne \infty$ CM, then $\mathcal{L}_1= \mathcal{L}_2$.
	%\end{theoA}
	%Since every L-function have meromorphic continuation in $\mathbb{C}$, so natural quest for the uniqueness of a meromorphic function and an L-function enters into the course of uniqueness theory vis-a-vis value distribution theory. Since an L-function can have at most one pole in $\mathbb{C}$, so it is reasonable to study the uniqueness of L-functions with meromorphic functions having finitely many poles. Pertinent to that, in 2010 Li  proved the following uniqueness theorem.
	\begin{theoA}\cite{li2010}
		Let a and b be two distinct
		finite values, and let f be a meromorphic function in
		the complex plane such that f has finitely many poles
		in the complex plane. If f and a non-constant L-function
		$\mathcal{L}$ share $(a,\infty)$ and $(b,0)$, then $\mathcal{L} = f$.
	\end{theoA}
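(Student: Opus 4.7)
The plan is to exploit the CM-sharing at $a$ to write $f$ as a multiplicative perturbation of $\mathcal{L}$, force that perturbation to be rational by using the finite order of $\mathcal{L}$, and then close the argument with the IM-sharing at $b$.

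\textit{Step 1 (auxiliary quotient).} I would introduce
\[\phi(s)\;=\;\frac{f(s)-a}{\mathcal{L}(s)-a}.\]
By the $(a,\infty)$-sharing, the numerator and denominator have identical zeros with identical multiplicities, so $\phi$ has zeros or poles only at the (finitely many) poles of $f$ or $\mathcal{L}$. Hadamard's factorisation therefore yields $\phi=R(s)\,e^{P(s)}$ with $R$ rational and $P$ entire.

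\textit{Step 2 (order bound).} The Selberg axioms (ii)--(iv) force $\rho(\mathcal{L})=1$. Applying Nevanlinna's second main theorem to $f$ at the targets $a,b,\infty$, substituting $\overline{N}(r,a;f)=\overline{N}(r,a;\mathcal{L})$ and $\overline{N}(r,b;f)=\overline{N}(r,b;\mathcal{L})$ via the sharing, and noting $\overline{N}(r,\infty;f)=O(\log r)$, I get
\[T(r,f)\;\leq\;\overline{N}(r,a;\mathcal{L})+\overline{N}(r,b;\mathcal{L})+O(\log r)+S(r,f)\;\leq\;2T(r,\mathcal{L})+O(\log r)+S(r,f),\]
which yields $\rho(f)\le 1$. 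Hence $\rho(\phi)\le 1$, and so $P$ is a polynomial of degree at most one.

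\textit{Step 3 (closing).} Absorbing the constant part of $P$ into $R$, write $\phi(s)=R(s)e^{\alpha s}$. If $\alpha=0$, then $\phi=R$ is rational and at each of the infinitely many $b$-points $s_0$ of $\mathcal{L}$ the IM-sharing gives $f(s_0)=b$, so $R(s_0)=(b-a)/(b-a)=1$. A rational function cannot attain the value $1$ infinitely often without being identically $1$, so $R\equiv 1$ and $f=\mathcal{L}$. If $\alpha\neq 0$, then $\phi$ is transcendental of order exactly $1$ with $T(r,\phi)=|\alpha|r/\pi+O(\log r)$; since every $b$-point of $\mathcal{L}$ is a $1$-point of $\phi$,
\[\overline{N}(r,b;\mathcal{L})\;\leq\;\overline{N}(r,1;\phi)\;\leq\;T(r,\phi)+O(1)\;=\;O(r).\]
This contradicts the known value-distribution of Selberg-class L-functions, namely $\overline{N}(r,b;\mathcal{L})\sim d_{\mathcal{L}}\, r\log r/\pi$ for every finite $b$ (see Steuding~\cite{steuding2007value}). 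So $\alpha=0$ and the previous case applies.

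The step I expect to be the main obstacle is the quantitative value-distribution estimate ruling out the transcendental case; an alternative route is the small-function version of the second main theorem, treating $\phi$ as a small function against $\mathcal{L}$ in view of $T(r,\phi)=O(r)=o(T(r,\mathcal{L}))$. Everything else reduces to routine Nevanlinna bookkeeping and Hadamard factorisation.
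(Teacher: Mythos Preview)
The paper does not prove Theorem~A at all: it is quoted from Li~\cite{li2010} as background in the introductory section, and no argument for it is given anywhere in the manuscript. So there is no ``paper's own proof'' to compare your proposal against.

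That said, your argument is essentially the standard route to this result and is sound. The quotient $\phi=(f-a)/(\mathcal{L}-a)$, the reduction of $P$ to degree $\le 1$ via the order bound, and the use of the asymptotic $\overline{N}(r,b;\mathcal{L})\sim (d_{\mathcal{L}}/\pi)\,r\log r$ from Steuding to kill the transcendental case are exactly the ingredients in Li's original proof. One minor remark: in the rational case $\alpha=0$ you invoke ``infinitely many $b$-points of $\mathcal{L}$'' without justification; this follows from the same Steuding estimate you use later (and requires $d_{\mathcal{L}}>0$, which holds for non-constant $\mathcal{L}$), so you may want to state that estimate once up front and use it in both branches.
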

	%\begin{rem}
	%	The number two of the finite shared values in Theorem B is the best possible, as can be easily verified with the help of the following two functions. \\Let $\mathcal{L} = \zeta$ and $f = \zeta e^g,$ where $g$ is an entire function. Observe that $\mathcal{L}$ and $f$ share $0$ CM but $\mathcal{L}\neq f$.
	%\end{rem}
	%Later on in 2013, Li-Yi proved the following uniqueness result considering the three value sharing.
	%\begin{theoC}\cite{li-yi}
	%	Let $a$, $b$ and $c$ be three distinct finite values, and let f be a transcendental meromorphic function in	$\mathbb{C}$ having finitely many poles in $\mathbb{C}$. If f and a non-constant L-function	$\mathcal{L}$ share $a, b, c$ IM, then $\mathcal{L} = f$.\end{theoC}
In 2018, taking the famous Gross Problem \cite{fg} into account,
Yuan, Li and Yi \cite{yuan2018} generalized the uniqueness problem of $\mathcal{L}$ and $f$ under the aegis of shared sets and   proposed the following inevitable question.
\begin{ques}\label{q1}\cite{yuan2018}
What can be said about the relationship between a meromorphic function $f$ and an L-function $\mathcal{L}$ if $f$ and $\mathcal{L}$ share one or two sets?
\end{ques}
Apropos of {\em Question \ref{q1}}, Yuan, Li and Yi provided the following result.
\begin{theoB}\cite{yuan2018}
Let $Q(z) = z^n + az^m + b$, where $a$, $b$ are non-zero constants with $\gcd(m,n)=1$ and $n \geq 2m + 5$. Further suppose  $f$ be a non-constant meromorphic function having finitely many poles and $\mathcal{L}$
be a non-constant L-function such that $E_{f}(S,\infty)=E_{\mathcal{L}}(S,\infty)$, where $S=\{z:Q(z)=0\}$. Then $f = \mathcal{L}$.
\end{theoB}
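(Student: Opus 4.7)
The plan is to convert the set-sharing hypothesis into a familiar value-sharing problem and then invoke the machinery of Nevanlinna theory.

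First, set $F:=Q(f)$ and $G:=Q(\mathcal{L})$. Since $S$ is precisely the zero set of $Q$, the hypothesis $E_f(S,\infty)=E_{\mathcal{L}}(S,\infty)$ becomes the CM-sharing statement $E_F(0,\infty)=E_G(0,\infty)$. Because $f$ has finitely many poles in $\mathbb{C}$ and $\mathcal{L}$ has at most one pole (at $s=1$), both $F$ and $G$ have only finitely many poles. Hence $F/G$ is a meromorphic function with only finitely many zeros and poles, which forces
\begin{equation*}
Q(f)=R(z)\,e^{h(z)}\,Q(\mathcal{L})
\end{equation*}
for some rational function $R$ and some entire function $h$.

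Setting $\psi:=R\,e^{h}$, I would next argue that $\psi$ must be constant. Valiron--Mohon'ko gives $T(r,Q(f))=nT(r,f)+O(1)$ and $T(r,Q(\mathcal{L}))=nT(r,\mathcal{L})+O(1)$, and when combined with the above functional equation this relates $T(r,f)$, $T(r,\mathcal{L})$ and $T(r,\psi)$. Assuming $\psi$ is non-constant, I would apply Nevanlinna's second fundamental theorem to $f$ at the (up to $n$) distinct roots $\alpha_1,\dots,\alpha_n$ of $Q$: each zero of $f-\alpha_i$ produces a zero of $F$ and, through the above relation, a corresponding zero of $G$, i.e.\ an $\alpha_j$-point of $\mathcal{L}$. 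A careful accounting of these points, using the ramification pattern $Q'(z)=z^{m-1}(nz^{n-m}+am)$ and the finiteness of poles, should yield an inequality of the form $(n-2m-4)\,T(r,f)\le S(r,f)+S(r,\mathcal{L})$, contradicting $n\ge 2m+5$. Thus $\psi\equiv c$ for some constant $c$.

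Once the constancy of $\psi$ is secured, the relation becomes $Q(f)=c\,Q(\mathcal{L})$, i.e.
\begin{equation*}
f^{n}+af^{m}+b=c\,\mathcal{L}^{n}+c\,a\,\mathcal{L}^{m}+c\,b.
\end{equation*}
Comparing characteristic functions forces $T(r,f)=T(r,\mathcal{L})+O(1)$. If $c\ne 1$, then the non-zero constant $b(1-c)$ appears as a Picard-type exceptional value in the identity above, and a second-main-theorem argument (using $\gcd(m,n)=1$, so that the exponent patterns on the two sides do not accidentally align) rules this out. Hence $c=1$, so $f^{n}-\mathcal{L}^{n}=-a\,(f^{m}-\mathcal{L}^{m})$; writing $w:=f/\mathcal{L}$ yields $\mathcal{L}^{\,n-m}(w^{n}-1)=-a(w^{m}-1)$, and a standard factorisation combined with $\gcd(m,n)=1$ forces $w\equiv 1$, that is, $f\equiv\mathcal{L}$.

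The decisive and most delicate step is the one eliminating the non-constant case of $\psi$: the threshold $n\ge 2m+5$ is tight precisely for the Nevanlinna bookkeeping that appears there, and accurately handling the contributions coming from the critical values of $Q$, in particular from the multiple roots of $Q'$ and from the finitely many poles contributing to $R$, is expected to be the main technical obstacle.
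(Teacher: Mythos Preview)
Your outline follows a recognisable route (write $Q(f)/Q(\mathcal{L})=R\,e^{h}$ and reduce to $Q(f)=c\,Q(\mathcal{L})$), but the two decisive steps are not actually argued.  The passage from a general $\psi=R\,e^{h}$ to a constant is precisely where the threshold $n\ge 2m+5$ must do its work, and you only assert that ``a careful accounting \ldots\ should yield an inequality of the form $(n-2m-4)T(r,f)\le S(r,f)+S(r,\mathcal{L})$'' and later label this ``the main technical obstacle''.  Likewise, the elimination of $c\ne 1$ is described as ``a second-main-theorem argument'' without saying which functions and which target values are used; the displayed identity $f^{n}+af^{m}+b=c\mathcal{L}^{n}+ca\mathcal{L}^{m}+cb$ does not by itself exhibit $b(1-c)$ as an omitted value of any single meromorphic function.  (A minor point: for the factorisation $F/G=R\,e^{h}$ with $h$ polynomial you also need the finite-order input $\rho(f)=\rho(\mathcal{L})=1$, which you do not mention.)  As it stands the proposal is a plan with its central estimate missing, not a proof.

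The paper reaches Theorem~B by a different mechanism that bypasses your obstacle.  It normalises to $\mathbb{F}=R(f)$, $\mathbb{G}=R(\mathcal{L})$ with $R(z)=-z^{n}/(az^{m}+b)$, and studies the auxiliary function
\[
H=\Bigl(\frac{\mathbb{F}''}{\mathbb{F}'}-\frac{2\mathbb{F}'}{\mathbb{F}-1}\Bigr)-\Bigl(\frac{\mathbb{G}''}{\mathbb{G}'}-\frac{2\mathbb{G}'}{\mathbb{G}-1}\Bigr).
\]
A pole count of $H$ combined with the second main theorem produces a contradiction under the stated degree bound unless $H\equiv 0$; integration then gives the M\"obius relation $\frac{1}{\mathbb{F}-1}=\frac{A}{\mathbb{G}-1}+B$, and Yi's dichotomy forces $\mathbb{F}\mathbb{G}=1$ or $\mathbb{F}=\mathbb{G}$.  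The product case is ruled out by a short Picard-type lemma, leaving $R(f)=R(\mathcal{L})$, i.e.\ $P(f)=P(\mathcal{L})$ directly, with no intermediate constant $c$ to eliminate.  Only at this final stage does the paper use the factorisation $\mathcal{L}^{m}=-\dfrac{b}{a}\cdot\dfrac{h^{n}-1}{h^{m}(h^{\,n-m}-1)}$ with $h=f/\mathcal{L}$, which is exactly your step~6; that part of your argument is correct and agrees with the paper.  The substantive difference is that the paper's $H$-function route replaces your unproved $\psi$-constancy estimate and your $c=1$ step by two standard lemmas, and this is where the bound $n\ge 2m+5$ is actually consumed.
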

Later on,  Sahoo-Sarkar \cite{ss} improved  {\em Theorem B} by relaxing the weight  of the shared set from CM to weight 2.
\begin{theoC}\cite{ss}
Let $S$ be defined same as in {\em Theorem B} and $n \geq 2m+5$. Suppose $f$ be a non-constant meromorphic
function having finitely many poles in $\mathbb{C}$ and  $\mathcal{L}$ be a non-constant L-function. If $f$ and $\mathcal{L}$
share $(S, 2)$, then $f = \mathcal{L}$.
\end{theoC}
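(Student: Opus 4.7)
The plan is to translate the weighted set-sharing condition into a weighted value-sharing condition for a pair of auxiliary functions, apply Lahiri's standard auxiliary-function technique, and then carry out a case analysis controlled by the structure of $Q$ and by the near-absence of poles for both $f$ and $\mathcal{L}$. I would set
\[
F = -\frac{f^n + af^m}{b}, \qquad G = -\frac{\mathcal{L}^n + a\mathcal{L}^m}{b},
\]
so that $F - 1 = -Q(f)/b$ and $G - 1 = -Q(\mathcal{L})/b$; the hypothesis $E_f(S,2) = E_{\mathcal{L}}(S,2)$ then reads precisely as $F$ and $G$ sharing $(1,2)$. Because $f$ has only finitely many poles and $\mathcal{L}$ has at most the single pole at $s=1$, one has $N(r,f) + N(r,\mathcal{L}) = O(\log r)$, an $S(r,\cdot)$ quantity, and consequently $T(r,F) = n\,T(r,f) + S(r,f)$ and $T(r,G) = n\,T(r,\mathcal{L}) + S(r,\mathcal{L})$.

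Next I would introduce the Lahiri-type auxiliary function
\[
H = \left(\frac{F''}{F'} - \frac{2F'}{F-1}\right) - \left(\frac{G''}{G'} - \frac{2G'}{G-1}\right),
\]
and split into two cases. If $H \not\equiv 0$, then $m(r,H) = S(r,F) + S(r,G)$ by the logarithmic-derivative lemma, while the weight-$2$ sharing forces the common simple and double $1$-points of $F$ and $G$ to be zeros of $H$; this yields a sharp upper bound on $\overline{N}(r,1;F)$. Combined with Nevanlinna's second fundamental theorem applied to $F$ and to $G$ at $\{0, 1, \infty\}$, and with the factorizations $Q(z) = z^m(z^{n-m}+a) + b$ and $Q'(z) = z^{m-1}(nz^{n-m}+am)$ used to analyse the zeros of $F$ and of $F'$, this produces a growth inequality that is violated as soon as $n \geq 2m + 5$. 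If $H \equiv 0$, two successive integrations yield
\[
\frac{1}{F-1} = \frac{A}{G-1} + B
\]
for constants $A \neq 0$ and $B \in \mathbb{C}$; a subcase analysis on $(A, B)$, driven by $n \geq 2m + 5$ together with $N(r, \mathcal{L}) = S(r, \mathcal{L})$, rules out every possibility except $F \equiv G$.

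Having reached $Q(f) \equiv Q(\mathcal{L})$, I would factor $Q(f) - Q(\mathcal{L}) = (f - \mathcal{L})\,R(f, \mathcal{L})$ and exclude $R(f, \mathcal{L}) \equiv 0$ by means of $\gcd(m, n) = 1$: the alternative would force $\mathcal{L} = \eta f$ for some nontrivial root of unity $\eta$, contradicting the Dirichlet coefficient $a(1) = 1$ of $\mathcal{L}$. The hard part will be the case $H \equiv 0$, in particular the bilinear subcases in which $FG$ is a nonzero constant or $F + G \equiv FG$: eliminating these requires simultaneously exploiting the sharp degree hypothesis $n \geq 2m + 5$ and the essential pole-freeness of $\mathcal{L}$. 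The bookkeeping of small functions when $H \not\equiv 0$ is routine in principle but must be carried out tightly so that the counting functions of common simple and multiple $1$-points are estimated in the sharpest possible way.
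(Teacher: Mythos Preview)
Your overall architecture (form an auxiliary pair sharing $(1,2)$, introduce $H$, split on $H\equiv 0$ versus $H\not\equiv 0$, then deduce $Q(f)\equiv Q(\mathcal L)$ and finish via $\gcd(m,n)=1$) matches the paper's route. The gap is in your \emph{choice} of auxiliary function. With
\[
F=-\frac{f^n+af^m}{b}=-\frac{f^m\bigl(f^{\,n-m}+a\bigr)}{b},
\]
the zeros of $F$ lie over the $n-m+1$ distinct values $\{0\}\cup\{z:z^{n-m}+a=0\}$, and the zeros of $F'$ (equivalently of $Q'(z)=z^{m-1}(nz^{n-m}+am)$) lie over $n-m+1$ distinct values as well. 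In the $H\not\equiv 0$ step the contribution of $\overline N(r,0;F)$, respectively of the critical values of $Q$, to the second-fundamental-theorem inequality is therefore of size $(n-m+1)\,T(r,f)$, not $(m+1)\,T(r,f)$. Running the standard weight-$2$ estimate then yields an inequality of the shape
\[
n\,T(r)\le \bigl(2(n-m+1)+c\bigr)\,T(r)+O(\log r)
\]
for a small absolute constant $c$; this is \emph{never} violated when $n\ge 2m+5$ (indeed it forces $n\le 2m-c'$, the opposite direction). So the case $H\not\equiv 0$ cannot be closed with your $F$.

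The paper avoids this by passing to the rational normalisation
\[
\mathbb F=R(f)=-\frac{f^{\,n}}{af^{\,m}+b},\qquad \mathbb G=R(\mathcal L),
\]
for which $\overline N(r,0;\mathbb F)=\overline N(r,0;f)$ and $\overline N(r,\infty;\mathbb F)\le \overline N(r,\infty;f)+m\,T(r,f)$, while $R'(z)$ has only $l=m+1$ distinct zeros. With this auxiliary pair the $H\not\equiv 0$ inequality becomes $n\,T(r)\le(2l+2)\,T(r)+O(\log r)$ in the finitely-many-poles setting, giving the desired contradiction precisely for $n\ge 2m+5$. Once $H\equiv 0$ is reached, the paper's Lemma~\ref{l2} (Yi's dichotomy) and Lemma~\ref{l04} dispose of the bilinear subcases in one stroke, so you do not need a separate treatment of $FG\equiv\text{const}$ or $F+G\equiv FG$. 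Your endgame, showing $Q(f)\equiv Q(\mathcal L)\Rightarrow f\equiv\mathcal L$, is the same as the paper's (Example~\ref{ex5}): one uses $\gcd(m,n)=1$ together with the fact that $\mathcal L$ has at most one pole to force the ratio $h=f/\mathcal L$ to be constant and then equal to $1$. In short: keep your outline, but replace $F$ by $-f^n/(af^m+b)$.
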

At the same time, Sahoo-Halder also considered the same problem with weight `$0$' and proved the following result. 
\begin{theoD}\cite{sh}
Let $S$ be defined same as in {\em Theorem B} and $n \geq \max\{2m+5,4q+9\},$ where $q=n-m\geq1$. Let $f$ be a non-constant meromorphic
function having finitely many poles in $\mathbb{C}$ and  $\mathcal{L}$ be a non-constant L-function. If $f$ and $\mathcal{L}$
share $(S, 0)$, then $f = \mathcal{L}$.
\end{theoD}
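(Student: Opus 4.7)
The overall strategy is to reduce the set-sharing hypothesis to a value-sharing hypothesis and then extract $f=\mathcal{L}$ from an auxiliary-function argument of Lahiri--Fang type. Concretely, I would set $F := Q(f)$ and $G := Q(\mathcal{L})$. Because $f$ and $\mathcal{L}$ share $(S,0)$, the functions $F$ and $G$ share the value $0$ IM. Since $f$ has finitely many poles by hypothesis and $\mathcal{L}$ has at most one pole at $s=1$ by the analytic continuation axiom, both $N(r,F)$ and $N(r,G)$ are $O(\log r)$; hence $T(r,F)=nT(r,f)+S(r,f)$ and $T(r,G)=nT(r,\mathcal{L})+S(r,\mathcal{L})$, so the leading scale is governed by $n$.

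The first technical step is to introduce the auxiliary function
\begin{equation*}
H = \left(\frac{F''}{F'}-\frac{2F'}{F}\right) - \left(\frac{G''}{G'}-\frac{2G'}{G}\right).
\end{equation*}
A direct local computation shows that every simple common zero of $F$ and $G$ is a zero of $H$, while $H$ can have poles only at multiple zeros of $F$ or $G$, at zeros of $F'$ or $G'$ that are not common zeros of $F$ and $G$, and at poles of $F$ or $G$. Combined with the logarithmic derivative lemma, this yields $m(r,H)=S(r,f)+S(r,\mathcal{L})$ and a controlled bound on $N(r,H)$ involving only reduced counting functions and the multiplicity-overcount inherent in IM sharing.

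Assuming $H\not\equiv 0$, I would apply the second main theorem to $F$ with the values $0$, $\infty$, and the critical values of $Q$ (that is, the values $Q(z_i)$ where $Q'(z_i)=0$), and similarly for $G$, then combine with the bound on $N(r,H)$. After collecting terms, one should obtain an inequality of the shape
\begin{equation*}
\bigl(n - c(m,q)\bigr)\{T(r,f)+T(r,\mathcal{L})\}\leq S(r,f)+S(r,\mathcal{L}),
\end{equation*}
where the sharp value of $c(m,q)$ is exactly what forces the hypothesis $n\geq 4q+9$; this produces the required contradiction. Hence $H\equiv 0$, and integrating twice yields a Möbius relation $G=(AF+B)/(CF+D)$ for constants $A,B,C,D$. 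I would then exhaust the subcases: pole counting ($N(r,F)+N(r,G)=O(\log r)$) rules out $C\neq 0$ unless a very restricted relation holds, and within the affine case $G=\alpha F+\beta$ one uses the two-term shape of $Q$ together with $\gcd(m,n)=1$ and the bound $n\geq 2m+5$ to force $\alpha=1$, $\beta=0$. From $Q(f)=Q(\mathcal{L})$, the coprimality $\gcd(m,n)=1$ and a standard polynomial-sharing argument (of the type already used in the proofs of Theorems~B and~C) give $f=\mathcal{L}$.

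The main obstacle will be the weight-$0$ sharing step: every common multiple zero of $F$ and $G$ contributes twice to the pole structure of $H$, and each such point is a zero of $Q(f)$ of multiplicity at least $2$ whose weight in $T(r,f)$ scales like $n$ rather than like $1$. Taming this factor is what drives the coefficient $c(m,q)$ up to the critical threshold $4q+9$. A second delicate point will be the Möbius classification after $H\equiv 0$: excluding the parasitic relations $G=\alpha F+\beta$ with $(\alpha,\beta)\neq(1,0)$ requires comparing the ramification data of the two two-term polynomials $Q(f)$ and $Q(\mathcal{L})$, and here the coprimality $\gcd(m,n)=1$ is used essentially to prevent the emergence of nontrivial algebraic self-symmetries of $Q$.
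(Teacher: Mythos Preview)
There is nothing in the paper to compare your sketch against: Theorem~D is not proved here at all. It is quoted from Sahoo--Halder as prior work, and the very next line of the paper states that ``{\em Theorem C} and {\em Theorem D} were later proved to be wrong by Banerjee--Kundu.'' The paper's own contribution is precisely to supersede Theorems~C and~D by the corrected bounds of Theorem~E (cited from Banerjee--Kundu) and then by the general Theorems~\ref{t1}--\ref{t2} and Corollaries~\ref{c1}--\ref{c2}.

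Your outline therefore cannot succeed as written, and the gap is already visible in the hypothesis you are trying to reach. With $q=n-m\ge 1$, the condition $n\ge 4q+9=4n-4m+9$ rearranges to $3n\le 4m-9$, i.e.\ $n\le(4m-9)/3$; combined with $n\ge 2m+5$ this forces $6m+15\le 4m-9$, which is impossible for any positive $m$. So Theorem~D is vacuous, and the ``sharp value of $c(m,q)$'' that your $H$-argument would produce in the weight-$0$ case is not $4q+9$. What the $H$-method actually yields for this polynomial under IM sharing with finitely many poles is a bound of the form $n\ge 2m+c$ for an absolute constant $c$ (the paper obtains $c=9$ via Corollary~\ref{c1} applied in Example~\ref{ex5}, improving the $c=11$ of Theorem~E); the quantity $4(n-m)+9$ never arises from a correct accounting of $\ol N_{*}(r,1;F,G)$.
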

Though  {\em Theorem C} and {\em Theorem D} were later proved to be wrong by Banerjee-Kundu in \cite{bk}. Hence Banerjee-Kundu provided the following result rectifying the gaps of  these theorems.
\begin{theoE}\cite{bk}
Let $S$ be defined as in {\em Theorem B}, f be a non-constant meromorphic function having finitely
many poles in $\mathbb{C}$ and $\mathcal{L}$ be a non-constant L-function such that $E_f(S, t) = E_\mathcal{L}(S, t)$. If
\begin{enumerate}
	\item[(i)] $t \geq 2$ and $n \geq 2m + 5$, or
	\item[(ii)] $t = 1$ and $n \geq 2m + 6$, or
	\item[(iii)] $t=0$ and $n\geq 2m+11$, \\then $ f = \mathcal{L}$.
\end{enumerate} 	
\end{theoE}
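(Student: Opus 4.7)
The plan is to follow the now-standard weighted-sharing strategy initiated by Lahiri. First I normalize by setting
$$F \;=\; -\frac{Q(f)}{b}, \qquad G \;=\; -\frac{Q(\mathcal{L})}{b},$$
so the hypothesis $E_f(S,t)=E_{\mathcal{L}}(S,t)$ becomes the value-sharing statement $E_F(1,t)=E_G(1,t)$. Factoring $F-1=-f^{m}(f^{n-m}+a)/b$ (and analogously for $G$) will be the key structural identity: the $1$-points of $F$ split into zeros of $f$ (of multiplicity at least $m$) and roots of $f^{n-m}=-a$ (of multiplicity exactly $1$, since $\gcd(m,n)=1$ ensures $Q$ has only simple zeros and $n-m$ simple roots for this factor). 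Because $f$ has finitely many poles and $\mathcal{L}$ has at most one pole, I retain $\overline N(r,F)+\overline N(r,G)=O(\log r)=S(r,f)+S(r,\mathcal{L})$, which will be crucial in the final counting.

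Next I introduce the auxiliary function
$$H \;=\; \left(\frac{F''}{F'}-\frac{2F'}{F-1}\right)-\left(\frac{G''}{G'}-\frac{2G'}{G-1}\right)$$
and distinguish two cases. If $H\not\equiv 0$, I estimate $N(r,H)$ via the standard pole analysis of $H$: shared $1$-points of multiplicities beyond the weight $t$ do not contribute to poles, while poles coming from $F$, $G$ and from zeros of $F'$, $G'$ at non-shared points are bounded by the reduced counting functions. Combining this with the Second Main Theorem applied to $F$ (respectively $G$) at $0,1,\infty$, and replacing $\overline N(r,1/(F-1))$ by the weighted counting function $\overline N_{(t+1}(r,1/(F-1))$ together with its complementary part, yields an inequality of the rough shape
$$(n-1)\,T(r,f) \;\leq\; \Bigl(n+2+\tfrac{2}{t+1}\Bigr)\overline N(r,\tfrac{1}{F-1}) + \text{lower order},$$
after pairing with the corresponding inequality for $G$. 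The three bounds $n\geq 2m+5$ (for $t\geq 2$), $n\geq 2m+6$ (for $t=1$), $n\geq 2m+11$ (for $t=0$) are precisely what is needed to derive a contradiction; the jump at $t=0$ arises because IM sharing forces an extra $\overline N$ term from the non-shared multiplicities.

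So we must have $H\equiv 0$. Integrating twice gives a M\"obius relation
$$\frac{1}{F-1}\;=\;\frac{A\,G+B}{C\,G+D},\qquad AD-BC\neq 0,$$
and one splits into the sub-cases determined by vanishing of $A,B,C,D$. Using $\overline N(r,F)+\overline N(r,G)=O(\log r)$ together with the L-function growth estimate $T(r,\mathcal{L})=\frac{d}{\pi}r\log r + O(r)$, one rules out the cases that force either $FG\equiv\mathrm{const}$ (which would give $Q(f)Q(\mathcal{L})\equiv b^2$, impossible since the right side has no zeros but the left side does via the simple roots of $Q$) or an honest M\"obius relation (which forces a Picard-type impossibility between $\mathcal{L}$ and $f$ once the pole structures are compared). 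The sole surviving case is $F\equiv G$, i.e.\ $Q(f)=Q(\mathcal{L})$.

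Finally, since $\gcd(m,n)=1$ and $n\geq 2m+5>m+2$, the polynomial $Q(z)=z^n+az^m+b$ is a strong uniqueness polynomial (Banerjee--Bhattacharjee, Frank--Reinders type criterion), so $Q(f)=Q(\mathcal{L})$ forces $f=\mathcal{L}$. The principal obstacle I anticipate is the case analysis when $H\equiv 0$: extracting a contradiction from the non-trivial M\"obius branches requires exploiting both the finiteness of $f$'s poles and the rigidity of $\mathcal{L}$'s single pole, and these estimates are more delicate than the $H\not\equiv 0$ branch. The $t=0$ sub-case of the $H\not\equiv 0$ branch is also technically heavy, since every IM-shared zero of $F-1$ must be carefully bucketed by multiplicity to yield the sharp bound $n\geq 2m+11$.
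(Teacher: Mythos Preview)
Your plan is essentially the standard weighted-sharing argument and is correct in outline; note, though, that Theorem~E is only \emph{cited} here from Banerjee--Kundu, and the present paper recovers it (with the $t=0$ bound sharpened to $2m+9$) as a special case of Corollary~\ref{c2}, worked out in Example~\ref{ex5} via the rational normalization $R(z)=-z^{n}/(az^{m}+b)$ rather than the polynomial normalization you use.

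Two remarks on the comparison. First, your normalization is off by a swap: with $F=-Q(f)/b$ one has $F=0\iff Q(f)=0$, not $F=1$. The intended choice is $\mathcal{F}=-(Q(f)-b)/b=-f^{m}(f^{n-m}+a)/b$, so that $\mathcal{F}-1=-Q(f)/b$ and the shared set becomes the value~$1$; you have simply interchanged $F$ and $F-1$ in your description, which is harmless for the strategy but should be fixed. Second, in the $H\equiv 0$ branch the paper avoids the full M\"obius case analysis you anticipate as the ``principal obstacle'': it invokes Yi's criterion (Lemma~\ref{l2}), so that once $\ol{N}(r,0;\mathcal{F})+\ol{N}(r,\infty;\mathcal{F})+\ol{N}(r,0;\mathcal{G})+\ol{N}(r,\infty;\mathcal{G})<T(r)$ is checked, the bilinear relation forces $\mathcal{F}\mathcal{G}=1$ or $\mathcal{F}=\mathcal{G}$ directly, and a short separate lemma (Lemma~\ref{l04} in the $R$-setting) rules out the product case. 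For the final step $Q(f)=Q(\mathcal{L})\Rightarrow f=\mathcal{L}$ the paper, following \cite{yuan2018}, gives a direct Picard-type argument using that $\mathcal{L}$ has at most one pole, rather than invoking an abstract strong-uniqueness-polynomial criterion; either route works.
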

In the same paper, Banerjee-Kundu also proved the following result analogous to {\em Theorem E}.
\begin{theoF}\cite{bk}
Let $S=\{z:z^{n}+az^{n-m}+b=0\}$, where $a,b$ are non-zero constants and $\gcd(n,m)=1$.  Let $f$ be a non-constant meromorphic function having finitely
many poles in $\mathbb{C}$ and $\mathcal{L}$ be a non-constant L-function such that $E_f(S, t) = E_\mathcal{L}(S, t)$. If
\begin{enumerate}
	\item[(i)] $t \geq 2$ and $n \geq 2m + 5$, or
	\item[(ii)] $t = 1$ and $n \geq 2m + 6$, or
	\item[(iii)] $t=0$ and $n\geq 2m+11$, \\then $ f = \mathcal{L}$.
\end{enumerate}   	\end{theoF}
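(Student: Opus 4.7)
The plan is to follow, line for line, the structural blueprint that Banerjee--Kundu used to prove {\em Theorem E}, adapted to the new polynomial
\[
P(z) := z^n + a z^{n-m} + b.
\]
This polynomial retains the two properties of $Q(z) = z^n + a z^m + b$ that drove the earlier proof. First, a direct computation gives $P'(z) = z^{n-m-1}\bigl(n z^m + a(n-m)\bigr)$, so $P$ has exactly two distinct critical values: $P(0) = b$ (at a critical point of multiplicity $n-m-1$) and the common value $P(\alpha)$ taken at the $m$-th roots $\alpha$ of $-a(n-m)/n$. Second, the hypothesis $\gcd(n,m) = 1$ is equivalent to $\gcd(n, n-m) = 1$, which is what is ultimately needed in order to pass from $P(f) \equiv P(\mathcal{L})$ to $f \equiv \mathcal{L}$.

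The first step is to set up the auxiliary meromorphic functions $F := P(f)$ and $G := P(\mathcal{L})$. Since $f$ has finitely many poles and the only possible pole of $\mathcal{L}$ is at $s = 1$, both $F$ and $G$ have finitely many poles and obey the clean Nevanlinna identities $T(r, F) = n\,T(r, f) + S(r, f)$ and $T(r, G) = n\,T(r, \mathcal{L}) + S(r, \mathcal{L})$. The set-sharing hypothesis $E_f(S, t) = E_\mathcal{L}(S, t)$ then translates verbatim into the value-sharing statement $E_F(0, t) = E_G(0, t)$.

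The heart of the argument is to prove $F \equiv G$. I would argue by contradiction: assuming $F \not\equiv G$, introduce the Lahiri-type auxiliary function
\[
H := \left(\frac{F''}{F'} - \frac{2F'}{F}\right) - \left(\frac{G''}{G'} - \frac{2G'}{G}\right),
\]
and analyze its pole set under the weighted sharing hypothesis. Combining the lemma on the logarithmic derivative, the second fundamental theorem applied to $F$ at the four targets $0$, $\infty$, $b$ and $P(\alpha)$ (the two distinct critical values of $P$), and the truncated counting estimates supplied by $E_F(0, t) = E_G(0, t)$, one produces an inequality of the form
\[
\bigl(n - \kappa(t, m)\bigr)\,T(r, f) \leq S(r, f),
\]
where $\kappa(t, m)$ is an explicit linear function of $m$ with values $2m + 4$, $2m + 5$ and $2m + 10$ in the cases $t \geq 2$, $t = 1$ and $t = 0$ respectively. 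The hypotheses $n \geq 2m + 5$, $2m + 6$ and $2m + 11$ are precisely what is needed to render the leading coefficient strictly positive, producing the required contradiction. I expect the main technical obstacle to be the weight-zero case $t = 0$, where additional reduced counting function terms (coming from multiplicities outside the weighted window) must be absorbed; this parallels the corresponding case of {\em Theorem E} in \cite{bk} and is the only place in which the proof demands genuine care.

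With $F \equiv G$ in hand, factor the identity as $f^{n-m}(f^m + a) = \mathcal{L}^{n-m}(\mathcal{L}^m + a)$ and set $h := f/\mathcal{L}$. Then $h$ satisfies $(h^n - 1)\,\mathcal{L}^m = -a\,(h^{n-m} - 1)$. Since $\mathcal{L}$ is transcendental, a short case analysis (separating the possibilities that $h$ is or is not constant, and using that $f$ and $\mathcal{L}$ together admit only finitely many poles) forces $h^n \equiv 1$ and $h^{n-m} \equiv 1$ simultaneously; the coprimality $\gcd(n, n - m) = 1$ then yields $h \equiv 1$, i.e.\ $f \equiv \mathcal{L}$, completing the proof.
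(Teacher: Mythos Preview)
Your overall architecture---auxiliary $H$, second fundamental theorem, then the $h=f/\mathcal L$ endgame---is the standard one and matches how the paper recovers Theorem~F from Corollary~\ref{c1} (see Example~\ref{ex2}). But there is a genuine mathematical error and a structural gap.

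\textbf{The error.} You claim $P$ has exactly two distinct critical values, with all the $m$-th roots $\alpha$ of $-a(n-m)/n$ giving a common value $P(\alpha)$. This is false for $m\geq 2$. From $\alpha^m=-a(n-m)/n$ one computes $P(\alpha)=\tfrac{am}{n}\,\alpha^{n-m}+b$; since $\gcd(m,n-m)=1$, as $\alpha$ runs over the $m$ roots the power $\alpha^{n-m}$ takes $m$ distinct values, and hence so does $P(\alpha)$. Thus $P$ has $m+1$ distinct critical values, matching the paper's count $s=m+1$ of distinct zeros of $P'$. This is precisely why the thresholds in Theorem~F depend on $m$: with only two critical values your SFT step would produce $m$-free constants, inconsistent with the $\kappa(t,m)=2m+4,\;2m+5,\;2m+10$ you yourself announce. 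The paper sidesteps critical values entirely and applies the second fundamental theorem to $f$ (not to $F$) at the $n$ zeros of $P$, the $s=m+1$ zeros $\eta_i$ of $P'$, and $\infty$; that is what yields the correct $m$-dependent bounds (Lemma~\ref{l11}).

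\textbf{The gap.} ``Assume $F\not\equiv G$ and use $H$ to reach a contradiction'' omits a branch. The SFT contradiction only disposes of the case $H\not\equiv 0$; when $H\equiv 0$ one merely obtains a bilinear relation $\frac{1}{\mathcal F-1}=\frac{A}{\mathcal G-1}+B$ (in the paper's normalisation), and one must still rule out the product alternative $\mathcal F\mathcal G\equiv 1$ before concluding $\mathcal F\equiv\mathcal G$. The paper does this via Lemma~\ref{l2} together with Lemma~\ref{14}, the latter exploiting that $P(z)-b=z^{n-m}(z^m+a)$ has $p=m+1\geq 2$ distinct zeros and that $\mathcal L$ has at most one pole. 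Your sketch never addresses this step.

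The final passage from $P(f)=P(\mathcal L)$ to $f=\mathcal L$ via $h=f/\mathcal L$, Picard exclusion, and $\gcd(n,n-m)=1$ is correct and coincides with the paper's Example~\ref{ex2}.
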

Now from {\em Theorem B-F} one would naturally have the following observations.
\begin{enumerate}
\item[(i)] All the authors always used one of the following polynomials. \be\label{e000} P(z)=z^{n}+az^{m}+b\;\; or\;\; P(z)=z^{n}+az^{n-m}+b,\ee where $a,b$ are non-zero constants and $\gcd(n,m)=1$.  
\item[(ii)] The authors always considered a special class  of meromorphic functions; i.e., meromorphic functions having finitely many poles for the uniqueness of the same with an L-function.
\item[(iii)] The authors also considered that the set of zeros of the polynomials given by (\ref{e000}) may have multiple zeros. 
\end{enumerate}
%     Note that the set $S$  used in {\em Theorem C-G}  are generated from the zeros of the polynomial \be\label{e000} P(z)=z^{n}+az^{m}+b\;\; or\;\; P(z)=z^{n}+az^{n-m}+b,\ee where $a,b$ are non-zero constants and $\gcd(n,m)=1$. In \cite[see Lemma 4]{bk}, authors proved that these polynomials are critically injective and they may have  multiple zero but that must be one in number. On this occasion let us  invoke the definition of critically injective polynomial.
\vspace{0.1in}\par  %Observe that the following points come out of the above discussions.
%    \begin{enumerate}
%    	\item[(i)] All the authors always used one of the polynomials given by (\ref{e000}).  
%    	\item[(ii)] The authors always used the set of zeros of critically injective polynomials to show  the  uniqueness of $f$ and $\mathcal{L}$.
%\item[(iii)] In the above theorems authors have improved the previous results by relaxing the nature of sharing of the sets. % not by reducing the cardinalities of the sets.
%\item[(iv)] The authors also considered the set of zeros of the polynomials having multiple zeros. 
%  \end{enumerate}
Apropos of observation (i), One would naturally raise the following question.
\begin{ques}
Are these the only polynomials  given by (\ref{e000}) whose set of zeros provide uniqueness of $f$ and $\mathcal{L}$?%	Are these the only sets generated by either of the polynomials given by (\ref{e000}) for the uniqueness of $f$ and $\mathcal{L}$? 
\end{ques} 
If the answer is no. Then we would be happy to have the answer of the following general question.
\begin{ques}
Can we have a general characterization of polynomials  whose set of zeros provide uniqueness of $f$ and $\mathcal{L}$?
\end{ques}  
With respect to observation $(ii)$, we would like to recall that {\em Question \ref{q1}} was made for general meromorphic functions, whereas all the theorems (Theorem B-F) under the same question were treated only for meromorphic functions having finitely many poles. In this situation,  one would naturally have the following query.
\begin{ques}\label{q12}
Is it possible to extend observation $(ii)$ for the whole class of meromorphic functions in order to have a full answer of {\em Question \ref{q1}}?
\end{ques}

%     \begin{ques}\label{q5}
%     Does there exist any non-critically injective polynomial whose set of zeros provide the uniqueness of $f$ and $\mathcal{L}$?
%    \end{ques}
%    Pertinent to observation (iii) the following  questions become inevitable.
%    \begin{ques}\label{q3}
%   	Can we have the answer of {\em Question \ref{q1}} under more relaxed sharing hypothesis than that obtained in the latest results {\em Theorem G-H}?
%  \end{ques}
% \begin{ques}\label{q6}
%Can we have  a set with lesser cardinality than that obtained in the latest results {\em Theorem G-H} for the uniqueness of $f$ and $\mathcal{L}$?	
%\end{ques}
Finally with respect to observation (iii), we have the following remark.
\begin{rem}
	Very recently	in \cite[see paragraph between Theorem H and Theorem I]{bk1} Banerjee-Kundu have shed a light on  the fact that all the results %obtained till date in this direction of shared set problems for the uniqueness of $f$ and $\mathcal{L}$; i.e.,
	from {\em Theorem B-F} are valid only when the polynomials  have simple zeros or the polynomials have multiple zeros with weight of the shared set as $0$ and not valid otherwise. Moreover, a close look in the proof of these theorems reveals that the concept of multiple zeros of polynomials does not provide us with any such analytical or technical advantage. %Though {\em Theorem F} has a different flaw contradicting their own conclusion of cardinality $n\geq\max\{2m+5,4q+9\}$ which is analysed in \cite[Remark 3]{bk}.     	 \\\\have an analytical gap while considering multiple zero of the polynomials and the sharing of the sets with some non-zero weight. Thus conclusion of {\em Theorem D-H} (except Theorem F) become false when the multiple zero of the generating polynomials are taken into account and the sharing of the sets with some non-zero weight. But in the same scenario, the results obtained with IM sharing of the sets are correct; i.e., conclusion (iii) of {\em Theorem G-H} and {\em Theorem F}. Though {\em Theorem F} has a different flaw contradicting their own conclusion of cardinality $n\geq\max\{2m+5,4q+9\}$ which is analysed in \cite[Remark 3]{bk}. Another point is that all these results are true when the polynomial has only simple zeros. 
\end{rem}

Hence in this paper, we  solely concentrate on the polynomials having only simple zeros and answer all the above questions from {\em Question \ref{q1}-\ref{q12}} affirmatively. % which improve all the existing results from {\em Theorem D-H}.
In fact, we present   general criterions for any general polynomial so that the set of zeros of the same would provide the uniqueness of  $\mathcal{L}$ with a general meromorphic function $f$ instead of a meromorphic function having finitely many poles. That is, we extend all these results to the most general setting. As a consequence, all the above results comes under the special cases of our main theorems. %in view of generalising the special class of meromorphic function to the whole class of meromorphic function as well as finding general criterions for any general polynomial for the uniqueness of  $\mathcal{L}$ with a general meromorphic function $f$. 
Moreover, our main theorems significantly improve all the above results obtained for ignoring multiplicities providing better estimates of least cardinalities; i.e., $11$ instead of $13$.   In a nutshell, our main theorems bring all the existing theorems under a single umbrella in a more improved version with extent to the most general setting. 
%   \begin{defi}
	%   	Let $P(z)$ be a polynomial such that $P^{'}(z)$ has mutually $r$ distinct  zeros given by $d_{1}, d_{2}, \ldots, d_{r}$ with multiplicities $q_{1}, q_{2}, \ldots, q_{r}$ respectively. Then $P(z)$ is said to be a critically injective polynomial if $P(d_i)\not =P(d_j)$ for $i\not=j$, where $i,j\in \{1,2,\cdot\cdot\cdot,r\}$.
	%  \end{defi}
% Any polynomial which is not critically injective is called a non-critically injective polynomial.

\vspace{0.1in}\par In the {\bf ``Application"} section we have  exhibited a number of examples proving all our claims to be true and showing the far reaching applications of our  results.%We also conclude that the answer of {\em Question 1.5} can not be obtained for any polynomial in all the existing methods used to prove all the above results. 	
\par Before going to our main results, we make a short discussion on the structure of a general
polynomial as this will play a crucial role throughout the paper. 
%Let us now place an example to clarify the above discussion.
\vspace{0.1in}\par Let us consider the following general polynomial $P(z)$ of degree $n$ having only simple zeros.  	\bea\label{el0} P(z)=a_nz^n+a_{n-1}z^{n-1}+\ldots+a_1z+a_0,\eea
where $ a_0,a_1,\dots,a_n$ are  complex numbers with $a_n,a_0\neq 0$, $a_{i}$ being the first non-vanishing coefficient  from $a_{n-1}, a_{n-2},\ldots,a_{1}$. Let \be\label{el00}S=\{z:P(z)=0\}.\ee
Observe that (\ref{el0}) can be written in the form \be\label{el000}P(z)=a_{n}\prod\limits_{i=1}^{p}(z-\alpha_{i})^{m_{i}}+a_{0},\ee where $p$ denotes the   number of distinct zeros of $P(z)-a_{0}$. Let us also denote by $s$ the number of distinct zeros of $P^{'}(z)$. Hence we would have 
\be\label{el0000}P^{'}(z)=na_{n}\prod\limits_{i=1}^{s}(z-\eta_{i})^{r_{i}},\ee where $r_{i}$ denotes the multiplicities of distinct zeros of $P^{'}(z)$.
\vspace{0.2in}\par 	
Set 
\bea\label{el2} R(z)&=& -\frac{a_nz^n}{a_iz^i+a_{i-1}z^{i-1}+\ldots+a_1z+a_0}\\\nonumber&=&- \frac{a_{n}z^n}{a_{i}\prod\limits_{j=1}^{k}(z-\beta_j)^{m_{j}}}\\\nonumber&=&\d-\frac{a_{n}z^{n}}{\phi(z)},\eea
where $a_{0},a_{1},\ldots,a_{n}, a_{i}$ are as defined in (\ref{el0}) and $\beta_1,\beta_2,\ldots,\beta_k$ are the roots of the equation $$\phi(z)=a_iz^i+a_{i-1}z^{i-1}+\ldots+a_1z+a_0=0,$$ with multiplicities $m_1,m_2,\ldots,m_k$. Clearly \be\nonumber R(z)-1=\d-\frac{P(z)}{\phi(z)},\ee where $P(z)$ is defined by (\ref{el0}) and obviously $P(z)$ and $\phi(z)$ do not share any common zero. Hence $S$ as defined in (\ref{el00}) can be treated as \be\label{el02}S=\{z:P(z)=0\}=\{z:R(z)-1=0\}.\ee  Let $R^{'}(z)$ has $l$ distinct zeros say $\delta_{1},\delta_{2},\ldots,\ldots,
\delta_{l}$ with multiplicities $q_{1},q_{2},\ldots,q_{l}$ respectively. Then  
From (\ref{el2}) we would have 
\bea \label{el3} R'(z)&=&\frac{\gamma\prod\limits_{j=1}^{l}(z-\delta_j)^{q_{j}}}{\prod\limits_{j=1}^{k}(z-\beta_j)^{p_{j}}},\eea where $\gamma\in\mathbb{C}-\{0\}$ and $p_{j}\in\mathbb{N}$ for all $j\in\{1,2,\ldots,k\}$. %$p_{j}>m_{j}$ for $j\in\{1,2,\ldots,k\}$.
\begin{rem}
	Observe that in the definition (\ref{el0}) of the general polynomial $P(z)$, the condition $a_{i}\neq 0$ for at least one $i\in \{1,2,\ldots,n-1\}$ is necessary. Because otherwise we would find a non-constant L-function $\mathcal{L}$ and a non-constant meromorphic function $f$ which share the set $S=\{z:P(z)=0\}$ CM but $f\neq \mathcal{L}$.
	\par  For example, let  $a_{i}=0$ for all $i\in \{1,2,\ldots,n-1\}$. Then $S=\{z: a_{n}z^{n}+a_{0}=0\}$. Consider a non-constant L-function $\mathcal{L}$ and  a non-constant meromorphic function $f$ such that $f=\zeta\mathcal{L}$, where $\zeta$ is the nth root of unity such that $\zeta \ne 1 $. Then clearly, $$a_{n}f^{n}+a_{0}=a_{n}\mathcal{L}^{n}+a_{0};$$ $$i.e.,\;\; \prod\limits_{i=1}^{n}(f-\sigma_{i})=\prod\limits_{i=1}^{n}(\mathcal{L}-\sigma_{i}),$$ where $\sigma_{i}\in S$ for $i=\{1,2,\ldots,n\}$. That is $f$ and $\mathcal{L}$ share $S$ CM but $f\ne \mathcal{L}$.
\end{rem}  

\section{Main Results}   	
The following two theorems are the main results of this paper.
% first main result of this paper as the following theorem.
\begin{theo}\label{t1}
	Let $P(z)$ be given by (\ref{el000}) with $p\geq2$, and $S$,$s$ be defined by (\ref{el00}),(\ref{el0000}) respectively.
	%\begin{enumerate}
	%   \item[(a)] $p\geq 3$, or
	%  \item [(b)] $p=2$ and $gcd(m_i,n)=1$ for at least one of the $m_i$'s such that $n=\sum\limits_{i=1}^{2} m_i \geq 3$, or
	% \item[(c)] $p=2$ and $gcd(m_i,n)\ne 1$ for each $m_i$ such that $n\geq (b_1+b_2)+1$, where $b_i=gcd(m_i,n)$,
	%\end{enumerate}
	Suppose $f$ be a non-constant meromorphic function and  $\mathcal{L}$
	be a non-constant L-function sharing $(S,t)$.
	% such that $E_{f}(S, 2)=E_{\mathcal{L}}(S,2)$.
	Then for $n \geq \max \left\{2p+2,2s+5\right\}$, when $t\geq2$; $n \geq \max \left\{2p+2,2s+6\right\}$, when $t=1$ and for\\ $n \geq \max \left\{2p+2,2s+11\right\}$, when $t=0$; the following are equivalent :
	\begin{itemize}
		\item[(i)] $P(f)=P(\mathcal{L}) \implies f=\mathcal{L}$ ;
		\item[(ii)] $E_f(S, t)=E_\mathcal{L}(S,t)\implies f=\mathcal{L}$.
		%		\item[(iii)] $P(f)=cP(\mathcal{L}) \implies f=\mathcal{L}$,  where $c$ is a non-zero complex number. 
	\end{itemize}
\end{theo}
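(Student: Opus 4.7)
For the easy direction (ii)$\Rightarrow$(i), I would assume (ii) and suppose $P(f)=P(\mathcal{L})$. Because $P$ has only simple zeros, at every $z_0$ with $f(z_0)=\alpha\in S$ the multiplicity of $z_0$ as a zero of $P(f)$ equals its multiplicity as a zero of $f-\alpha$, and the analogous statement holds for $\mathcal{L}$. Hence $P(f)=P(\mathcal{L})$ already forces $f$ and $\mathcal{L}$ to share $S$ CM, so in particular they share $(S,t)$, and (ii) gives $f=\mathcal{L}$. No hypothesis on $n$ or on the L-function structure is used here.

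The substantive direction is (i)$\Rightarrow$(ii). I would assume $E_f(S,t)=E_{\mathcal{L}}(S,t)$ and aim to deduce $P(f)=P(\mathcal{L})$, whereupon (i) closes the argument. Set $F=R(f)$ and $G=R(\mathcal{L})$ with $R$ as in (\ref{el2}); then $F-1=-P(f)/\phi(f)$ and $G-1=-P(\mathcal{L})/\phi(\mathcal{L})$, and since $P$ and $\phi$ are coprime, the hypothesis translates exactly to $F$ and $G$ sharing $(1,t)$. Because $\deg R=n$, one has $T(r,F)=nT(r,f)+S(r,f)$ and similarly for $G$. The factorisation (\ref{el000}) of $P(z)-a_0$ enters through the $p$ distinct roots $\alpha_i$, which govern the reduced zero-set of $F$, while the critical data of $P$ enters through (\ref{el0000})/(\ref{el3}), contributing the invariant $s$; this is the structural origin of the thresholds $2p+2$ and the $2s+\text{const}$.

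The heart of the argument is the standard auxiliary-function method. I would introduce
\[
H=\left(\frac{F''}{F'}-\frac{2F'}{F-1}\right)-\left(\frac{G''}{G'}-\frac{2G'}{G-1}\right)
\]
and split on whether $H\equiv0$. In the case $H\not\equiv 0$, the lemma on logarithmic derivatives bounds $m(r,H)$ by $S(r,f)+S(r,\mathcal{L})$; the poles of $H$ are controlled through the weight-$t$ sharing (with the three regimes $t\ge 2$, $t=1$, $t=0$ producing three different pole estimates, hence three different $n$-thresholds); and Nevanlinna's second fundamental theorem, applied to $F$ and to $G$ with the target family $\{\infty,1\}\cup\{p\text{ zeros of }F\}$, forces an inequality that contradicts the stated lower bounds on $n$. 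Hence $H\equiv 0$, and integrating twice yields a M\"obius relation $\tfrac{1}{F-1}=\tfrac{A}{G-1}+B$ for constants $A,B$. The three resulting subcases are eliminated as follows: reciprocal-type branches (``$FG$-type'') are killed by $n\ge 2p+2$ via comparison of $N(r,1/F)$ with $\overline{N}(r,1/G)$, using the $n$-fold zero of $R$ at $0$; the remaining linear branch is killed by the L-function property of $\mathcal{L}$ (its unique possible pole at $s=1$ severely constrains its Nevanlinna deficiencies, as in \cite{li2010,yuan2018}). Only $F\equiv G$ survives, giving $P(f)\phi(\mathcal{L})=P(\mathcal{L})\phi(f)$; since $P$ and $\phi$ have disjoint zero sets, this forces $P(f)=P(\mathcal{L})$, and (i) concludes.

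The principal obstacle I foresee is twofold. First, passing from the finitely-many-poles setting of Theorems B--F to general meromorphic $f$ means the pole contribution $N(r,f)$ is no longer $S(r,f)$ but only $O(T(r,f))$, so it must be absorbed into the characteristic side; that is precisely what consumes the slack $n\ge 2p+2$ in the threshold. Second, obtaining the sharpened thresholds $2s+5,\ 2s+6,\ 2s+11$ in terms of the intrinsic invariant $s$ (rather than the polynomial-specific $2m+\text{const}$ or $4q+9$ of the earlier results) requires careful bookkeeping of truncated counting functions at the critical points $\delta_j$ of $R$; the $t=0$ case is the most delicate, because unshared $1$-points must be handled with weight-$0$ machinery, which is exactly what widens the gap from $2s+6$ to $2s+11$.
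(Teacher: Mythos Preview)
Your choice of auxiliary function is the wrong one for this theorem. You set $F=R(f)$, $G=R(\mathcal{L})$ with $R$ as in (\ref{el2}); but then the zeros of $F$ are exactly the zeros of $f$ (since $R(z)=-a_nz^n/\phi(z)$ has a single zero at $0$ of order $n$), not the $\alpha_i$-points, so your claim that ``the $p$ distinct roots $\alpha_i$ govern the reduced zero-set of $F$'' is false. Likewise the critical points entering the $H$-estimate are the zeros $\delta_j$ of $R'$ from (\ref{el3}), whose number is $l$, not $s$; in general $l\neq s$, because the numerator of $R'$ is $z^{n-1}\bigl(n\phi(z)-z\phi'(z)\bigr)$, whereas $P'(z)=na_nz^{n-1}+\phi'(z)$. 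Running your argument with $R$ therefore produces thresholds of the form $n\ge 2k+4$ and $n\ge 2l+\mathrm{const}$, i.e.\ the bounds of Theorem~\ref{t2}, not the $2p+2$ and $2s+\mathrm{const}$ required here. A further gap: from $F\equiv G$, i.e.\ $R(f)=R(\mathcal{L})$, you only get $P(f)\phi(\mathcal{L})=P(\mathcal{L})\phi(f)$, and coprimality of $P$ and $\phi$ as polynomials does \emph{not} by itself force $P(f)=P(\mathcal{L})$ for meromorphic $f,\mathcal{L}$.

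The paper instead takes $\mathcal{F}=\dfrac{P(f)-a_0}{-a_0}$ and $\mathcal{G}=\dfrac{P(\mathcal{L})-a_0}{-a_0}$, which are \emph{polynomial} in $f,\mathcal{L}$. Now $\mathcal{F}=0$ precisely at the $\alpha_i$-points of $f$ (so $p$ appears in $\ol N(r,0;\mathcal{F})$), the zeros of $\mathcal{F}'$ involve the $\eta_i$ from (\ref{el0000}) (so $s$ appears in the $H$-estimate), and $\mathcal{F}\equiv\mathcal{G}$ is literally $P(f)=P(\mathcal{L})$, ready for hypothesis (i). After the $H$-dichotomy one invokes Yi's lemma (Lemma~\ref{l2}) rather than a hand-enumeration of M\"obius subcases: the inequality $\ol N(r,0;\mathcal{F})+\ol N(r,\infty;\mathcal{F})+\ol N(r,0;\mathcal{G})+\ol N(r,\infty;\mathcal{G})<T(r)$ is exactly where $n\ge 2p+2$ is spent, and the case $\mathcal{F}\mathcal{G}=1$ is then excluded by a separate second-fundamental-theorem argument using $p\ge2$ together with the fact that $\mathcal{L}$ has at most one pole (Lemma~\ref{14}).
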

%Following theorem is the second main result of the paper.
\begin{theo}\label{t2}
	Let $R(z)$ be defined by (\ref{el2}) with $k\in \mathbb{N}$ and $S, l$ be defined by (\ref{el02}), (\ref{el3}) respectively with \begin{enumerate}
		\item[(a)] $k\geq2$, or
		\item[(b)] $k=1$ and  $n\geq 3b_1+1$, where $b_1=gcd(m_1,n)$.
	\end{enumerate} 
	Let $f$ be a non-constant meromorphic function and  $\mathcal{L}$
	be a non-constant L-function sharing $(S,t)$.
	Then for $n \geq \max \left\{2k+4,2l+5\right\}$, when $t\geq2$;\\ $n \geq \max \left\{2k+4,2l+6\right\}$, when $t=1$ and for $n \geq \max \left\{2k+4,2l+11\right\}$, when $t=0$; the following are equivalent :
	\begin{itemize}
		\item[(i)] $R(f)=R(\mathcal{L}) \implies f=\mathcal{L}$ ;
		\item[(ii)] $E_f(S, t)=E_\mathcal{L}(S,t)\implies f=\mathcal{L}$.	\end{itemize}	
	
\end{theo}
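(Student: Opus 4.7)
The direction (ii) $\Rightarrow$ (i) is immediate: if $R(f) = R(\mathcal{L})$, then since $R - 1 = -P/\phi$ with $P$ and $\phi$ coprime and $P$ having only simple zeros by the standing assumption of this paper, the zero sets of $R(f) - 1$ and $R(\mathcal{L}) - 1$ coincide with multiplicities; hence $E_f(S, \infty) = E_\mathcal{L}(S, \infty)$, and a fortiori $E_f(S, t) = E_\mathcal{L}(S, t)$, so (ii) delivers $f = \mathcal{L}$.

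For the substantive direction (i) $\Rightarrow$ (ii), the plan is to prove that the sharing hypothesis together with the degree bounds forces $R(f) = R(\mathcal{L})$; invoking (i) then gives $f = \mathcal{L}$. Set $F = R(f)$ and $G = R(\mathcal{L})$. Since $R - 1 = -P/\phi$ with $P$ simple-zeroed and coprime to $\phi$, the set sharing $E_f(S, t) = E_\mathcal{L}(S, t)$ translates to $F$ and $G$ sharing the value $1$ with weight $t$ in Lahiri's sense; moreover $T(r, F) = n\,T(r, f) + S(r, f)$ and likewise for $G$. The principal tool is the auxiliary function
\[
H = \left(\frac{F''}{F'} - \frac{2F'}{F - 1}\right) - \left(\frac{G''}{G'} - \frac{2G'}{G - 1}\right),
\]
for which $m(r, H) = S(r, f) + S(r, \mathcal{L})$ and whose poles are controlled by the unshared zeros of $F'$ and $G'$ (bounded via the $l$ distinct zeros of $R'$ from (\ref{el3})), the poles of $F, G$ (bounded via the $k$ finite poles of $R$ from (\ref{el2}) together with the single pole of $\mathcal{L}$ at $s = 1$), and the multiple $1$-points escaping the weight-$t$ sharing. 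If $H \not\equiv 0$, the Second Fundamental Theorem applied to $F, G$ with the values $0, 1, \infty$, combined with the $H$-bookkeeping, yields an inequality of the shape
\[
(n - 2l - c)\bigl(T(r, f) + T(r, \mathcal{L})\bigr) \leq S(r, f) + S(r, \mathcal{L})
\]
with $c \in \{5, 6, 11\}$ according as $t \geq 2$, $t = 1$, $t = 0$, contradicting the hypothesis $n \geq 2l + c$; so $H \equiv 0$. Integrating twice produces a Möbius relation $1/(F - 1) = A/(G - 1) + B$ with $A \neq 0$, and a standard four-way case analysis on $(A, B)$ isolates the desired conclusion $F \equiv G$; the non-trivial branches are eliminated using $n \geq 2k + 4$, the single-pole restriction on $\mathcal{L}$, and the presence of at least two distinct finite poles of $R$ when $k \geq 2$.

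The main obstacle will be the $k = 1$ branch of the Möbius analysis, where a single finite pole of $R$ yields Nevanlinna estimates too weak to eliminate non-trivial Möbius transformations on their own. This is precisely the role of hypothesis (b): translating the surviving Möbius relation into a polynomial identity in $f$ and $\mathcal{L}$ and invoking the divisibility condition $b_1 \mid n$ together with the arithmetic bound $n \geq 3b_1 + 1$ in the factorizations of $\phi$ and $z^n$, the argument is reduced to $f = c\mathcal{L}$ for some $n$th root of unity $c$, and then $c = 1$ is forced by the specific $1$-points of $F, G$ lying in $S$. Assembling the cases gives $R(f) = R(\mathcal{L})$, and (i) concludes the proof.
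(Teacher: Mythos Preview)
Your overall architecture matches the paper's: the $H$-function, the contradiction when $H \not\equiv 0$ via the $2l+c$ bounds, and the M\"obius relation $1/(F-1)=A/(G-1)+B$ when $H\equiv 0$. The paper streamlines the M\"obius step by quoting a dichotomy of Yi (Lemma~\ref{l2}): the condition $n\geq 2k+4$ makes $\ol N(r,0;F)+\ol N(r,\infty;F)+\ol N(r,0;G)+\ol N(r,\infty;G)<T(r)$, and under this bound the M\"obius relation collapses to $F\equiv G$ or $FG\equiv 1$. Your proposed four-way case analysis is fine in principle, but it simply re-derives that lemma; the only branch that genuinely needs work afterwards is $FG\equiv 1$.

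Your handling of that branch when $k=1$ is where the proposal breaks. The surviving relation is
\[
\frac{f^{n}\mathcal{L}^{n}}{(f-\beta_1)^{m_1}(\mathcal{L}-\beta_1)^{m_1}}=\text{const},
\]
and this does \emph{not} reduce to $f=c\mathcal{L}$; there is no mechanism here forcing $f/\mathcal{L}$ to be constant, and the ``$c=1$ via $1$-points in $S$'' step never arises. The paper's argument (Lemma~\ref{l04}) is a pure multiplicity--Nevanlinna count: from the identity, a $\beta_1$-point of $\mathcal{L}$ of multiplicity $p$ must be a zero of $f$ of multiplicity $q$ with $m_1p=nq$, and since $b_1=\gcd(m_1,n)$ this forces $p\geq n/b_1$; symmetrically $\ol N(r,\beta_1;f)\leq\tfrac{b_1}{n}T(r,f)$ and $\ol N(r,\infty;f)\leq\tfrac{b_1}{n}T(r,f)$, while $\ol N(r,0;\mathcal{L})=\ol N(r,\infty;f)+\ol N(r,\beta_1;f)$. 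Feeding these into the Second Fundamental Theorem for $\mathcal{L}$ at $0,\beta_1,\infty$ (using $\ol N(r,\infty;\mathcal{L})=O(\log r)$ and $T(r,f)=T(r,\mathcal{L})+O(1)$) gives $T(r,\mathcal{L})\leq\tfrac{3b_1}{n}T(r,\mathcal{L})+O(\log r)$, contradicting $n\geq 3b_1+1$. Replace your final paragraph with this argument and the proof is complete.
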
 
Next we state the following two corollaries with respect to {\em Theorem \ref{t1}} and {\em Theorem \ref{t2}} respectively for meromorphic functions having finitely many poles. %In fact, these corollaries, in the application part would confirm us that {\em Theorem B-F} are nothing but the special cases of these corollaries with significant improvement with respect to weight $0$.   
\begin{cor}\label{c1}
	Let $P(z)$, $p$, $S$, $s$ be as defined in {\em Theorem \ref{t1}}.
	%\begin{enumerate}
	%   \item[(a)] $p\geq 3$, or
	%  \item [(b)] $p=2$ and $gcd(m_i,n)=1$ for at least one of the $m_i$'s such that $n=\sum\limits_{i=1}^{2} m_i \geq 3$, or
	% \item[(c)] $p=2$ and $gcd(m_i,n)\ne 1$ for each $m_i$ such that $n\geq (b_1+b_2)+1$, where $b_i=gcd(m_i,n)$,
	%\end{enumerate}
	Suppose $f$ be a non-constant meromorphic function having finitely many poles and  $\mathcal{L}$
	be a non-constant L-function sharing $(S,t)$.
	% such that $E_{f}(S, 2)=E_{\mathcal{L}}(S,2)$.
	Then for $n \geq \max \left\{2p+1,2s+3\right\}$, when $t\geq2$; $n \geq \max \left\{2p+1,2s+4\right\}$, when $t=1$ and for $n \geq \max \left\{2p+1,2s+7\right\}$, when $t=0$; the following are equivalent :
	\begin{itemize}
		\item[(i)] $P(f)=P(\mathcal{L}) \implies f=\mathcal{L}$ ;
		\item[(ii)] $E_f(S, t)=E_\mathcal{L}(S,t)\implies f=\mathcal{L}$.
		%		\item[(iii)] $P(f)=cP(\mathcal{L}) \implies f=\mathcal{L}$,  where $c$ is a non-zero complex number. 
	\end{itemize}
\end{cor}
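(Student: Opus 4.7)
The plan is to observe that \emph{Corollary \ref{c1}} is essentially what one gets by specializing the proof of \emph{Theorem \ref{t1}} to the sub-class of meromorphic functions $f$ with only finitely many poles, so the argument will be to rerun that proof, tracking precisely where the quantity $\overline{N}(r,f)$ enters and exploiting the fact that it is now a small function. Concretely, since $\mathcal{L}$ is a non-constant L-function, it has finite order (in fact order one), and the shared-set hypothesis $E_f(S,t)=E_\mathcal{L}(S,t)$ together with finiteness of the pole set of $f$ lets one conclude that $f$ too is of finite order by a standard Nevanlinna-theoretic comparison (as is already used in the meromorphic case of \emph{Theorem \ref{t1}}). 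Consequently
\[
\overline{N}(r,f)\;=\;N(r,f)\;=\;O(\log r)\;=\;S(r,f),
\]
and the same bound holds for $S(r,\mathcal{L})$.

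With that in hand, I would repeat the proof of \emph{Theorem \ref{t1}} step-by-step: form the auxiliary function obtained from $P(f)$ and $P(\mathcal{L})$, factor $P(z)-a_0$ as in \eqref{el000} with its $p$ distinct zeros, and bring in the derivative data $P'(z)$ with its $s$ distinct roots as in \eqref{el0000} for the second main theorem estimates. The proof of \emph{Theorem \ref{t1}} must at some stage invoke the second main theorem on $f$ (and symmetrically on $\mathcal{L}$), yielding an inequality of the shape
\[
(n-\text{something})\,T(r,f)\;\leq\;(\text{zeros contribution})\;+\;c_t\,\overline{N}(r,f)\;+\;S(r,f),
\]
where the coefficient $c_t$ depends on the sharing weight (larger for $t=0$ than for $t\geq 2$). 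Once $\overline{N}(r,f)=S(r,f)$, the term $c_t\,\overline{N}(r,f)$ is absorbed into $S(r,f)$, and the resulting degree condition on $n$ loosens by exactly $c_t$ units on the pole side and by the one unit corresponding to the removal of $\overline{N}(r,f)$ from the zero-multiplicity bookkeeping associated with $P(f)$ itself. This is how one should recover the drops $2p+2\to 2p+1$, and $2s+5,\,2s+6,\,2s+11\to 2s+3,\,2s+4,\,2s+7$ for $t\geq 2,\,t=1,\,t=0$ respectively.

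The main obstacle will be the bookkeeping in the $t=0$ case, where the drop is from $2s+11$ to $2s+7$; this decrease of $4$ must be accounted for by absorbing the coefficient on $\overline{N}(r,f)$ arising from Lahiri-type weighted sharing estimates (typically a combination such as $\overline{N}(r,f)+N_{1)}(r,f)+\overline{N}_{(2}(r,f)$ that collapses once poles are small). One must verify that every instance of pole-counting of $f$ inside the proof of \emph{Theorem \ref{t1}} — including those hidden inside the auxiliary functions built from $R(z)$ in \eqref{el2} — genuinely becomes $S(r,f)$, and not just the obvious $\overline{N}(r,f)$ term. Finally, the equivalence of (i) and (ii) in the corollary is inherited from the corresponding equivalence in \emph{Theorem \ref{t1}} since the reduction $P(f)=P(\mathcal{L})\Rightarrow f=\mathcal{L}$ is a purely algebraic statement independent of the pole structure of $f$.
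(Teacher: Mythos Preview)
Your proposal is correct and follows exactly the paper's own approach: the authors simply state that the proof of the corollary follows from that of \emph{Theorem \ref{t1}} once one uses $\overline{N}(r,\infty;f)=O(\log r)$, and omit the details. Your plan to rerun the argument and absorb every occurrence of $\overline{N}(r,\infty;f)$ into the error term is precisely what is needed, and your accounting for the drops in the thresholds is accurate (one unit off the $2p$ bound from Lemma~\ref{l2}, and two units off the main term $(2s+4)\to(2s+2)$ in Lemma~\ref{l11} together with the reduction $\tfrac{3(2-t)}{t+1}\to\tfrac{2(2-t)}{t+1}$ in the $\overline{N}_*$ estimate, which gives the $2,2,4$ drops for $t\geq2,\,t=1,\,t=0$). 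One small slip: the auxiliary function $R(z)$ of \eqref{el2} belongs to \emph{Theorem \ref{t2}}, not \emph{Theorem \ref{t1}}; for \emph{Corollary \ref{c1}} you only need to track $\mathcal{F},\mathcal{G}$ as in \eqref{el04}.
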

%Following theorem is the second main result of the paper.
\begin{cor}\label{c2}
	Let $R(z)$, $k$,  $S, l$ be as defined in {\em Theorem \ref{t2}} with \begin{enumerate}
		\item[(a)] $k\geq2$, or
		\item[(b)] $k=1$ and  $n\geq 2b_1+1$, where $b_1=gcd(m_1,n)$.
	\end{enumerate} 
	Let $f$ be a non-constant meromorphic function having finitely many poles and  $\mathcal{L}$
	be a non-constant L-function sharing $(S,t)$.
	Then for $n \geq \max \left\{2k+3,2l+3\right\}$, when $t\geq2$; $n \geq \max \left\{2k+3,2l+4\right\}$, when $t=1$ and for $n \geq \max \left\{2k+3,2l+7\right\}$, when $t=0$; the following are equivalent :
	\begin{itemize}
		\item[(i)] $R(f)=R(\mathcal{L}) \implies f=\mathcal{L}$ ;
		\item[(ii)] $E_f(S, t)=E_\mathcal{L}(S,t)\implies f=\mathcal{L}$.	\end{itemize}	
	
\end{cor}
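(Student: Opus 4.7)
The plan is to obtain the corollary by refining the argument of Theorem \ref{t2} to exploit the finiteness of poles. Since $f$ has only finitely many poles and, by the analytic continuation axiom, $\mathcal{L}$ has at most a single pole at $s=1$, both $N(r,f)$ and $N(r,\mathcal{L})$ are $O(\log r)$ and hence absorbed into $S(r,f)$ and $S(r,\mathcal{L})$. Setting $F := R(f)$ and $G := R(\mathcal{L})$, this also forces $N(r,F) = k\,N(r,f) + O(\log r) = S(r,F)$ and analogously $N(r,G) = S(r,G)$, which is the key quantitative gain over Theorem \ref{t2}.

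The direction (ii) $\implies$ (i) is straightforward: if $R(f) = R(\mathcal{L})$, then $R(f)-1 = R(\mathcal{L})-1$ together with (\ref{el2})--(\ref{el02}) gives $P(f)/\phi(f) = P(\mathcal{L})/\phi(\mathcal{L})$; since $P$ and $\phi$ share no common zero, comparing zero multiplicities yields $E_f(S,\infty) = E_\mathcal{L}(S,\infty)$, hence $E_f(S,t) = E_\mathcal{L}(S,t)$ for every $t \ge 0$, and (ii) concludes $f = \mathcal{L}$.

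The main content is (i) $\implies$ (ii). Assuming $E_f(S,t) = E_\mathcal{L}(S,t)$, it suffices to prove $F \equiv G$, for then (i) gives $f = \mathcal{L}$. The sharing hypothesis on $S$ re-expresses as $F$ and $G$ sharing the value $1$ with weight $t$. Following the standard Lahiri-type approach, introduce
\begin{equation*}
H = \left(\frac{F''}{F'} - \frac{2F'}{F-1}\right) - \left(\frac{G''}{G'} - \frac{2G'}{G-1}\right),
\end{equation*}
and split into the cases $H \not\equiv 0$ and $H \equiv 0$. In the case $H \not\equiv 0$, applying the second main theorem to $F$ and $G$ together with the weighted-sharing lemmas produces an inequality whose right-hand side involves $k$ (from the poles $\beta_j$ of $R$), $l$ (from the zeros $\delta_j$ of $R'$), and also the pole counts $N(r,f)$, $N(r,\mathcal{L})$. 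Under the present hypothesis every such pole contribution collapses into $S(r,f) + S(r,\mathcal{L})$; retracing the Theorem \ref{t2} estimates with these deletions drops the $2k$-threshold by $1$ and the $2l$-threshold by $2,2,4$ in the respective weight cases $t \geq 2$, $t=1$, $t=0$, giving precisely the advertised bounds $2k+3$ and $2l+3$, $2l+4$, $2l+7$. The hypothesis on $n$ then contradicts the inequality, forcing $H \equiv 0$.

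When $H \equiv 0$, integration produces a Möbius relation $F = (aG + b)/(cG + d)$ with constant $a,b,c,d$. Substituting back into the definition of $R$ and analyzing the pole and zero structure using case (a) with $k \geq 2$, or case (b) with $k=1$ and $n \geq 2b_1 + 1$, rules out all non-trivial possibilities and forces $F \equiv G$. In Theorem \ref{t2}(b) the stronger requirement $n \geq 3b_1 + 1$ arose because a term of size roughly $b_1\, N(r,f)$ had to be dominated by $n$; removing that term via the finite-pole hypothesis reduces the requirement by $b_1$, yielding $n \geq 2b_1 + 1$. I expect the principal obstacle to be the case $H \equiv 0$ under (b): the single pole of $R$ offers several Möbius subcases that each need to be excluded by a careful comparison of zero and pole multiplicities of $F$ and $G$, all the while keeping track of which estimates actually use the finiteness of poles versus which are purely algebraic.
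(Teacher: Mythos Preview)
Your approach is essentially identical to the paper's: the authors simply state that the corollary follows by rerunning the proof of Theorem \ref{t2} with $\ol N(r,\infty;f)=O(\log r)$, and your outline does exactly this, correctly tracking how that single change lowers the $l$-threshold by $2,2,4$ (for $t\ge 2,1,0$), the $k$-threshold by $1$, and the $b_1$-threshold by $b_1$. One caution: your first-paragraph claim that $N(r,F)=k\,N(r,f)+O(\log r)=S(r,F)$ is false, since the poles of $F=R(f)$ at the $\beta_j$-points of $f$ are \emph{not} controlled by the finite-pole hypothesis on $f$; fortunately you never actually use that statement, and the correct (and sufficient) input is just $\ol N(r,\infty;f)=O(\log r)$, which is what you invoke in the $H\not\equiv 0$ and $H\equiv 0$ analyses.
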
   
\section{Applications}
In this section, we exhibit different examples applying our main theorems and corollaries which clearly shows the broad perspectives of our results with special attention to the fact that the results  {\em Theorem B-F} comes under the special cases of our results. In addition, our results significantly improve {\em Theorem B-F} in case of weight `$0$' of the shared set. Below we explain this fact via the following two examples.
%   \\\\generalize the existing results in such a way that all the variants of polynomials can be accumulated  under single shed.
\begin{exm}\label{ex2}
	Consider the polynomial 
	\be\nonumber
	P(z)=z^n+az^{n-m}+b= z^{n-m}(z^{m}+a)+b,
	\ee
	where $n, m$  are relatively prime inegers with $n-m>1$and a, b are non-zero constants such that the polynomial has no multiple zero. Suppose $S=\{z:P(z)=0\}$.
	Here $$p=m+1\geq 2 \;\ \text{and }\;\ gcd(n,n-m)=1.$$Also \be\nonumber
	P'(z)=z^{n-m-1}(nz^{m}+a(n-m));
	\ee
	i.e., $$s=m+1.$$

	Suppose that $P(f)=P(\mathcal{L})$, for any non-constant meromorphic function f and a non-constant L-function $\mathcal{L}$,  then we have
	\be\label{411}
	f^n-\mathcal{L}^n=-a(f^{n-m}-\mathcal{L}^{n-m}).
	\ee
	If $f^n \not\equiv \mathcal{L}^n$, then we can rewrite (\ref{411}) as
	\be\label{412}
	\mathcal{L}^{m}=-a\frac{(h-v)(h-v^2)...(h-v^{{n-m}-1})}{(h-u)(h-u^2)...(h-u^{n-1})},
	\ee
	where $h =\displaystyle\frac{f}{\mathcal{L}}$, $u = exp(2\pi i/n)$ and $v = exp(2\pi i/(n-m))$. Noting that n and (n-m) are relatively prime positive
	integers, then the numerator and denominator of (\ref{412}) have no common factors. Since $\mathcal{L}$ has atmost one pole at $z = 1$ in
	the complex plane, and whenever $n\geq 5$ we can see that there exists at least three distinct roots of $h^n = 1$ such that they are Picard
	exceptional values of h$,$ and so it follows by (\ref{412}) that h and thus $\mathcal{L}$ are constants, which is impossible.
	
	Therefore, we must have $f^n = \mathcal{L}^n$. Then by (\ref{411}) we also have $f^{n-m} = \mathcal{L}^{n-m}$. Since n and (n-m) are relatively prime positive integers, we deduce that $f = \mathcal{L}$. Thus we see that $P(f)=P(\mathcal{L})\implies f=\mathcal{L}$, when $n\geq 5$. 
	
	Now we apply {\em\bf Theorem \ref{t1}} to find the minimum value of $n$ for which we can say that  $E_f(S, t)=E_\mathcal{L}(S,t)\implies f=\mathcal{L}$.
	
	Therefore,  $E_f(S, t)=E_\mathcal{L}(S,t)\implies f=\mathcal{L}$ for  
	\begin{enumerate}
		\item $n \geq \max\{ 2m+4,2m+7\}=2m+7$ when $t\geq 2 $,
		\item  $n \geq \max\{ 2m+4,2m+8\}=2m+8$ when $t=1 $, and
		\item  $n \geq \max\{ 2m+4,2m+13\}=2m+13$ when $t=0 $.
	\end{enumerate}
	
	Again if $f$ is a non-constant meromorphic function with finitely many poles and $\mathcal{L}$ is a non-constant $L$ function, then by {\em\bf Corollary \ref{c1}} we have $E_f(S, t)=E_\mathcal{L}(S,t)\implies f=\mathcal{L}$ for  
	\begin{enumerate}
		\item $n \geq \max\{ 2m+3,2m+5\}=2m+5$ when $t\geq 2 $,
		\item  $n \geq \max\{ 2m+3,2m+6\}=2m+6$ when $t=1 $, and
		\item  $n \geq \max\{ 2m+3,2m+9\}=2m+9$ when $t=0 $.
	\end{enumerate} 
\end{exm}
\begin{exm}\label{ex5}
	Consider the polynomial \bea\nonumber  P(z) = z^n + az^m + b,\eea where $m$ and $n$ are positive integers such that $n \geq m+4$, $a$ and $b$ are finite non-zero complex numbers with $\frac{b^{n-m}}{a^m}\ne \frac{(-1)^nm^m(n-m)^{n-m}}{n^n}$. Then $P(z)$ has only simple zeros. Let $S$ denotes the set of zeros of $P(z)$. Suppose \bea\label{eyi} 
	R(z)=- \frac{z^n}{az^m+b}.\eea
	Then we find that $S=\{z:R(z)-1=0\}$.
	From (\ref{eyi}) we have \bea \nonumber
	R'(z)=- \frac{z^{n-1}[a(n-m)z^m+bn]}{(az^m+b)^2}.\eea 
	Now for a non-constant meromorphic function $f$ and a non-constant L-function $\mathcal{L}$ consider $R(f)=R(\mathcal{L})$. Then we have  \bea\label{e49} 
	\frac{f^n}{af^m+b}&=&\frac{\mathcal{L}^n}{a\mathcal{L}^m+b}\nonumber\\
	\implies a(f^n\mathcal{L}^m-f^m\mathcal{L}^n)-b(\mathcal{L}^n-f^n)&=&0.\eea
	Let   $h=\frac{f}{\mathcal{L}}$. Suppose that $h $ is a non-constant meromorphic function. Then from (\ref{e49}) we have
	\bea \label{e111}&&
	ah^{m}\mathcal{L}^{n+m}(h^{n-m}-1)+b\mathcal{L}^n(h^n-1)=0\\\nonumber&&
	\implies \mathcal{L}^m=-\frac{b(h^n-1)}{ah^{m}(h^{n-m}-1)}=-\frac{b}{a}\frac{(h-u)(h-u^2)...(h-u^{n-1})}{h^m(h-v)(h-v^2)...(h-v^{n-m-1})},
	\eea
	where  $u = exp(2\pi i/n)$, and $v = exp(2\pi i/(n-m))$. Since $n$ and $m$ are co-prime, so is $n$ and $n-m$. Hence the numerator and denominator of (\ref{e111}) have no common factors. Further, the function $\mathcal{L}$ has atmost one pole  in
	the complex plane, it follows that $h$ has atleast $n-m-1$ picard exceptional values among $\{0,v,v^2,...,v^{n-m-1}\}$.
	
	Clearly this is a contradiction as $n\geq m+4$. Hence  $h$ is  constant. Thus from (\ref{e111}) we must have  $h^{n}=1=h^{n-m}$ which in turn implies $h=1$; i.e., $f=\mathcal{L}$. \vspace{0.1in}\par Now we count the cardinality of the set $S$ for which  $E_f(S, t)=E_\mathcal{L}(S,t)\implies f=\mathcal{L}$. In this case, for $R(z)$ we have  $$l=m+1,\;\; k=m.$$  Therefore we obtain that $R(z)$ satisfies condition (a)[when $m\geq2$], or (b)[when m=1] and (i)  of
	{\em\bf Theorem \ref{t2}}. Hence  $E_f(S, t)=E_\mathcal{L}(S,t)\implies f=\mathcal{L}$ for \begin{enumerate}
		\item $n\geq \max\{2m+4,2m+7\}=2m+7$ when $t\geq 2$ ,
		\item $n\geq \max\{2m+4,2m+8\}=2m+8$ when $t= 1$, and 
		\item $n\geq \max\{2m+4,2m+13\}=2m+13$ when $t=0$.
	\end{enumerate}
	Now let $f$ be a non-constant meromorphic function with finitely many poles and $\mathcal{L}$ be a non-constant $L$ function. Then by {\em\bf Corollary \ref{c2}} we have $E_f(S, t)=E_\mathcal{L}(S,t)\implies f=\mathcal{L}$ for  
	\begin{enumerate}
		\item $n \geq \max\{ 2m+3,2m+5\}=2m+5$ when $t\geq 2 $,
		\item  $n \geq \max\{ 2m+3,2m+6\}=2m+6$ when $t=1 $, and
		\item  $n \geq \max\{ 2m+3,2m+9\}=2m+9$ when $t=0 $.
	\end{enumerate} 		
\end{exm} 	
\begin{rem}
	Observe that the conclusions obtained in {\em Example \ref{ex2}} and {\em Example \ref{ex5}} for meromorphic functions having finitely many poles are at par with all the existing results from {\em Theorem B-F} with an improvement in the case of weight ` $0$' $($ i.e., the cardinality is reduced from $2m+11$ to $2m+9)$.
\end{rem}
Another point we would like to explore on this occasion that the polynomials used in {\em Theorem B-F} are all critically injective \cite[see Lemma 4]{bk}. For the sake of our readers, we recall the definition of this particular type of polynomials. 
\begin{defi}
	Let $P(z)$ be a polynomial such that $P^{\prime}(z)$ has mutually $r$ distinct  zeros given by $d_{1}, d_{2}, \ldots, d_{r}$ with multiplicities $q_{1}, q_{2}, \ldots, q_{r}$ respectively. Then $P(z)$ is said to be a critically injective polynomial if $P(d_i)\not =P(d_j)$ for $i\not=j$, where $i,j\in \{1,2,\cdot\cdot\cdot,r\}$.
	\par 	Any polynomial which is not critically injective is called a non-critically injective polynomial.
\end{defi}
Obviously, any polynomial is either critically injective or non-critically injective. So, taking this fact into account, we have framed our theorems in such a way that the non-critically injective polynomials can also be fit into our theorems.  Even, the polynomials which are still unknown to be critically injective or non-critically injective also fit into our theorems. That is our results count all kinds of polynomials answering {\em Question \ref{q1}-\ref{q12}} affirmatively. Below we provide examples of each these three types of polynomials respectively, justifying all the above claims.	
\begin{exm}
	Let us consider the  polynomial :
	\bea\label{e44.0} P(z)=z^n+az^{n-m}+bz^{n-2m}+c,\eea
	where $a, b, c\in\mathbb{C}^*$ be such that $P(z)$ has no multiple root, $gcd(m, n) = 1$ with $n\geq2m+4$ and $\frac{a^2}{4b}=\frac{n(n-2m)}{(n-m)^2}$, $c\ne \frac{\beta_i\beta_j}{\beta_i+\beta_j}$. Here $\beta_i=-(c_i^n+ac_i^{n-m}+bc_i^{n-2m}),$ where $c_i$ are the roots
	of the equation $nz^{2m} + (n- m)az^m + b(n-2m) = 0$, for $i = 1, 2, \ldots, 2m$. 
	Suppose $S$ denotes the set of zeros of (\ref{e44.0}).
	\vspace{0.1in}\par  Obviously,  $P(z)$ has only simple zeroes and it is critically injective \cite[see Lemma 2.7]{3a}. 
	From (\ref{e44.0}) we have \bea\label{e440} P'(z)&=&z^{n-2m-1}[nz^{2m}+a(n-m)z^m+b(n-2m)]\\\nonumber&&=nz^{n-2m-1}\left(z^m+\frac{a(n-m)}{2n}\right)^2.\eea
	From (\ref{e44.0}) and (\ref{e440}) we find that $$p=2m+1 \;\;\; \mbox{and} \;\;\;s=m+1.$$
	In \cite[see proof of Theorem 1.1]{3a}, it is also proved that $P(f)=P(g)$ implies $f=g$ for $n\geq 2m+4$, where $f$and $g$ are non-constant meromorphic functions. Hence for a non-constant meromorphic function $f$ and an L-function  $\mathcal{L}$ we have $P(f)=P(\mathcal{L})\implies f=\mathcal{L}$ when $n\geq 2m+4$. Thus $P(z)$ satisfies the condition (i) of {\em\bf Theorem \ref{t1}} of the present paper and  hence $E_f(S, t)=E_{\mathcal{L}}(S,t)\implies f=\mathcal{L}$ for
	\begin{enumerate}
		\item $n\geq\max\{4m+4,2m+7\}\geq 9$ when $t \geq 2$,
		\item  $n\geq\max\{4m+4,2m+8\}\geq 10$ when $t = 1$, and
		\item  $n\geq\max\{4m+4,2m+13\}\geq 15$ when $t = 0$.
	\end{enumerate}
	Again if $f$ is a non-constant meromorphic function with finitely many poles and $\mathcal{L}$ be a non-constant $L$ function, then by {\em\bf Corollary \ref{c1}} we have $E_f(S, t)=E_\mathcal{L}(S,t)\implies f=\mathcal{L}$ for  
	\begin{enumerate}
		\item $n \geq \max\{ 4m+3,2m+5\}=2m+5 \geq 7$ when $t\geq 2 $,
		\item  $n \geq \max\{ 4m+3,2m+6\}=2m+6 \geq 8$ when $t=1 $, and
		\item  $n \geq \max\{ 4m+3,2m+9\}=2m+9 \geq 11$ when $t=0 $.
	\end{enumerate} 
	
\end{exm}

\begin{rem}
	Note that the polynomial \be\label{efr}P(z)=\frac{(n-1)(n-2)}{2}z^n-n(n-2)z^{n-1}+\frac{n(n-1)}{2}z^{n-2}-c,\ee
	where $n\geq6,$ $c\neq0,1$, introduced by Frank-Reinders \cite{frank1998unique} comes as the special case of (\ref{e44.0}) for $m=1$, $a=\frac{-2n}{n-1}$, $b=\frac{n}{n-2}$ and $c\in\mathbb{C}-\{0,\frac{-1}{(n-1)(n-2)}\}$. Hence $E_f(S, t)=E_{\mathcal{L}}(S,t)\implies f=\mathcal{L}$ as $n\geq 9$ when $t \geq 2$, $n\geq 10$ when $t =1$ and $n\geq 15$ when $t = 0$, where $S$ denotes the set of zeros of (\ref{efr}) and $f$, $\mathcal{L}$ are non-constant meromorphic function  and a non-constant L-function respectively.
	%	\end{equation*} 	where $c\ne 0,1$ and $n\geq6$..
\end{rem}
%	In the next example we explore a non-critically injective polynomial in the direction of Theorem 	1.1.
\begin{exm}
Let

\begin{equation*}
	P(z) = z^n-4z^{n-1}+4z^{n-2}+c,
\end{equation*}
where $n(\geq 5)$ is odd,
$c\in \mathbb{C}$ such that $P(z)$ does not have any multiple zero. Also we have 
\bea\nonumber P'(z)=z^{n-3}\left(nz^2-4(n-1)z+4(n-2)\right).\eea
Here $P(z)$ is a non-critically injective polynomial \cite[see  (1.1)]{smtbilisi} and we see that $$p=2,\;\; s=3.$$ Suppose $S=\{z:P(z)=0\}$. 
Let $f$ and $\mathcal{L}$ be two non-constant meromorphic  and L-function respectively such that $$P(f) = P(\mathcal{L}).$$ Since $\mathcal{L}$ has at most one pole in $\mathbb{C}$, hence proceeding in the same line of proof as done in {\em Case-3 of Theorem 1.3} of \cite{smtbilisi} or {\em Example 4.4} of \cite{smfil21}  we also get here $f=\mathcal{L}$. 

Therefore $P(z)$ satisfies condition (i) of {\em\bf Theorem \ref{t1}}. Hence we conclude that $E_f(S, t)=E_\mathcal{L}(S,t)\implies f=\mathcal{L}$ when  
\begin{enumerate}
	\item $n\geq \max\{2.2+2,2.3+5\}=11$ for $t\geq 2$, 
	\item $n\geq \max\{2.2+2,2.3+6\}=12$ for $t=1$, and
	\item $n\geq \max\{2.2+2,2.3+11\}=17$ for $t=0$.
\end{enumerate}

Now let  $f$ be a non-constant meromorphic function with finitely many poles and $\mathcal{L}$ be a non-constant $L$ function. Then by {\em\bf Corollary \ref{c1}} we have $E_f(S, t)=E_\mathcal{L}(S,t)\implies f=\mathcal{L}$ for  
\begin{enumerate}
	\item $n \geq \max\{ 2.2+1,2.3+3\}=9$ when $t\geq 2 $,
	\item  $n \geq \max\{ 2.2+1,2.3+4\}=10$ when $t=1 $, and
	\item  $n \geq \max\{ 2.2+1,2.3+7\}=13$ when $t=0 $.
\end{enumerate} 
\end{exm}	 

\begin{exm}
Consider the polynomial 
\bea P(z)=az^n-n(n-1)z^2+2n(n-2)bz -(n-1)(n-2)b^2,\nonumber\eea
where $n(\geq6)$ is an integer and $a$, $b$ are two non-zero complex numbers satisfying
$ab^{n-2}\ne 1, 2$. In \cite[see section 5]{bl}, we can find that this polynomial is still uncertain to be critically injective or non-critically injective. Suppose $S=\{z:P(z)=0\}$.  It is obvious that $n(n-1)z^2-2n(n-2)bz +(n-1)(n-2)b^2=0;$ has two
distinct roots, say $\alpha_1$ and $\alpha_2$. 
Here  \bea\label{e41} R(z)=\frac{az^n}{n(n-1)(z-\alpha_1)(z-\alpha_2)}. \eea
Hence $S=\{z:R(z)-1=0\}$.
From (\ref{e41}) we have
\bea\nonumber R'(z)=\frac{(n-2)az^{n-1}(z-b)^2}{n(n-1)(z-\alpha_1)^2(z-\alpha_2)^2}.\eea
Let $f$  a non-constnat meromorphic function  and $\mathcal{L}$ be a non-constant L-function. Since every L-function is meromorphic in $\mathbb{C}$, so  $R(f)=R(\mathcal{L})\implies f=\mathcal{L}$ for $n\geq6$ directly follows from \cite[see page 67]{aljahari}.
We also find that in this case $$l=2,\;\;k=2. $$ Since $P(z)$ satisfies condition (a) and (i) of  {\em\bf Theorem \ref{t2}}. Hence we obtain $E_f(S, t)=E_\mathcal{L}(S,t)\implies f=\mathcal{L}$ for \begin{enumerate}
	\item $n\geq \max\{2k+4,2l+5\}=9$ when $t\geq2$,
	\item $n\geq \max\{2k+4,2l+6\}=10$ when $t=1$, and 
	\item for $n\geq \max\{2k+4,2l+11\}=15$ when $t=0$.
\end{enumerate}

Again if $f$ is a non-constant meromorphic function with finitely many poles and $\mathcal{L}$ is a non-constant L-function, then by {\em\bf Corollary \ref{c2}} we have $E_f(S, t)=E_\mathcal{L}(S,t)\implies f=\mathcal{L}$ for  
\begin{enumerate}
	\item $n \geq \max\{ 2k+3,2l+3\}=7$ when $t\geq 2 $,
	\item  $n \geq \max\{ 2k+3,2l+4\}=8$ when $t=1 $, and
	\item  $n \geq \max\{ 2k+3,2l+7\}=11$ when $t=0 $.
\end{enumerate} 
\end{exm}
\section{Lemmas}
In this section we prove some lemmas which will be required for the proofs of our main results. Before stating these lemmas we shortly recall the following definitions for the convenience of our readers. %For standard definitions and notations we have already suggested our readers to follow \cite{wkh}. Furthermore, for the convenience of our readers, we explain the following notations which will be required in the proofs of the lemmas and main results. % for the sake of our convenience.
\begin{defi} \cite{15} Let $f$ and $g$ be two non-constant meromorphic functions such that $f$ and $g$ share $(1,0)$. Let $z_{0}$ be a $1$-point of $f$ with multiplicity $p$, a $1$-point of $g$ with multiplicity $q$. We denote by $\ol N_{L}(r,1;f)$ the reduced counting function of those $1$-points of $f$ and $g$ where $p>q$, by $N^{1)}_{E}(r,1;f)$ the counting function of those $1$-points of $f$ and $g$ where $p=q=1$. In the same way we can define $\ol N_{L}(r,1;g)$, $N^{1)}_{E}(r,1;g)$. In a similar manner we can define $\ol N_{L}(r,a;f)$ and $\ol N_{L}(r,a;g)$ for $a\in\mathbb{\ol C}$. \end{defi}
\begin{defi}\cite{lahiriNagoya, Lahiri CVEE} Let $f$, $g$ share $(a,0)$. We denote by $\ol N_{*}(r,a;f,g)$ the reduced counting function of those $a$-points of $f$ whose multiplicities differ from the multiplicities of the corresponding $a$-points of $g$. 
\end{defi}
Clearly $$\ol N_{*}(r,a;f,g)\equiv\ol N_{*}(r,a;g,f)\;\;\mbox{and}\;\;\ol N_{*}(r,a;f,g)=\ol N_{L}(r,a;f)+\ol N_{L}(r,a;g).$$ 

\begin{defi} \cite{Lahiri CVEE} For $a\in\mathbb{C}\cup\{\infty\}$ we denote by $N(r,a;f\vline=1)$ the counting function of simple $a$ points of $f$. For a positive integer $m$ we denote by $N(r,a;f\vline\leq m) (N(r,a;f\vline\geq m))$ the counting function of those $a$ points of $f$ whose multiplicities are not greater(less) than $m$ where each $a$ point is counted according to its multiplicity.

$\ol N(r,a;f\vline\leq m)\; (\ol N(r,a;f\vline\geq m))$ are defined similarly, where in counting the $a$-points of $f$ we ignore the multiplicities.

Also $N(r,a;f\vline <m),\; N(r,a;f\vline >m),\; \ol N(r,a;f\vline <m)\; and\; \ol N(r,a;f\vline >m)$ are defined analogously.  \end{defi}
\begin{defi}\cite{ab1} Let $a,b_{1},b_{2},\ldots,b_{q} \in\mathbb{C}\;\cup\{\infty\}$. We denote by $N(r,a;f\vline\; g\neq b_{1},b_{2},\ldots,b_{q})$ the counting function of those $a$-points of $f$, counted according to multiplicity, which are not the $b_{i}$-points of $g$ for $i=1,2,\ldots,q$. \end{defi}
For  two non-constant meromorphic functions $F$ and $G$, 
set\be{\label{e2.2}}H=\left(\frac{F^{''}}{F^{'}}-\frac{2F^{'}}{F-1}\right)-\left(\frac{G^{''}}{G^{'}}-\frac{2G^{'}}{G-1}\right).\ee

\begin{lem}\label{2.1}\cite{15}
Let $F$, $G$  share $(1,0)$ and $H\not\equiv 0$. Then $$N_{E}^{1)}(r,1;F)=N_{E}^{1)}(r,1;G)\leq N(r,H)+S(r,F)+S(r,G).$$
\end{lem}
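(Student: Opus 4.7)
The plan is to show that every simple common $1$-point of $F$ and $G$ is a zero of $H$, and then to bound $N(r,0;H)$ via Nevanlinna's first main theorem together with the lemma on the logarithmic derivative. The equality $N_{E}^{1)}(r,1;F)=N_{E}^{1)}(r,1;G)$ is immediate, since the defining condition ``$z_0$ is a $1$-point of $F$ of multiplicity $1$ and also a $1$-point of $G$ of multiplicity $1$'' is symmetric in $F$ and $G$; only the inequality requires work.

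For the inequality, I would fix a simple common $1$-point $z_0$ and write the local Taylor expansions $F(z)=1+a(z-z_0)+c(z-z_0)^2+\cdots$ with $a\neq 0$, and $G(z)=1+b(z-z_0)+d(z-z_0)^2+\cdots$ with $b\neq 0$. A short direct computation yields
\[
\frac{F''(z)}{F'(z)}-\frac{2F'(z)}{F(z)-1} \;=\; -\frac{2}{z-z_0}+O(z-z_0),
\]
so that the principal part is $-2/(z-z_0)$ and the constant coefficient is exactly $0$ (the contributions $2c/a$ from $F''/F'$ and $-2c/a$ from the Laurent expansion of $-2F'/(F-1)$ cancel). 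An identical computation applies to $G$. Subtracting, the pole parts cancel and the (already vanishing) constant terms cancel as well, so $H$ is analytic at $z_0$ with $H(z_0)=0$. Counting these zeros gives
\[
N_{E}^{1)}(r,1;F)\;\leq\; N(r,0;H).
\]

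The final step is to estimate $N(r,0;H)$. By the first fundamental theorem, $N(r,0;H)\leq T(r,H)+O(1)=m(r,H)+N(r,H)+O(1)$. Since $H$ is a linear combination of the logarithmic derivatives $F''/F'$, $F'/(F-1)$, $G''/G'$ and $G'/(G-1)$, the lemma on the logarithmic derivative (applied once to $F$ and once to $G$) gives $m(r,H)=S(r,F)+S(r,G)$. Combining the two displays yields the required bound.

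The only delicate ingredient is the local computation: one must verify that both $\frac{F''}{F'}-\frac{2F'}{F-1}$ and its $G$-counterpart produce the \emph{same} simple-pole residue ($-2$) and the \emph{same} constant term ($0$) at any simple common $1$-point, so that the difference vanishes rather than merely being bounded. Everything else is standard Nevanlinna bookkeeping, and no hypothesis on the sharing weight beyond $(1,0)$ is needed because the computation used only that the multiplicities of $F-1$ and $G-1$ at $z_0$ are both equal to $1$.
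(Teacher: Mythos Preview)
The paper does not supply its own proof of this lemma; it is simply quoted from Yi \cite{15}. Your argument is exactly the standard one underlying that citation: verify via the local Taylor expansion that at every simple common $1$-point the two Schwarzian-type expressions have identical principal part $-2/(z-z_0)$ and vanishing constant term, so that $H$ is holomorphic with $H(z_0)=0$ there; then pass from $N(r,0;H)$ to $N(r,H)$ via the first fundamental theorem and the logarithmic-derivative lemma. Both the local computation and the Nevanlinna bookkeeping are correct, so your proposal is fine and coincides with the intended (cited) proof.
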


\begin{lem}\label{102}
Let $f$ be a non-constant meromorphic function and $\mathcal{L}$ be an non-constant L-function sharing a set $S$ IM, where $|S|\geq3$. Then $\rho(f)=\rho(\mathcal{L})=1$. Furthermore, $ S(r,f)=O(\log r)=S(r,\mathcal{L}).$ 
\end{lem}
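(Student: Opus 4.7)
The plan is to bootstrap from the known order of L-functions in the Selberg class by applying Nevanlinna's Second Fundamental Theorem in both directions. The starting point on the $\mathcal{L}$ side is the classical asymptotic (see Steuding \cite{steuding2007value})
\[
T(r,\mathcal{L})=\frac{d_{\mathcal{L}}}{\pi}\,r\log r+O(r),
\]
where $d_{\mathcal{L}}$ is the degree of $\mathcal{L}$; in particular $\rho(\mathcal{L})=1$ and $S(r,\mathcal{L})=O(\log r)$.

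Write $S=\{a_1,\ldots,a_q\}$ with $q\geq 3$. The IM-sharing hypothesis $\ol E_f(S)=\ol E_{\mathcal{L}}(S)$ says that the set of points at which $f$ takes a value in $S$ coincides with the set of points at which $\mathcal{L}$ takes a value in $S$; summing reduced counting functions over $S$ therefore yields the identity
\[
\sum_{i=1}^{q}\ol N(r,a_i;f)=\sum_{i=1}^{q}\ol N(r,a_i;\mathcal{L}).
\]
The key step is to apply the Second Fundamental Theorem to $f$ at the $q\geq 3$ targets $a_1,\ldots,a_q$, combined with the trivial bound $\ol N(r,a_i;\mathcal{L})\leq T(r,\mathcal{L})+O(1)$, giving
\[
(q-2)T(r,f)\leq \sum_{i=1}^{q}\ol N(r,a_i;f)+S(r,f)\leq q\,T(r,\mathcal{L})+S(r,f).
\]
Since $q-2\geq 1$, this forces $T(r,f)=O(T(r,\mathcal{L}))$ outside an exceptional set of finite linear measure, so $\rho(f)\leq\rho(\mathcal{L})=1$. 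The reverse inequality $\rho(\mathcal{L})\leq\rho(f)$ follows by swapping the roles of $f$ and $\mathcal{L}$ in the same argument, giving $\rho(f)=\rho(\mathcal{L})=1$.

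Once $f$ is known to be of finite order, the convention for $S(r,\cdot)$ recalled in the Introduction immediately produces $S(r,f)=O(\log r)$, and the same bound for $S(r,\mathcal{L})$ was already noted above; this closes the \emph{furthermore} clause. The main non-routine ingredient is the classical fact $\rho(\mathcal{L})=1$ for L-functions in the Selberg class; everything else reduces to a standard Second Fundamental Theorem computation combined with the set-sharing identity, and I do not anticipate any hidden technical difficulty beyond routinely absorbing the exceptional set into the error term.
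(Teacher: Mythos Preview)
Your argument is correct and is precisely the standard Second Fundamental Theorem comparison that the paper has in mind: the paper omits the details and simply refers to the proof of Theorem~5 in \cite{yuan2018}, which proceeds exactly as you outline (Steuding's asymptotic gives $\rho(\mathcal{L})=1$, set-sharing gives $\sum_i\ol N(r,a_i;f)=\sum_i\ol N(r,a_i;\mathcal{L})$, and the Second Fundamental Theorem applied to each function in turn yields $T(r,f)\asymp T(r,\mathcal{L})$ outside a set of finite measure). The finite-order conclusion $S(r,f)=O(\log r)$ then follows from the convention stated in the Introduction, just as you say.
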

\begin{proof}
Proceeding in a similar method as done in the proof of Theorem 5, \cite[see p. 6]{yuan2018}, we would obtain $\rho(f)=\rho(\mathcal{L})=1$. So we omit it.\\
Since $\rho(f)=\rho(\mathcal{L})=1$, so from the definition of $S(r,f)$ we get $S(r,f)=O(\log r)=S(r,\mathcal{L})$.
\end{proof}
\begin{lem}\label{new}
Let $f$, $g$ be two non-constant meromorphic functions sharing (1,t), where $t\in\mathbb{N}\cup\{0\}$. Then
$$\ol N(r,1;f)+\ol N(r,1;g)\leq N_{E}^{1)}(r,1;f)+\left(1-t\right) \ol N_{*}(r,1;f,g)+N(r,1;f).$$

\begin{proof}
Since $f$ and $g$ share $(1,t)$, we observe that $$\ol N(r,1;f)+\ol N(r,1;g)=2\ol N(r,1;f).$$
{\bf Case-I :} Suppose $t\geq 2$.\\
Let $z_0$ be a $1$ point of $f$ with multiplicity $p$ and a $1$ point of $g$ of multiplicity $q$. Since $f$ and $g$ share $(1,t)$, therefore $p\leq t$ implies $p=q$.
\par{\bf Subcase - I :} When $p\leq t$.
If $p=1$, then $z_0$ is counted once in both $N_{E}^{1)}(r,1;f)$ and $N(r,1;f)$. On the other hand $z_0$ is not counted in $\ol N_{*}(r,1;f,g)$. Again if $p\ne 1$, then $z_0$ is counted $p$ times (i.e., at least 2 times) in $N(r,1;f)$ and in this case $z_0$ is not counted in $N_{E}^{1)}(r,1;f) \;\text{and}\; \ol N_{*}(r,1;f,g)$. Therefore $z_0$ is counted at least $2$ times in $N_{E}^{1)}(r,1;f)+\left(1-t\right) \ol N_{*}(r,1;f,g)+N(r,1;f).$
\par{\bf Subcase - II :} When $p\geq (t+1)$. If $p=q$, then $z_0$ is counted $p$ time (i.e., at least 3 times) in  $N(r,1;f)$ and $z_0$ is not counted in $N_{E}^{1)}(r,1;f) \;\text{and}\; \ol N_{*}(r,1;f,g)$. When $p\ne q$, then $z_0$ is counted $(1-t)$ times in $(1-t)\ol N_{*}(r,1;f,g)$ and counted $p$ times (i.e., at least $t+1$ times) in  $N(r,1;f)$ and $z_0$ is not counted in $ N_{E}^{1)}(r,1;f)$; i.e., $z_0$ is counted at least $(1-t)+(t+1)=2$ times in $N_{E}^{1)}(r,1;f)+\left(1-t\right) \ol N_{*}(r,1;f,g)+N(r,1;f).$\\
Now since $z_0$ is counted two times in $\ol N(r,1;f)+\ol N(r,1;g).$ Therefore in any sub case we have 
$$\ol N(r,1;f)+\ol N(r,1;g)\leq N_{E}^{1)}(r,1;f)+\left(1-t\right) \ol N_{*}(r,1;f,g)+N(r,1;f) .$$
{\bf Case-II :} Suppose $t=1$.\\
Then clearly \bea 2\ol N(r,1;f)\nonumber &\leq& N(r,1;f|=1) +N(r,1
;f)\\\nonumber&=&  N_{E}^{1)}(r,1;f)+N(r,1;f).\nonumber \eea
Therefore, for $t=1$
$$\ol N(r,1;f)+\ol N(r,1;g)\leq N_{E}^{1)}(r,1;f)+\left(1-t\right) \ol N_{*}(r,1;f,g)+N(r,1;f).$$

{\bf Case-III :} Suppose that $t=0$.\\
Let $z_0$ be a $1$ point of $f$ with multiplicity $p$ and a $1$ point of $g$ of multiplicity $q$. If $p=q=1$, then $z_0$ is counted $2$ times in both $2\ol N(r,1;f)$ and $N_{E}^{1)}(r,1;f)+\left(1-t\right) \ol N_{*}(r,1;f,g)+N(r,1;f)$, as $\ol N_{*}(r,1;f,g)$ does not count $z_0$. If $p=1,\;q\neq 1$, then also $z_0$ is counted $2$ times in both $2\ol N(r,1;f)$ and\\ $N_{E}^{1)}(r,1;f)+\left(1-t\right)\ol N_{*}(r,1;f,g)+N(r,1;f)$, as in this case $N_{E}^{1)}(r,1;f)$ does not count $z_0$. Finally if $p\neq 1$, then $z_0$ is counted at least 2 times in $\left(1-t\right) \ol N_{*}(r,1;f,g)+N(r,1;f)$ and $z_0$ is not counted in $N_{E}^{1)}(r,1;f)$.\\
Therefore, for $t=0$, $$\ol N(r,1;f)+\ol N(r,1;g)\leq N_{E}^{1)}(r,1;f)+\left(1-t\right) \ol N_{*}(r,1;f,g)+N(r,1;f).$$
And hence in any case,	$$\ol N(r,1;f)+\ol N(r,1;g)\leq N_{E}^{1)}(r,1;f)+\left(1-t\right) \ol N_{*}(r,1;f,g)+N(r,1;f).$$

\end{proof}
\end{lem}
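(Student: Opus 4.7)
The plan is to verify the inequality by tracking the contribution of a single common $1$-point of $f$ and $g$ to each side. Since sharing $(1,t)$ in particular implies that $f$ and $g$ share the value $1$ ignoring multiplicities, every $1$-point of $f$ is a $1$-point of $g$ and vice versa, so the left-hand side equals $2\,\ol N(r,1;f)$; that is, each common $1$-point contributes exactly $2$ on the left. It therefore suffices to show that each such point contributes at least $2$ on the right, and then sum.

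First I would pick such a point $z_{0}$ and let $p,q$ denote its multiplicities as a $1$-point of $f$ and of $g$ respectively, then split into two regimes. If $p=q$, then $z_{0}$ does not enter $\ol N_{*}(r,1;f,g)$ while contributing $p$ to $N(r,1;f)$: for $p=1$ it contributes an additional $1$ to $N_{E}^{1)}(r,1;f)$, and for $p\ge 2$ the $N(r,1;f)$-contribution alone is already at least $2$. If $p\ne q$, the very definition of $(1,t)$-sharing forces $\min(p,q)>t$, hence $p\ge t+1$; here $z_{0}$ contributes $0$ to $N_{E}^{1)}$, exactly $1$ to $\ol N_{*}(r,1;f,g)$, and $p\ge t+1$ to $N(r,1;f)$, giving the right-hand total $(1-t)+p\ge (1-t)+(t+1)=2$.

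The only delicate point, and in my view the main obstacle to a uniform argument, is that for $t\ge 2$ the coefficient $1-t$ is \emph{negative}, so the slack needed on the left cannot come from the $\ol N_{*}$ term and must instead be recovered from the fact that the multiplicity in the mismatched case is at least $t+1$, not merely at least $2$. Once one observes that $(1,t)$-sharing automatically delivers this stronger lower bound on $\min(p,q)$ whenever $p\ne q$, the three regimes $t\ge 2$, $t=1$, $t=0$ collapse into the single bookkeeping above, and summation over all common $1$-points yields the claimed inequality.
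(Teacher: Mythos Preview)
Your argument is correct and is essentially the same pointwise contribution-counting that the paper carries out: both proofs reduce the left side to $2\ol N(r,1;f)$ and then check that each common $1$-point contributes at least $2$ to the right-hand side. The only difference is organizational---the paper separates into the three regimes $t\ge 2$, $t=1$, $t=0$ and runs the case analysis within each, whereas you split uniformly on $p=q$ versus $p\ne q$ and use the single observation $\min(p,q)\ge t+1$ to handle all $t$ at once; your packaging is a bit cleaner but the underlying mechanism is identical.
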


\begin{lem}
Let $F^{*}-1=\d\frac{a_{n}\prod\limits_{i=1}^{n}(f-w_{i})}{\psi(f)}$ and $G^{*}-1=\d\frac{a_{n}\prod\limits_{i=1}^{n}(\mathcal{L}-w_{i})}{\psi(\mathcal
{L})}$, where $f$  be a non-constant meromorphic function, $\mathcal{L}$ be an non-constant L-function, $a_{n},w_{i}\in\mathbb{C}-\{0\}$; $\forall i\in\{1,2,\ldots,n\}$ and  $\psi(z)$ be a polynomial of degree less than $n$ with $\psi(w_{i})\neq0$; $\forall i\in\{1,2,\ldots,n\}$. Further suppose that $F^{*}$ and $G^{*}$  share $(1,t)$, where $t\in\mathbb{N}\cup\{0\}$.  Then 
\be\nonumber \ol{N}_L(r,1;F^{*})\leq \frac{1}{t+1}\left[\ol{ N}(r,\infty;f)+\ol{ N}(r,0;f)-N_1(r,0,f')\right]+O(\log r),\ee
where $N_1(r,0,f')=N(r,0;f'|f\ne 0,w_1,w_2,...,w_n).$
Similar expression also holds for $\ol  N_L(r,1;G^{*})$.
\end{lem}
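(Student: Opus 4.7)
The plan is to reduce $\ol{N}_L(r,1;F^{*})$ to a count of multiple zeros of $f'$ via the explicit factorization of $F^{*}-1$, and then apply the logarithmic derivative lemma to bound that count. First I would carry out a short multiplicity analysis. Because $F^{*}$ and $G^{*}$ share $(1,t)$, any point $z_0$ contributing to $\ol N_L(r,1;F^{*})$ is a $1$-point at which the multiplicities $p$ of $F^{*}-1$ and $q$ of $G^{*}-1$ satisfy $p>q$, and the only way this is compatible with weighted sharing $(1,t)$ is $p\geq t+2$ (with $q\geq t+1$). Since $\psi(w_i)\neq 0$ for every $i$ and the $w_i$ are the simple zeros of $a_n\prod(x-w_i)$, the factorization forces $f(z_0)=w_j$ for exactly one $j$, and the multiplicity of $f-w_j$ at $z_0$ equals $p$. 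Consequently $f'$ vanishes at $z_0$ to order exactly $p-1\geq t+1$.

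Summing these local contributions gives $(t+1)\,\ol N_L(r,1;F^{*}) \leq N^{*}(r,0;f')$, where $N^{*}(r,0;f')$ counts (with multiplicity) the zeros of $f'$ at points where $f$ takes some value $w_j$. I then decompose the full zero-counting function of $f'$ by the value of $f$:
\[
N(r,0;f') \;=\; N(r,0;f'\mid f=0)\;+\;N^{*}(r,0;f')\;+\;N_1(r,0;f'),
\]
and use the elementary identity $N(r,0;f'\mid f=0)=N(r,0;f)-\ol N(r,0;f)$ (since a zero of $f$ of order $m$ is a zero of $f'$ of order $m-1$) to isolate $N^{*}(r,0;f')$.

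The final ingredient is the standard logarithmic derivative estimate
\[
N(r,0;f') \;\leq\; N(r,0;f)\;+\;\ol N(r,\infty;f)\;+\;S(r,f),
\]
obtained from $m(r,f'/f)=S(r,f)$, the identification $N(r,\infty;f'/f)=\ol N(r,0;f)+\ol N(r,\infty;f)$, and the first main theorem applied to $f'/f$. Substituting this in and cancelling $N(r,0;f)$ yields
\[
N^{*}(r,0;f')\;\leq\;\ol N(r,\infty;f)+\ol N(r,0;f)-N_1(r,0;f')+S(r,f),
\]
and Lemma~\ref{102} converts $S(r,f)$ into $O(\log r)$. Division by $t+1$ completes the bound for $F^{*}$; the identical argument with $\mathcal{L}$ in place of $f$ handles $G^{*}$, using $S(r,\mathcal{L})=O(\log r)$ from the same lemma. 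The only delicate step is the opening multiplicity case-check that produces $p-1\geq t+1$; everything else is a routine assembly of standard Nevanlinna inequalities.
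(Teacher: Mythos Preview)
Your argument is correct and is essentially the paper's own proof: both reduce $\ol N_L(r,1;F^{*})$ to $\frac{1}{t+1}$ times the excess multiplicity at the $w_i$-points of $f$, and then bound this quantity via the logarithmic-derivative lemma to arrive at $\ol N(r,0;f)+\ol N(r,\infty;f)+O(\log r)$. The only difference is organizational---the paper writes the chain directly through $N(r,0;f'/f)=N(r,\infty;f/f')\le N(r,\infty;f'/f)+O(\log r)$ rather than through your decomposition of $N(r,0;f')$---but the content is identical.
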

\begin{proof}
Since $F^{*}$ and $G^{*}$  share $(1,t)$, so using the first fundamental theorem we find that
\bea  \ol{ N}_L(r,1;F^{*}) &\leq& \ol{ N}(r,1;F^{*}|\geq t+2)\nonumber\\&&
\leq \frac{1}{t+1}\left[ N(r,1;F^{*})-\ol{ N}(r,1;F^{*})\right]
\nonumber\\&&
\leq \frac{1}{t+1}\left[\sum_{i=1}^{n}\left(N(r,w_i;f)-\ol{ N}(r,w_i;f)\right)\right]	\nonumber\\&&
\leq \frac{1}{t+1}\left[N(r,0;f'|f\ne 0)-N_1(r,0,f')\right]\nonumber\\&&
\leq \frac{1}{t+1}\left[N(r,0;\frac{f'}{f})-N_1(r,0,f')\right]\nonumber\\&&
\leq \frac{1}{t+1}\left[N(r,\infty;\frac{f}{f^{'}})-N_1(r,0,f')\right]+O(\log r)\nonumber\\&&
\leq \frac{1}{t+1}\left[N(r,\infty;\frac{f^{'}}{f})-N_1(r,0,f')\right]+O(\log r)\nonumber\\&&
\leq \frac{1}{t+1}\left[\ol{ N}(r,\infty;f)+\ol{ N}(r,0;f)-N_1(r,0,f')\right]+O(\log r)\nonumber\
%\\&&
%	\leq \frac{1}{p+1}\left[\ol{ N}(r,0;f)-N_1(r,0,f')\right]+S(r,f).\;\;[\because \ol{ N}(r,\infty;f)=O(\log r)]
\eea %as $f$ has finitely many poles and hence $\ol N(r,\infty;f)=S(r,f)$.
This proves the lemma.
\end{proof}

\begin{lem}\label{l11}

Let $P(z)$, $S$ and $s$ as defined by (\ref{el000}), (\ref{el00}) and (\ref{el0000}) respectively. Suppose that $f$, $\mathcal{L}$ share $(S,t)$, where  $t\in \mathbb{N}\cup\{0\}$ and $f$, $\mathcal{L}$ be a non-constant meromorphic function and an $L$-function respectively. Further suppose that  \be\label{el04}\mathcal{F}=\d\frac{P(f)-a_{0}}{-a_{0}}\;\; and\;\; \mathcal{G}=\d\frac{P(\mathcal{L})-a_{0}}{-a_{0}}.\ee Then for $n\geq 2s+5$, when $t\geq 2$ ,$n\geq 2s+6$, when $t=1$ and for $n\geq2s+11$, when $t=0$ we get the following.
\beas\frac{1}{\mathcal{F}-1}=\frac{A}{\mathcal{G}-1}+B,\eeas where $A(\neq0), B\in\mathbb{C}$.
\end{lem}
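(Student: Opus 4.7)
The plan is to introduce the auxiliary function
\[ H = \left(\frac{\mathcal{F}''}{\mathcal{F}'} - \frac{2\mathcal{F}'}{\mathcal{F}-1}\right) - \left(\frac{\mathcal{G}''}{\mathcal{G}'} - \frac{2\mathcal{G}'}{\mathcal{G}-1}\right), \]
modelled on (\ref{e2.2}), and to prove that $H \equiv 0$. Once this is established, integrating $H = 0$ twice yields
\[ \frac{1}{\mathcal{F}-1} = \frac{A}{\mathcal{G}-1} + B \]
for some $A, B \in \mathbb{C}$. The constant $A$ must be non-zero, since otherwise $\mathcal{F}-1 \equiv 1/B$ would force $P(f)$, and hence $f$, to be constant. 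Thus the whole conclusion reduces to showing $H \equiv 0$.

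First I would observe that, because $P$ has only simple zeros and $f$, $\mathcal{L}$ share $(S,t)$ with $S = \{z : P(z) = 0\}$, the functions $\mathcal{F}$ and $\mathcal{G}$ share $(1,t)$: their $1$-points are precisely the $S$-points of $f$ and $\mathcal{L}$, with matching multiplicities. By Lemma~\ref{102}, $S(r,f) = S(r,\mathcal{L}) = O(\log r)$, so all error terms in what follows are genuinely absorbed.

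Next, assume $H \not\equiv 0$ for contradiction. A standard inspection shows that poles of $H$ can arise only from (i) $1$-points of $\mathcal{F}$ and $\mathcal{G}$ whose multiplicities differ, contributing $\overline{N}_*(r,1;\mathcal{F},\mathcal{G})$; (ii) multiple poles of $\mathcal{F}$ or $\mathcal{G}$; and (iii) zeros of $\mathcal{F}'$ (respectively $\mathcal{G}'$) at points where $\mathcal{F} \neq 1, \infty$, which correspond to zeros of $f'$ paired with $f$-values among the $s$ critical points $\eta_1, \ldots, \eta_s$ of $P$. Combining this with Lemma~\ref{2.1} (linking $N_E^{1)}(r,1;\mathcal{F})$ to $N(r,H)$), Lemma~\ref{new} applied to $\mathcal{F}$, $\mathcal{G}$, and the $\overline{N}_L$-bound furnished by the preceding lemma, and then applying the Second Fundamental Theorem to each of $\mathcal{F}$, $\mathcal{G}$ at the value set $\{0, 1, \infty\} \cup \{1 - P(\eta_j)/a_0 : j=1,\ldots,s\}$, I would reach an inequality of the form
\[ n\bigl(T(r,f) + T(r,\mathcal{L})\bigr) \leq c(s,t)\bigl(T(r,f) + T(r,\mathcal{L})\bigr) + O(\log r), \]
where $c(s,t) = 2s+4$ for $t \geq 2$, $c(s,t) = 2s+5$ for $t = 1$, and $c(s,t) = 2s+10$ for $t = 0$; the identities $T(r,\mathcal{F}) = n\,T(r,f) + O(1)$ and $T(r,\mathcal{G}) = n\,T(r,\mathcal{L}) + O(1)$ are used here. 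For the stated lower bounds on $n$ this inequality is inconsistent, so $H \equiv 0$, as required.

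The principal obstacle is the precise bookkeeping in the Second Fundamental Theorem step: one must extract the full contribution of the $s$ distinct critical values of $P$ to the counting functions of $\mathcal{F}$ and $\mathcal{G}$, while isolating from the bound on $N(r,H)$ exactly those zeros of $\mathcal{F}'$, $\mathcal{G}'$ that are not already registered as $1$-points. The jump from the tight bounds $2s+5$ and $2s+6$ (for $t \geq 2$ and $t = 1$) to $2s+11$ (for $t = 0$) reflects the fact that in the unweighted case the terms $\overline{N}_L(r,1;\mathcal{F})$ and $\overline{N}_L(r,1;\mathcal{G})$ can no longer be absorbed into $N_E^{1)}(r,1;\mathcal{F})$ and must instead be estimated separately through the preceding lemma, which introduces the additional deficit.
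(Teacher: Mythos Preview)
Your overall architecture matches the paper's: introduce $H$, assume $H\not\equiv0$, bound $N(r,H)$, feed this through Lemmas~\ref{2.1}, \ref{102}, \ref{new} and the $\ol N_L$-estimate, and obtain a contradiction from the Second Fundamental Theorem; then integrate $H\equiv0$ to conclude.

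The gap is in how you apply the Second Fundamental Theorem. You propose to apply it to $\mathcal{F}$ and $\mathcal{G}$ at the value set $\{0,1,\infty\}\cup\{1-P(\eta_j)/a_0:j=1,\dots,s\}$, i.e.\ at the critical \emph{values} of $P$ (shifted and rescaled). But these $s$ numbers need not be distinct: the paper expressly allows $P$ to be non-critically-injective, so several $P(\eta_j)$ may coincide. Moreover $0$ is itself one of these critical values whenever some $m_i\ge2$ in (\ref{el000}), since then $\alpha_i$ coincides with some $\eta_j$ and $1-P(\alpha_i)/a_0=0$. Hence your target set can collapse, and you cannot in general extract the $s$ separate contributions required to reach the constants $2s+4$, $2s+5$, $2s+10$.

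The paper sidesteps this by applying the Second Fundamental Theorem to $f$ and to $\mathcal{L}$ directly, at the $n$ simple zeros $w_1,\dots,w_n$ of $P$, the $s$ critical \emph{points} $\eta_1,\dots,\eta_s$, and $\infty$. These $n+s+1$ values are automatically distinct (the $\eta_j$ are distinct by the definition of $s$, and no $w_i$ equals any $\eta_j$ because $P$ has only simple zeros), yielding
\[(n+s-1)T(r,f)\le \ol N(r,1;\mathcal{F})+\sum_{i=1}^{s}\ol N(r,\eta_i;f)+\ol N(r,\infty;f)-N_0(r,0;f')+S(r,f)\]
and the analogue for $\mathcal{L}$. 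Combining with the lemmas and $\ol N(r,\infty;\mathcal{L})=O(\log r)$ gives
\[nT(r)\le(2s+4)T(r)+(2-t)\,\ol N_*(r,1;\mathcal{F},\mathcal{G})+O(\log r),\qquad T(r)=\max\{T(r,f),T(r,\mathcal{L})\},\]
from which the three thresholds follow. Note also that the paper works with $T(r)=\max$ rather than your sum $T(r,f)+T(r,\mathcal{L})$; the two inequalities for $f$ and $\mathcal{L}$ are derived separately and only then combined.
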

\begin{proof}
According to the assumptions of the lemma we clearly have  $\mathcal{F}$, $\mathcal{G}$ share $(1,t)$ and $$\mathcal{F}^{'}=-\frac{na_{n}}{a_{0}}\prod\limits_{i=1}^{s}(f-\eta_{i})^{m_{i}}f^{'};\;\;\; \mathcal{G}^{'}=-\frac{na_{n}}{a_{0}}\prod\limits_{i=1}^{s}(\mathcal{L}-\eta_{i})^{m_{i}}\mathcal{L}^{'},$$ where $\sum\limits_{i=1}^{s}m_{i}=n-1$. 
Now consider $H$ as given by (\ref{e2.2}) for $\mathcal{F}$ and $\mathcal{G}$.
\vspace{0.1in}\par{\bf Case-I:} Suppose $H\not\equiv 0$. Then, it can be easily verified that $H$ has only simple poles and these poles come  from  the following points.
\begin{enumerate}
\item[(i)] $\eta_{i}$ points of $f$ and $\mathcal{L}$.
\item[(ii)] Poles of $f$ and $\mathcal{L}$.
\item[(iii)] $1$ points of $\mathcal{F}$ and $\mathcal{G}$ having different multiplicities.
\item[(iv)] Those zeros of $f^{'}$ and $\mathcal{L}^{'}$ which are not zeros of   $\prod\limits_{i=1}^{s}(f-\eta_{i})(\mathcal{F}-1)$ and $\prod\limits_{i=1}^{s}(\mathcal{L}-\eta_{i})(\mathcal{G}-1)$ respectively.
\end{enumerate}
Therefore we obtain 
\bea\label{e2.1} N(r,H)&\leq& \sum\limits_{i=1}^{s}\left[\ol N(r,\eta_{i};f)+\ol N(r,\eta_{i};\mathcal{L})\right]+\ol N(r,\infty;f)+\ol N(r,\infty;\mathcal{L})\\\nonumber&&+\ol N_{*}(r,1;\mathcal{F},\mathcal{G})+\ol N_{0}(r,0,f^{'})+\ol N_{0}(r,0,\mathcal{L}^{'}),\eea where $\ol N_{0}(r,0,f^{'})$ and $\ol N_{0}(r,0,\mathcal{L}^{'})$ denotes the reduced counting functions of those zeros of $f^{'}$ and $\mathcal{L^{'}}$ which are not zeros of   $\prod\limits_{i=1}^{s}(f-\eta_{i})(\mathcal{F}-1)$ and $\prod\limits_{i=1}^{s}(\mathcal{L}-\eta_{i})(\mathcal{G}-1)$ respectively.
Using the second fundamental theorem we get 
\bea\label{e2.3}&&(n+s-1)T(r,f)\\\nonumber&\leq&\ol N(r,1;\mathcal{F})+\sum\limits_{i=1}^{s}\ol N(r,\eta_{i};f)+\ol N(r,\infty;f)-N_{0}(r,0,f^{'})+S(r,f),\eea
\bea\label{e2.4}&&(n+s-1)T(r,\mathcal{L})\\\nonumber&\leq&\ol N(r,1;\mathcal{G})+\sum\limits_{i=1}^{s}\ol N(r,\eta_{i};\mathcal{L})+\ol N(r,\infty;\mathcal{L})-N_{0}(r,0,\mathcal{L}^{'})+S(r,\mathcal{L}).\eea
Let us denote by $T(r)=\text{max}\{T(r,f),T(r,\mathcal{L})\}$. Now combining (\ref{e2.3}) and (\ref{e2.4}) and using {\em Lemma} {\bf\ref{102}} and the fact that $\ol{N}(r,\infty;\mathcal{L})=O(\log r)$, we get
\bea
\label{e2.5}&&(n+s-1)\{T(r,f)+T(r,\mathcal{L})\}\\\nonumber&\leq&\ol N(r,1;\mathcal{F})+\ol N(r,1;\mathcal{G})+\sum\limits_{i=1}^{s}\left[\ol N(r,\eta_{i};f)+\ol N(r,\eta_{i};\mathcal{L})\right]\\\nonumber&&+\ol N(r,\infty;f)-N_{0}(r,0,f^{'})-N_{0}(r,0,\mathcal{L}^{'})+O(\log r).\nonumber
\eea

%From \ref{e2.2} we obtain \be\label{eq111} m(r,H)=S(r) \ee.

Now it follows from {\em Lemma} {\bf\ref{new}}, {\em Lemma} {\bf\ref{102}}, {\em Lemma} {\bf\ref{2.1}}, (\ref{e2.1}) and in view of $\ol{N}(r,\infty;\mathcal{L})=O(\log r)$ that 
\bea\label{eq11.4}&& \ol N(r,1;\mathcal{F})+\ol N(r,1;\mathcal{G})\\\nonumber&\leq&  (s+1)T(r,f)+sT(r,\mathcal{L})+(2-t)\ol N_{*}(r,1;\mathcal{F},\mathcal{G})+N(r,1;\mathcal{F})\\\nonumber&&+\ol N_{0}(r,0,f^{'})+\ol N_{0}(r,0,\mathcal{L}^{'})+O(\log r).\eea
and \bea\label{eq11.5}&&\ol N(r,1;\mathcal{F})+\ol N(r,1;\mathcal{G})\\\nonumber&\leq&  (s+1)T(r,f)+sT(r,\mathcal{L})+(2-t)\ol N_{*}(r,1;\mathcal{F},\mathcal{G})+N(r,1;\mathcal{G})\\\nonumber&&+\ol N_{0}(r,0,f^{'})+\ol N_{0}(r,0,\mathcal{L}^{'})+O(\log r).\eea\\

Now from (\ref{e2.5}) , (\ref{eq11.4}) and using $T(r,\mathcal{F})=nT(r,f)$ we get
\bea(n+s-1)\{T(r,f)+T(r,\mathcal{L})\}\nonumber&\leq&(2s+2)T(r,f)+2sT(r,\mathcal{L})\\\nonumber&&+(2-t)\ol N_{*}(r,1;\mathcal{F},\mathcal{G})+nT(r,f)+O(\log r);\eea
i.e.,
\bea\label{eq11.6}&& nT(r,\mathcal{L})\\\nonumber
&\leq& (s+3)T(r,f)+(s+1)T(r,\mathcal{L})+(2-t)\ol N_{*}(r,1;\mathcal{F},\mathcal{G})+O(\log r).\\\nonumber&
%\leq& (s+3)T(r,f)+(s+1)T(r,\mathcal{L})+\\\nonumber&& \frac{(2-t)}{t+1}\left[\ol{ N}(r,\infty;f)+\ol{ N}(r,0;f)+\ol{ N}(r,\infty;\mathcal{L})+\ol{ N}(r,0;\mathcal{L})\right]+S(r) \nonumber\\&\leq& (s+3+\frac{2(2-t)}{t+1})T(r,f)+(s+1+\frac{(2-t)}{t+1})T(r,\mathcal{L})+S(r)\nonumber\\&\leq& \left(2s+4+\frac{3(2-t)}{t+1}\right)T(r)+S(r)
.\nonumber\eea

Also from (\ref{e2.5}) , (\ref{eq11.5}) and using $T(r,\mathcal{G})=nT(r,\mathcal{L})$ we get

\bea(n+s-1)\{T(r,f)+T(r,\mathcal{L})\}\nonumber&\leq&(2s+2)T(r,f)+2sT(r,\mathcal{L})+\\\nonumber&&(2-t)\ol N_{*}(r,1;\mathcal{F},\mathcal{G})+nT(r,\mathcal{L})+O(\log r);\eea
i.e.,
\bea\label{eq11.7}&& nT(r,f)\\\nonumber
&\leq& (s+3)T(r,f)+(s+1)T(r,\mathcal{L})+(2-t)\ol N_{*}(r,1;\mathcal{F},\mathcal{G})+O(\log r).\\\nonumber&
%\leq& (s+3)T(r,f)+(s+1)T(r,\mathcal{L})+\\\nonumber&& \frac{(2-t)}{t+1}\left[\ol{ N}(r,\infty;f)+\ol{ N}(r,0;f)+\ol{ N}(r,\infty;\mathcal{L})+\ol{ N}(r,0;\mathcal{L})\right]+S(r) \nonumber\\&\leq& (s+3+\frac{2(2-t)}{t+1})T(r,f)+(s+1+\frac{(2-t)}{t+1})T(r,\mathcal{L})+S(r)\nonumber\\&\leq& \left(2s+4+\frac{3(2-t)}{t+1}\right)T(r)+S(r)
.\nonumber\eea
Therefore, from (\ref{eq11.6}) and (\ref{eq11.7}) we get,

\be\label{11.8}nT(r)\leq (2s+4)T(r)+(2-t)\ol N_{*}(r,1;\mathcal{F},\mathcal{G})+O(\log r).\ee
Now for $t\geq2$; from (\ref{11.8}) we get$$nT(r)\leq (2s+4)T(r)+O(\log r),$$
which is a contradiction for $n\geq 2s+5$.\\          
When $t <2$, from (\ref{11.8}) we get
\bea\label{eq11.9} nT(r)
&\leq& (2s+4)T(r)+(2-t)\ol N_{*}(r,1;\mathcal{F},\mathcal{G})+O(\log r)\\\nonumber&
\leq& (2s+4)T(r)+\frac{(2-t)}{t+1}[\ol{ N}(r,\infty;f)+\ol{ N}(r,0;f)\\\nonumber&&+\ol{ N}(r,\infty;\mathcal{L})+\ol{ N}(r,0;\mathcal{L})]+O(\log r) \nonumber\\&\leq& \left(2s+4+\frac{3(2-t)}{t+1}\right)T(r)+O(\log r)
.\nonumber\eea

Now for $t=1$; from (\ref{eq11.9}) we get$$nT(r)\leq (2s+4+\frac{3}{2})T(r)+O(\log r),$$
which is a contradiction for $n\geq 2s+6$.\\  
For $t=0$; from (\ref{eq11.9}) we get $$nT(r)\leq (2s+4+6)T(r)+O(\log r),$$
which is a contradiction for $n\geq 2s+11$.\\

\vspace{0.1in}\par{\bf Case-II:} Suppose	$H\equiv0$. Now integrating (\ref{e2.2}), we find that \bea\nonumber \frac{1}{\mathcal{F}-1}=\frac{A}{\mathcal{G}-1}+B,\;\;\; \mbox{where}\;\; A(\neq0), B\in\mathbb{C}.\eea 	
\end{proof}

\begin{lem}\label{l1.1}
Let $R(z)$, $S$ and $l$ as defined by (\ref{el2}), (\ref{el02}) and (\ref{el3}) respectively. Suppose that $f$, $\mathcal{L}$ share $(S,t)$, where  $t\in \mathbb{N}\cup\{0\}$ and $f$, $\mathcal{L}$ be a non-constant meromorphic function and an $L$-function respectively. Further suppose that \be\label{el05}\mathbb{F}= R(f)\;\; and\;\; \mathbb{G}=R(\mathcal{L}).\ee 
Then for $n\geq 2l+5$, when $t\geq2$, for $n\geq 2l+6$, when $t=1$ and for $n\geq 2l+11$, when $t=0$ we get the following.
\beas\frac{1}{\mathbb{F}-1}=\frac{A}{\mathbb{G}-1}+B,\eeas where $A(\neq0), B\in\mathbb{C}$.
\end{lem}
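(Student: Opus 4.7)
The plan is to mirror, mutatis mutandis, the proof of Lemma \ref{l11}, replacing the pair $(\mathcal{F}, \mathcal{G})$ by $(\mathbb{F}, \mathbb{G}) = (R(f), R(\mathcal{L}))$ and the critical data $(\eta_i, s)$ of the polynomial $P$ by the critical data $(\delta_i, l)$ of the rational function $R$. The two structural identities to exploit are $\mathbb{F} - 1 = -P(f)/\phi(f)$ (so the $1$-points of $\mathbb{F}$ are exactly the preimages under $f$ of the $n$ simple zeros of $P$, giving $\bar N(r,1;\mathbb{F})$ a direct meaning) and $\mathbb{F}' = R'(f)\,f'$ (so the critical locus of $R$ enters through the $\delta_i$, exactly as the $\eta_i$ did for $P$ in Lemma \ref{l11}). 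Also $T(r, \mathbb{F}) = nT(r, f) + O(1)$ since $R$ is a rational function of degree $n$.

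Form $H$ from $\mathbb{F}$ and $\mathbb{G}$ via (\ref{e2.2}) and split into two cases. If $H \equiv 0$, then integrating (\ref{e2.2}) twice immediately yields $1/(\mathbb{F}-1) = A/(\mathbb{G}-1) + B$ for some $A(\neq 0), B \in \mathbb{C}$, which is the conclusion. The substance of the lemma is the complementary case $H \not\equiv 0$, to be ruled out under the stated bounds on $n$.

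In that case, the standard residue analysis shows that $H$ has only simple poles, coming from (i) $\delta_i$-points of $f$ and $\mathcal{L}$, (ii) poles of $\mathbb{F}$ and $\mathbb{G}$ (which include poles of $f, \mathcal{L}$ together with their $\beta_j$-points), (iii) $1$-points of $\mathbb{F}, \mathbb{G}$ of differing multiplicity, contributing $\bar N_*(r, 1; \mathbb{F}, \mathbb{G})$, and (iv) certain leftover zeros of $f'$ and $\mathcal{L}'$. Apply the second fundamental theorem to $f$ with the $n + l + 1$ targets $\{\sigma_1, \ldots, \sigma_n, \delta_1, \ldots, \delta_l, \infty\}$ and analogously to $\mathcal{L}$, absorbing $\bar N(r, \infty; \mathcal{L})$ and $S(r, \mathcal{L})$ into $O(\log r)$ via Lemma \ref{102}. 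Bound $\bar N(r,1;\mathbb{F}) + \bar N(r,1;\mathbb{G})$ using Lemma \ref{new} and Lemma \ref{2.1} together with the pole count of $H$ and the estimate $N(r,1;\mathbb{F}) \leq n T(r,f) + O(1)$. Summing and using $\bar N_0 \leq N_0$ to discard auxiliary derivative counts should produce an inequality of the form
\begin{equation*}
n\,T(r) \;\leq\; (2l + 4)\,T(r) + (2-t)\,\bar N_*(r, 1; \mathbb{F}, \mathbb{G}) + O(\log r),
\end{equation*}
where $T(r) = \max\{T(r, f), T(r, \mathcal{L})\}$. For $t \geq 2$ the $\bar N_*$ term is negligible and $n \geq 2l+5$ closes the case. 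For $t \in \{0, 1\}$, the unnumbered $\bar N_L$ lemma preceding Lemma \ref{l11} gives $\bar N_*(r, 1; \mathbb{F}, \mathbb{G}) \leq \tfrac{3}{t+1} T(r) + O(\log r)$, yielding the remaining thresholds $n \geq 2l+6$ and $n \geq 2l + 11$, respectively.

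The principal obstacle lies in item (ii). Unlike the polynomial $\mathcal{F}$ of Lemma \ref{l11}, the rational function $\mathbb{F}$ has extra poles at the $\beta_j$-points of $f$, so \emph{a priori} $\bar N(r, \infty; \mathbb{F}) = \bar N(r, \infty; f) + \sum_{j=1}^k \bar N(r, \beta_j; f)$, which threatens to introduce a $k$-dependence into the final estimate. The delicate point of the proof is to verify that these contributions to $N(r, H)$ do not inflate the threshold beyond the claimed $2l + 5$, $2l + 6$, $2l + 11$: this has to be done using the residue identity that a pole of $\mathbb{F}$ of order $m$ contributes residue $m - 1$ to $H$ (so simple poles of $\mathbb{F}$ contribute nothing), together with careful book-keeping that the residual multi-order pole contributions are already paid for by counts on the left-hand side of the 2FT inequality and by the $\bar N_*$ term.
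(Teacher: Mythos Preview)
Your outline matches the paper's proof step for step, including the key inequality $nT(r)\leq(2l+4)T(r)+(2-t)\ol N_*(r,1;\mathbb{F},\mathbb{G})+O(\log r)$ and the case split on $t$. The only divergence is your final paragraph: the paper does not regard the $\beta_j$-points as an obstacle at all. In its enumeration of the poles of $H$, item (ii) is ``poles of $f$ and $\mathcal{L}$'' (not of $\mathbb{F},\mathbb{G}$), and any contribution coming from a \emph{multiple} $\beta_j$-point of $f$ (where $f'=0$) is absorbed into item (iv), the term $\ol N_0(r,0;f')$, since $\beta_j$ is neither a $\delta_i$ nor a zero of $P$. A \emph{simple} $\beta_j$-point of $f$ with $m_j=1$ gives a simple pole of $\mathbb{F}$ and hence, as you correctly compute, residue $0$ in $H$. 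Thus no $k$-dependence enters the estimate, and the absorbing mechanism is $\ol N_0(r,0;f')$, not $\ol N_*$ or the left side of the second-fundamental-theorem inequality as you propose.
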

\begin{proof} 	
Clearly $\mathbb{F}, \mathbb{G}$ share $(1,t)$ and in view of (\ref{el3}) we have
\be \nonumber \mathbb{F}^{'}=\frac{\gamma\prod\limits_{j=1}^{l}(f-\delta_j)^{q_{j}}}{\prod\limits_{j=1}^{k}(f-\beta_j)^{p_{j}}}f',\;\;\;\;\;\;\mathbb{G}^{'}=\frac{\gamma\prod\limits_{j=1}^{l}(\mathcal{L}-\delta_j)^{q_{j}}}{\prod\limits_{j=1}^{k}(\mathcal{L}-\beta_j)^{p_{j}}}\mathcal{L}'.\ee %where $\gamma\in\mathbb{C}-\{0\}$, $\delta_{j}$ be the distinct zeros of $R^{'}(z)$ and $q_{j}$ being their respective multiplicities for all $j\in\{1,2,\ldots,l\}$.
Now consider $H$ as given by (\ref{e2.2}) for $\mathbb{F}$ and $\mathbb{G}$.
\vspace{0.1in}\par{\bf Case-I:} Suppose $H\not\equiv 0$. Since $H$ has only simple poles and in this case these poles come  from  the following points.
\begin{enumerate}
\item[(i)] $\delta_{j}$ -points of $f$ and $\mathcal{L}$.
\item[(ii)] Poles of $f$ and $\mathcal{L}$.
\item[(iii)] $1$ points of $\mathbb{F}$ and $\mathbb{G}$ having different multiplicities.
\item[(iv)] Those zeros of $f^{'}$ and $\mathcal{L^{'}}$ which are not zeros of   $\prod\limits_{j=1}^{l}(f-\delta_{j})(\mathbb{F}-1)$ and $\prod\limits_{j=1}^{l}(\mathcal{L}-\delta_{j})(\mathbb{G}-1)$ respectively.
\end{enumerate}
Therefore we obtain 
\bea\label{ep2} N(r,H) &\leq& \ol{N}(r,\infty;f)+\sum_{j=1}^{l}\ol{N}(r,\delta_j;f)+\ol N_0(r,0;f') +\ol{N}(r,\infty;\mathcal{L})\\\nonumber&&+\sum_{j=1}^{l}\ol{N}(r,\delta_j;\mathcal{L})+\ol N_0(r,0;\mathcal{L}')+\ol N_*(r,1;\mathbb{F},\mathbb{G})\\\nonumber&&+S(r,f)+S(r,\mathcal{L}),\eea
where we write $\ol N_0(r,0;f')$ for the reduced counting function of the zeros of $f'$ that are not zeros
of $(\mathbb{F}-1)\prod\limits_{j=1}^{l}(f-\delta_j)^{q_{j}}$ and $\ol N_0(r,0;\mathcal{L}')$ is similarly defined. Now using the second fundamental theorem we get
\bea\label{e3.3}&&(n+l-1)T(r,f)\\\nonumber&\leq&\ol N(r,1;\mathbb{F})+\sum\limits_{i=1}^{l}\ol N(r,\delta_{i};f)+\ol N(r,\infty;f)-N_{0}(r,0,f^{'})+S(r,f),\eea
\bea\label{e3.4}&&(n+l-1)T(r,\mathcal{L})\\\nonumber&\leq&\ol N(r,1;\mathbb{G})+\sum\limits_{i=1}^{l}\ol N(r,\delta_{i};\mathcal{L})+\ol N(r,\infty;\mathcal{L})-N_{0}(r,0,\mathcal{L}^{'})+S(r,\mathcal{L}).\eea
Let us denote by $T(r)=\text{max}\{T(r,f),T(r,\mathcal{L})\}$. 	 Now combining (\ref{e3.3}), (\ref{e3.4}) and using {\em Lemma} {\bf\ref{102}} and the fact that $\ol{N}(r,\infty;\mathcal{L})=O(\log r)$, we get
\bea\label{e3.5}&&(n+l-1)\{T(r,f)+T(r,\mathcal{L})\}\\\nonumber&\leq&\ol N(r,1;\mathbb{F})+\ol N(r,1;\mathbb{G})+\sum\limits_{i=1}^{l}\left[\ol N(r,\delta_{i};f)+\ol N(r,\delta_{i};\mathcal{L})\right]\\\nonumber&&+\ol N(r,\infty;f)-N_{0}(r,0,f^{'})-N_{0}(r,0,\mathcal{L}^{'})+O(\log r).\eea 	  		
%From \ref{e2.2} we obtain \be\label{eq111} m(r,H)=S(r) \ee.
Now it follows from {\em Lemma} {\bf\ref{new}}, {\em Lemma} {\bf\ref{102}}, {\em Lemma} {\bf\ref{2.1}} and (\ref{ep2}) and in view of $\ol{N}(r,\infty;\mathcal{L})=O(\log r)$ that 
\bea \label{eq12.4} &&\ol N(r,1;\mathbb{F})+\ol N(r,1;\mathbb{G})\\\nonumber&\leq&  (l+1)T(r,f)+lT(r,\mathcal{L})+(2-t)\ol N_{*}(r,1;\mathbb{F},\mathbb{G})+N(r,1;\mathbb{F})\\\nonumber&&+\ol N_{0}(r,0,f^{'})+\ol N_{0}(r,0,\mathcal{L}^{'})+O(\log r).\eea
and \bea\label{eq12.5}&& \ol N(r,1;\mathbb{F})+\ol N(r,1;\mathbb{G})\\\nonumber&\leq& (l+1)T(r,f)+lT(r,\mathcal{L})+(2-t)\ol N_{*}(r,1;\mathbb{F},\mathbb{G})\\\nonumber&&+N(r,1;\mathbb{G})+\ol N_{0}(r,0,f^{'})+\ol N_{0}(r,0,\mathcal{L}^{'})+O(\log r).\eea 	  	
Now from (\ref{e3.5}), (\ref{eq12.4}) and using $T(r,\mathbb{F})=nT(r,f)$ we get
\bea(n+l-1)\{T(r,f)+T(r,\mathcal{L})\}\nonumber&\leq&(2l+2)T(r,f)+2lT(r,\mathcal{L})+(2-t)\ol N_{*}(r,1;\mathbb{F},\mathbb{G})\\\nonumber&&+nT(r,f)+O(\log r);\eea
i.e.,
\bea\label{eq12.6} &&nT(r,\mathcal{L})\\\nonumber
&\leq& (l+3)T(r,f)+(l+1)T(r,\mathcal{L})+(2-t)\ol N_{*}(r,1;\mathbb{F},\mathbb{G})+O(\log r).\\\nonumber&
%\leq& (l+3)T(r,f)+(l+1)T(r,\mathcal{L})+\\\nonumber&& \frac{(2-t)}{t+1}\left[\ol{ N}(r,\infty;f)+\ol{ N}(r,0;f)+\ol{ N}(r,\infty;\mathcal{L})+\ol{ N}(r,0;\mathcal{L})\right]+S(r) \nonumber\\&\leq& (l+3+\frac{2(2-t)}{t+1})T(r,f)+(l+1+\frac{(2-t)}{t+1})T(r,\mathcal{L})+S(r)\nonumber\\&\leq& \left(2l+4+\frac{3(2-t)}{t+1}\right)T(r)+S(r).
\nonumber\eea 	   
Also from (\ref{e3.5}), (\ref{eq12.5}) and using $T(r,\mathbb{G})=nT(r,\mathcal{L})$ we get 	   
\bea(n+l-1)\{T(r,f)+T(r,\mathcal{L})\}\nonumber&\leq&(2l+2)T(r,f)+2lT(r,\mathcal{L})+(2-t)\ol N_{*}(r,1;\mathbb{F},\mathbb{G})\\\nonumber&&+nT(r,\mathcal{L})+O(\log r);\eea
i.e.,
\bea\label{eq12.7}&& nT(r,f)\\\nonumber
&\leq& (l+3)T(r,f)+(l+1)T(r,\mathcal{L})+(2-t)\ol N_{*}(r,1;\mathbb{F},\mathbb{G})+O(\log r).\\\nonumber&
%\leq& (l+3)T(r,f)+(l+1)T(r,\mathcal{L})+\\\nonumber&& \frac{(2-t)}{t+1}\left[\ol{ N}(r,\infty;f)+\ol{ N}(r,0;f)+\ol{ N}(r,\infty;\mathcal{L})+\ol{ N}(r,0;\mathcal{L})\right]+S(r) \nonumber\\&\leq& (l+3+\frac{2(2-t)}{t+1})T(r,f)+(l+1+\frac{(2-t)}{t+1})T(r,\mathcal{L})+S(r)\nonumber\\&\leq& \left(2l+4+\frac{3(2-t)}{t+1}\right)T(r)+S(r).
\nonumber\eea
Therefore, from (\ref{eq12.6}) and (\ref{eq12.7}) we get,

\be\label{12.8}nT(r)\leq (2l+4)T(r)+(2-t)\ol N_{*}(r,1;\mathbb{F},\mathbb{G})+O(\log r).\ee
Now for $t\geq2$; from (\ref{12.8}) we get$$nT(r)\leq (2l+4)T(r)+O(\log r),$$
which is a contradiction for $n\geq 2l+5$.\\          
When $t <2$, from (\ref{12.8}) we get
\bea\label{eq12.9} &&nT(r)\\\nonumber
&\leq& (2l+4)T(r)+(2-t)\ol N_{*}(r,1;\mathbb{F},\mathbb{G})+O(\log r)\\\nonumber&
\leq& (2l+4)T(r)+ \frac{(2-t)}{t+1}\ol{ N}(r,\infty;f)+\ol{ N}(r,0;f)+\ol{ N}(r,\infty;\mathcal{L})\\\nonumber&&+\ol{ N}(r,0;\mathcal{L})+O(\log r) \nonumber\\&\leq& \left(2l+4+\frac{3(2-t)}{t+1}\right)T(r)+O(\log r)
.\nonumber\eea

Now for $t=1$; from (\ref{eq12.9}) we get$$nT(r)\leq (2l+4+\frac{3}{2})T(r)+O(\log r),$$
which is a contradiction for $n\geq 2l+6$.\\  
For $t=0$; from (\ref{eq12.9}) we get $$nT(r)\leq (2l+4+6)T(r)+O(\log r),$$
which is a contradiction for $n\geq 2l+11$.\\
% Therefore, from (\ref{eq12.6}) and (\ref{eq12.7}) we get,

% \be\label{12.8}nT(r)\leq \left(2l+4+\frac{3(2-t)}{t+1}\right)T(r)+S(r).\ee
%  Now for $t=0$; from (\ref{12.8}) we get $$nT(r)\leq (2l+4+6)T(r)+S(r),$$
%       which is a contradiction for $n\geq 2l+11$.\\
%  For $t=1$; from (\ref{12.8}) we get$$nT(r)\leq (2l+4+\frac{3}{2})T(r)+S(r),$$
%      which is a contradiction for $n\geq 2l+6$.\\
%  For $t\geq2$; from (\ref{12.8}) we get$$nT(r)\leq (2l+4)T(r)+S(r),$$
%      which is a contradiction for $n\geq 2l+5$.

\vspace{0.1in}\par{\bf Case-II:} Suppose	$H\equiv0$. Now integrating (\ref{e2.2}), we find that \bea\nonumber \frac{1}{\mathbb{F}-1}=\frac{A}{\mathbb{G}-1}+B,\;\;\; \mbox{where}\;\; A(\neq0), B\in\mathbb{C}.\eea 
%	i.e; \beas\frac{\phi(f)}{P(f)}=\frac{A\phi(\mathcal{L})}{P(\mathcal{L})}+B,\;\;\; \mbox{where}\;\; A(\ne0),B(=-A^{'})\in\mathbb{C}.\eeas
\end{proof}

\begin{lem}\label{l2}\cite{hxy0}
Let $F$ and $G$ be two non-constant meromorphic functions such that\\ \beas\displaystyle\frac{1}{F-1}=\frac{A}{G-1}+B,\eeas where $A(\ne0),B\in\mathbb{C}$. If
\begin{equation*}
\ol{N}(r,0; F)+	\ol{N}(r,\infty; F)+	\ol{N}(r,0; G)+	\ol{N}(r,\infty; G) < T(r),
\end{equation*} 
where $T(r)=\max\{T(r,F),T(r,G)\}$. Then either $FG=1$ or $F=G.$
\end{lem}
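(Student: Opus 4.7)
The plan is to recognize the hypothesis as a Möbius relation between $F$ and $G$, pick out the two algebraic configurations that produce the two conclusions, and eliminate every other configuration by Nevanlinna's second fundamental theorem. Solving $\tfrac{1}{F-1}=\tfrac{A}{G-1}+B$ for $F$ gives
$$F=\frac{(B+1)G-(B-A+1)}{BG-(B-A)},$$
so $F=\Phi(G)$ for a Möbius transformation $\Phi$. In particular $T(r,F)=T(r,G)+O(1)$, hence $T(r)=T(r,G)+O(1)=T(r,F)+O(1)$ and the small-function classes satisfy $S(r,F)=S(r,G)=S(r)$.

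Matching coefficients, $\Phi$ is the identity iff $(A,B)=(1,0)$, which gives $F=G$; and $\Phi$ is the inversion $z\mapsto 1/z$ iff $(A,B)=(-1,-1)$, which gives $FG=1$. I would assume henceforth that $\Phi$ is neither of these and aim for a contradiction. Introduce the two distinguished $G$-values
$$c_0=\frac{B-A+1}{B+1},\qquad c_\infty=\frac{B-A}{B},$$
so that $\Phi(c_0)=0$ and $\Phi(c_\infty)=\infty$. Then
$$\ol N(r,0;F)=\ol N(r,c_0;G),\qquad \ol N(r,\infty;F)=\ol N(r,c_\infty;G).$$

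In the generic case the four values $0,\infty,c_0,c_\infty$ are distinct (note $c_0=c_\infty$ forces $A=0$, excluded), and Nevanlinna's second fundamental theorem applied to $G$ at these four points yields
$$2\,T(r,G)\le \ol N(r,0;G)+\ol N(r,\infty;G)+\ol N(r,c_0;G)+\ol N(r,c_\infty;G)+S(r).$$
The right-hand side equals $\ol N(r,0;G)+\ol N(r,\infty;G)+\ol N(r,0;F)+\ol N(r,\infty;F)+S(r)$, which by the hypothesis is strictly less than $T(r)+S(r)=T(r,G)+S(r)$. Therefore $T(r,G)\le S(r)$, a contradiction.

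The main obstacle is the collapse of the four distinguished points in the boundary cases $B=0$, $B=-1$, $A=B$, or $A=B+1$, where one of $c_0,c_\infty$ coincides with $0$ or $\infty$ and the four-value SFT loses a term. In each such degenerate case the Möbius structure simplifies sharply, e.g.\ if $B=0$ then $F$ is affine in $G$ so $\ol N(r,\infty;F)=\ol N(r,\infty;G)$, while if $B=-1$ then $F$ has no zeros. I would handle these by running SFT symmetrically on both $F$ and $G$ with three values (exploiting the identity $\ol N(r,1;F)=\ol N(r,1;G)$ coming from $\Phi(1)=1$) to obtain the same bound $T(r,G)\le S(r)$ in each sub-case, after verifying that the excluded configurations $(A,B)=(1,0)$ and $(A,B)=(-1,-1)$ are the only ones consistent with the absence of a contradiction.
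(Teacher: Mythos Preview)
The paper does not prove this lemma; it is quoted from Yi's paper \cite{hxy0} and used as a black box. Your outline is exactly the standard route: rewrite the bilinear relation as $F=\Phi(G)$ for a M\"obius map $\Phi$, identify $(A,B)=(1,0)$ and $(A,B)=(-1,-1)$ as the two algebraic configurations yielding $F=G$ and $FG=1$, and in every other configuration use Nevanlinna's second fundamental theorem on the four $G$-values $\{0,\infty,c_0,c_\infty\}$ to force $T(r,G)\le S(r)$. Your computation of $c_0,c_\infty$ and the list of collapse cases $B=0$, $B=-1$, $A=B$, $A=B+1$ are correct.

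One point deserves care. In the degenerate cases you lose a value and the SFT drops to three points, so the inequality you obtain is only $T(r,G)\le \bigl(\text{sum of at most the four hypothesis terms}\bigr)+S(r)$, not $2T(r,G)\le\cdots$. With the hypothesis read \emph{literally} as a bare strict inequality $<T(r)$, this yields $T(r,G)<T(r)+S(r)$, which is no contradiction. The lemma as Yi states it (and as the paper actually uses it in the proofs of Theorems~\ref{t1} and~\ref{t2}) has the hypothesis in the effective form $\le \lambda\,T(r)+S(r)$ for some fixed $\lambda<1$; indeed the paper verifies a bound of the shape $\frac{2p+2}{n}T(r,\mathcal F)$ with $\frac{2p+2}{n}<1$. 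Under that reading each degenerate branch closes immediately from the three-value SFT, and your symmetric-SFT idea is not even needed. So your plan is sound; just make the quantitative form of the hypothesis explicit when you write up the boundary cases.
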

\begin{lem}\label{14}
%	Let $F=\prod\limits_{i=1}^{p}(f-\alpha_i)^{m_i}$ and $G=\prod\limits_{i=1}^{p}(\mathcal{L}-\alpha_i)^{m_i}$, where $f$ be a non-constant meromorphic function having finitely number of poles and $\mathcal{L}$ be an L-function, $p \geq 2$, $\sum\limits_{i=1}^{p}m_i=n$ and $\alpha_i \in \mathbb{C}$ for $i=1, 2, \ldots ,p$. 
Let $\mathcal{F}$, $\mathcal{G}$ be defined by (\ref{el04}). If $p \geq 2$
%\begin{enumerate}
%  \item[(i)] $p\geq 3$, or
% \item [(ii)] $p=2$ and $gcd(m_i,n)=1$ for at least one of the $m_i$'s such that $n=\sum\limits_{i=1}^{2} m_i \geq 3$, or
%\item[(c)] $p=2$ and $gcd(m_i,n)\ne 1$ for each $m_i$ such that $n\geq (b_1+b_2)+1$, where $b_i=gcd(m_i,n)$,
%\end{enumerate}
Then $\mathcal{F}\mathcal{G} \not= a$, where $a$ is non-zero complex constant.  
\end{lem}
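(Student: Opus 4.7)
The plan is to argue by contradiction: assume $\mathcal{F}\mathcal{G}=a$ for some non-zero constant $a$ and derive a contradiction. The essential input is that $\mathcal{L}$, being an L-function, has at most one pole (at $s=1$), a property that will be used repeatedly to control zeros of $\mathcal{F}$.

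First I would dispose of the subcase $a \neq 1$ using the sharing hypothesis together with Picard's theorem. By construction, $\mathcal{F}(z_0) = 1$ iff $P(f(z_0)) = 0$, i.e., $f(z_0) \in S$, and similarly $\mathcal{G}(z_0) = 1$ iff $\mathcal{L}(z_0) \in S$. Since $E_f(S,t) = E_\mathcal{L}(S,t)$, the sets $\{z:f(z)\in S\}$ and $\{z:\mathcal{L}(z)\in S\}$ coincide, so at every such point $\mathcal{F}\mathcal{G} = 1$. If $a \neq 1$ this forces no such point to exist, i.e., $f$ omits all $n$ roots of $P$; since $n \geq 2p+2 \geq 6$, Picard's theorem is violated.

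For $a = 1$, the identity $\mathcal{F}\mathcal{G}=1$ pairs the zeros of $\mathcal{F}$ (the $\alpha_i$-points of $f$, $1 \leq i \leq p$) with the poles of $\mathcal{G}$, which are poles of $\mathcal{L}$. Since $\mathcal{L}$ has at most one pole, $\bar{N}(r,\alpha_i;f) = O(\log r)$ for every $i$. Applying Nevanlinna's Second Main Theorem to $f$ at the $p+1$ values $\alpha_1,\ldots,\alpha_p,\infty$ and using the trivial bound $\bar{N}(r,\infty;f) \leq T(r,f)+O(1)$ yields
\[(p-1)T(r,f) \leq \sum_{i=1}^{p}\bar{N}(r,\alpha_i;f) + \bar{N}(r,\infty;f) + S(r,f) \leq T(r,f) + O(\log r) + S(r,f),\]
hence $(p-2)T(r,f) = O(\log r) + S(r,f)$, an immediate contradiction whenever $p \geq 3$.

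For $p = 2$ I would refine the argument through the pole-multiplicity correspondence encoded in $\mathcal{F}\mathcal{G}=1$: each pole of $f$ of order $\mu$ matches an $\alpha_j$-point of $\mathcal{L}$ of order $\nu = \mu n/m_j$, so $\nu \geq n/d$ with $d := \gcd(m_1,m_2)$. Consequently $\bar{N}(r,\alpha_i;\mathcal{L}) \leq (d/n)T(r,\mathcal{L}) + O(1)$, and the Second Main Theorem applied to $\mathcal{L}$ at $\alpha_1,\alpha_2,\infty$ forces $(1-2d/n)T(r,\mathcal{L}) \leq O(\log r) + S(r,\mathcal{L})$, contradicting $T(r,\mathcal{L})\to\infty$ as long as $n > 2d$. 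The main obstacle is the degenerate residual subcase $n = 2d$, i.e., $m_1 = m_2$, where this bound becomes trivial; here I would reduce $\mathcal{F}\mathcal{G}=1$ to the quadratic identity $(f-\alpha_1)(f-\alpha_2)(\mathcal{L}-\alpha_1)(\mathcal{L}-\alpha_2) = c$ and rule it out by combining the Hadamard factorisation of the order-one function $\mathcal{L}$ with the Nevanlinna defect relation, exploiting that $\mathcal{L}$ already has a near-Picard exceptional value at $\infty$ and cannot accommodate both $\alpha_1,\alpha_2$ as additional heavily ramified values.
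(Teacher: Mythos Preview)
Your case split on $a\ne1$ versus $a=1$ is both unnecessary and, as written, illegitimate: the lemma does not assume $E_f(S,t)=E_{\mathcal L}(S,t)$, so you cannot invoke the sharing hypothesis to kill $a\ne1$. More to the point, the split is not needed at all: for \emph{any} nonzero constant $a$, the identity $\mathcal F\mathcal G=a$ already forces every zero of $\mathcal F$ (that is, every $\alpha_i$-point of $f$) to be a pole of $\mathcal G$, hence a pole of $\mathcal L$. Your Second Main Theorem argument for $p\ge3$ therefore works verbatim for all $a\ne0$, and is in fact tidier than the paper's, which splits further into $p\ge4$ (pure Picard: $f$ would omit at least three of the $\alpha_i$) and $p=3$ (two omitted $\alpha_i$ plus a multiplicity bound $r\ge n/m_3$ at the remaining one, then SFT for $f$).

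For $p=2$ the paper argues directly: since $\mathcal L$ has at most one pole, $f$ omits at least one of the $\alpha_i$, say $\alpha_1$; then any $\alpha_2$-point of $\mathcal L$ of multiplicity $r$ is a pole of $f$ of multiplicity $\nu$ with $rm_2=\nu n$, so $r\ge n/m_2$, and the Second Main Theorem applied to $\mathcal L$ at $\alpha_1,\alpha_2,\infty$ yields the contradiction $T(r,\mathcal L)\le \tfrac{m_2}{n}T(r,\mathcal L)+O(\log r)$. Your route through $d=\gcd(m_1,m_2)$ is not quite right (the divisibility constraint $\nu m_j=\mu n$ gives $\nu\ge n/\gcd(m_j,n)$, not $n/\gcd(m_1,m_2)$), and your residual subcase $m_1=m_2$ is only sketched: ``Hadamard factorisation plus defect relation'' is not a proof. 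The paper avoids this residual case entirely by exploiting the asymmetry that $f$ already omits one of the two $\alpha_i$ before passing to the multiplicity bound on the $\mathcal L$ side.
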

\begin{proof}
%	For the proof of this lemma we proceed in a similar way as the proof of {\em Lemma \ref{l3}} except part (ii) and part (iii) which are clarified below. 
%	\par Let $FG=a$.
%	\par For $k\geq 3$, the proof is same as the proof of part (i) of {\em Lemma \ref{l3}}.
On the contrary, suppose that $\mathcal{F}\mathcal{G} = a$. Then
\be\label{e01}\prod_{i=1}^{p}(f-\alpha_i)^{m_i}\prod_{i=1}^{p}(\mathcal{L}-\alpha_i)^{m_i}=a\left(\frac{a_{0}}{a_{n}}\right)^{2}=a_{1}(say).\ee

It is clear from (\ref{e01} ) that each $\alpha_i$-point of $f$ is a pole of $\mathcal{L}$ and vice-versa.\\
%Now let us consider the following cases.%We prove this part in two sub-parts as follows.\\ 
{\bf\underline{Case-i:}} Let $p\geq 4$. Since  an L- function has at most one pole, then in view of (\ref{e01}) we can say that $f$ has at least three $\alpha_i$-points which are picard exeptional values. That is, the meromorphic function $f$ omits at least 3 values, so  $f$ must be constant. This contradicts our assumption. \\
{\bf\underline{Case-ii:}} Let $p = 3$. Again like the arguments made above we can say that $f$ omits two values say $\alpha_{1}, \alpha_{2}$. Hence $\alpha_{3}$ points of $f$ are the poles of $\mathcal{L}$. Since $z=1$ is the only pole of $\mathcal{L}$, so let  $z=1$ be the $\alpha_3$-point of $f$ of multiplicity $r$ and the pole of $\mathcal{L}$ of multiplicity $u$. Then $m_3r = nu$. Since $u\geq 1$, so $r \geq \frac{n}{m_{3}}$; i.e., $\frac{1}{r}\leq \frac{m_{3}}{n}$.
Hence using the second fundamental theorem, we obtain 
\begin{flalign*}
T(r, f) & \leq \sum_{i=1}^{3}\ol{N}(r, \alpha_i; f)+S(r,f)\\
& \leq \frac{m_3}{n}{N}(r,\alpha_3; f)+O(\log r),
\end{flalign*}
which is  a  contradiction as $m_3 < n$. \\
{\bf\underline{Case-iii}:} Let $p=2$.\\ Since  an L- function has at most one pole, then in view of (\ref{e01}) we can say that $f$ omits at least one $\alpha_i$-points, say $\alpha_1$.

%and $gcd(m_1,n)=1$. Since $m_1+m_2=n$, so $gcd(m_2,n)=1$. % we have the following analysis.

Let $z_0$ be a $\alpha_2$ point of $\mathcal{L}$ of multiplicity $r$, then it will be a pole of $f$ of multiplicity $\nu$, such that $rm_2=\nu n$. Since $\nu \geq 1$, so $r \geq \frac{n}{m_2}$ ; i.e., $\frac{1}{r} \leq\frac{m_2}{n}.$ Now using the second fundamental theorem  in view of $\ol{N}(r,\infty;\mathcal{L})=O(\log r)$ we get
\beas T(r,\mathcal{L}) &\leq& \ol N(r,\alpha_{1};\mathcal{L})+\ol N(r,\alpha_{2};\mathcal{L})+\ol  N(r,\infty;\mathcal{L})+S(r,\mathcal{L})\\&\leq& \frac{m_2}{n} T(r,\mathcal{L})+O(\log r),\eeas
which is a contradiction as $n>m_2$. 	
%(iii) Let $p=2$ and $gcd(m_i,n)=b_i\ne 1$ for $i=1,2$. Let $z_0$ be a $\alpha_i$ point of $\mathcal{L}$ of multiplicity $\xi_i$ and a pole a $f$ of multiplicity $\nu_i$. Then $m_i \xi_i =\nu_i n$. Since $gcd(m_i,n)=b_i$, so $m_i=b_i \eta_i$ and $n=b_i \zeta_i$ for some $\eta_i,\zeta_i \in \mathbb{N}$, where $gcd(\eta_i,\zeta_i)=1$. Now $m_i \xi_i =\nu_i n$ implies $b_i\eta_i \xi_i=\nu_i b_i \zeta_i$; i.e.,  $\eta_i \xi_i=\nu_i \zeta_i$. Since $gcd(\eta_i,\zeta_i)=1$, so $\zeta_i$ divides $\xi_i$. Therefore $\xi_i \geq \zeta_i$; i.e., $\frac{1}{\xi_i}\leq \frac{1}{\zeta_i} = \frac{b_i}{n}$. Now using the second fundamental theorem we get 
%\beas T(r,\mathcal{L}) &\leq& \ol N(r,\alpha_{1};\mathcal{L})+\ol N(r,\alpha_{2};\mathcal{L})+\ol  N(r,\infty;\mathcal{L})+S(r,\mathcal{L})\\&\leq& \frac{(b_1+b_2)}{n} T(r,f)+S(r,\mathcal{L}),\eeas
%	which is a contradiction as $n\geq (b_1+b_2)+1$.
\end{proof}

\begin{lem}\label{l04}
%	Let $F=\d\frac{f^n}{\prod\limits_{j=1}^{k}(f-\beta_j)^{m_{j}}}$ and $G=\d\frac{\mathcal{L}^n}{\prod\limits_{j=1}^{k}(\mathcal{L}-\beta_j)^{m_{j}}}$, where $f$ be a non-constant meromorphic function having finite number of poles and $\mathcal{L}$ be an L-function, $n\geq 2$, $\sum\limits_{j=1}^{k}m_{j}\leq n-1$ and $\beta_j\in \mathbb{C}-\{0\}$ for $j=1,2,\ldots ,k$. 
Let $\mathbb{F}, \mathbb{G}$ as defined by (\ref{el05}). If
\begin{enumerate}
\item[(a)] $k\geq2$, or
\item[(b)] $k=1$ and $n\geq 3b_1+1$, where $b_1=gcd(m_1,n)$,
\end{enumerate} 
Then $\mathbb{F}\mathbb{G} \not= a$, where $a$ is non-zero complex constant.\end{lem}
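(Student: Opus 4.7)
Assume for contradiction that $\mathbb{F}\mathbb{G}=a$ for some nonzero constant $a$. Substituting $\mathbb{F}=R(f)$ and $\mathbb{G}=R(\mathcal{L})$ converts the hypothesis into the functional identity
\[
f^n\mathcal{L}^n \;=\; C\prod_{j=1}^k(f-\beta_j)^{m_j}\prod_{j=1}^k(\mathcal{L}-\beta_j)^{m_j}, \qquad C = a\,a_i^2/a_n^2 \neq 0.
\]
Following the strategy of {\em Lemma \ref{14}}, the workhorse is a local order comparison at every $z_0\neq 1$: at such a point $\mathcal{L}$ is holomorphic (since its only pole can lie at $z=1$), and $\beta_j\neq 0$ because $\phi(0)=a_0\neq 0$. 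Running through every possible pair of values of $f$ and $\mathcal{L}$ at $z_0$, most options force incompatible orders on the two sides and are discarded. The only admissible configurations are: (I) each zero of $f$ is a $\beta_{j'}$-point of $\mathcal{L}$ with $n\mu = m_{j'}\nu$; (II) each $\beta_j$-point of $\mathcal{L}$ is a zero of $f$; and (III) each pole and each $\beta_j$-point of $f$ is a zero of $\mathcal{L}$, with $n\sigma=(n-i)\mu$ or $n\sigma=m_j\nu$. These divisibility relations immediately yield the multiplicity lower bounds used below.

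\textbf{Case (a): $k\geq 2$.} Configuration (I) forces every $\beta_j$-point of $\mathcal{L}$ (away from $1$) to have multiplicity $\nu\geq n/m_j$, so $\overline{N}(r,\beta_j;\mathcal{L})\leq (m_j/n)T(r,\mathcal{L})+O(\log r)$; summing and using $\sum m_j=i$ gives $\sum_j\overline{N}(r,\beta_j;\mathcal{L})\leq (i/n)T(r,\mathcal{L})+O(\log r)$. Applying Nevanlinna's second fundamental theorem to $\mathcal{L}$ with the $k+2$ targets $0,\infty,\beta_1,\ldots,\beta_k$, and noting $\overline{N}(r,\infty;\mathcal{L})=O(\log r)$, I obtain
\[
kT(r,\mathcal{L}) \;\leq\; \bigl(1+\tfrac{i}{n}\bigr)T(r,\mathcal{L}) + O(\log r),
\]
equivalently $(k-1-\tfrac{i}{n})T(r,\mathcal{L})\leq O(\log r)$. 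Since $k\geq 2$ and $i\leq n-1$, this coefficient is at least $1/n>0$, forcing $T(r,\mathcal{L})=O(\log r)$, which contradicts the growth $T(r,\mathcal{L})\asymp r\log r$ of a non-constant L-function.

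\textbf{Case (b): $k=1$ and $n\geq 3b_1+1$.} Here $i=m_1$, and the divisibility constraints force every $\beta_1$-point of $f$ or $\mathcal{L}$, and every pole of $f$, at $z_0\neq 1$, to have multiplicity at least $n/b_1$. Consequently
\[
\overline{N}(r,\beta_1;\mathcal{L})\leq \tfrac{b_1}{n}T(r,\mathcal{L})+O(\log r), \qquad \overline{N}(r,\infty;f)+\overline{N}(r,\beta_1;f)\leq \tfrac{2b_1}{n}T(r,f)+O(\log r),
\]
while the correspondences give $\overline{N}(r,0;\mathcal{L})\leq \tfrac{2b_1}{n}T(r,f)+O(\log r)$ and $\overline{N}(r,0;f)\leq \tfrac{b_1}{n}T(r,\mathcal{L})+O(\log r)$. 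Feeding these into the second fundamental theorem applied to $f$ and to $\mathcal{L}$, each with targets $\{0,\infty,\beta_1\}$, produces two linear inequalities in $T(r,f)$ and $T(r,\mathcal{L})$; eliminating $T(r,\mathcal{L})$ yields
\[
\frac{n-3b_1}{n-b_1}\,T(r,f)\leq O(\log r).
\]
The hypothesis $n\geq 3b_1+1$ renders the prefactor positive, so $T(r,f)=O(\log r)$ and then $T(r,\mathcal{L})=O(\log r)$, the required contradiction. The main technical obstacle is the exhaustive local-order case analysis producing (I)--(III) and the sharp multiplicity lower bounds; once these are in hand, the Nevanlinna bookkeeping and the algebraic simplification that yields the clean factor $(n-3b_1)/(n-b_1)$ are essentially mechanical.
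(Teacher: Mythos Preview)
Your proof is correct and follows essentially the same strategy as the paper's: assume $\mathbb{F}\mathbb{G}=a$, extract multiplicity constraints from a local order comparison, and derive a contradiction via the second fundamental theorem. The only notable difference is in case~(b): the paper first notes that $\mathbb{F}\mathbb{G}=a$ immediately yields $T(r,f)=T(r,\mathcal{L})+O(1)$, so a single application of the second fundamental theorem to $\mathcal{L}$ (targets $0,\beta_1,\infty$) gives $(1-3b_1/n)\,T(r,\mathcal{L})\le O(\log r)$ directly, avoiding your two applications and elimination step.
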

\begin{proof}
On the contrary suppose that $\mathbb{F} \mathbb{G}\equiv a$. Then 
\bea\label{000} \d\frac{f^n}{\prod\limits_{j=1}^{k}(f-\beta_j)^{m_{j}}}.\d\frac{\mathcal{L}^n}{\prod\limits_{j=1}^{k}(\mathcal{L}-\beta_j)^{m_{j}}}\equiv a\left(\frac{a_{i}}{a_{n}}\right)^{2}=a^{'}(say),\eea and hence   \bea\label{0000} T(r,f)= T(r,\mathcal{L})+O(1).\eea 		
It is clear from (\ref{000} ) that  $\beta_j$ point of $f$ is a zero  of $\mathcal{L}$ and vice-versa. 

{\bf(a)} Let $k\geq 2$.
If $z_0$ be a zero of $\mathcal{L}-\beta_j$ with
multiplicity $p$, then $z_0$ is a zero of $f$ with multiplicity $q$ such that $m_jp=nq$; i.e., $p\d\geq \frac{n}{m_j}$. Therefore  $\ol{N}(r,\beta_j;\mathcal{L})\leq \d\frac{m_j}{n}N(r,\beta_j;\mathcal{L})$. %Again $f$ has only finite number of poles so  $\ol{N}(r,\infty;f)=S(r,f)$

So, using the second fundamental theorem and $\ol{N}(r,\mathcal{L})=O(\log r)$, we get \bea\nonumber (k-1)T(r,\mathcal{L})&\leq& \sum_{j=1}^{k}\ol{N}(r,\beta_j;\mathcal{L})+\ol{N}(r,\infty;\mathcal{L})+S(r,\mathcal{L}) \nonumber\\ & 
\leq& \sum_{j=1}^{k}\frac{m_j}{n}T(r,\mathcal{L})+O(\log r)\nonumber\\&
\leq& (1-\frac{1}{n})T(r,\mathcal{L})+O(\log r)\nonumber,
\eea
which  contradicts $k\geq2$.

{\bf(b)} Let $k=1$ and $gcd(n,m_1)=b_1$, therefore $gcd(n,n-m_1)=b_1$. Then from (\ref{000}) we have 
\bea\label{ec2} \frac{\mathcal{L}^n}{(\mathcal{L}-\beta_1)^{m_1}}=\frac{a^{'}(f-\beta_1)^{m_1}}{f^n}. \eea\\
If $z_0$ be a zero of $\mathcal{L}-\beta_1$ with
multiplicity $p$, then $z_0$ is a zero of $f$ with multiplicity $q$ such that $m_1p=nq$. Since $gcd(n,m_1)=b_1$, so $m_1=b_1 \eta$ and $n=b_1 \zeta$ for some $\eta,\zeta \in \mathbb{N}$, where $gcd(\eta,\zeta)=1$. Now $m_1 p = nq$ implies $b_1\eta p=q b_1\zeta$; i.e.,  $\eta p=q\zeta$. Since $gcd(\eta,\zeta)=1$, so $\zeta$ divides $p$. Therefore $p \geq \zeta$; i.e., $p\d\geq \frac{n}{b_1}$. Therefore, $\ol{N}(r,\beta_1;\mathcal{L})\leq \frac{b_1}{n}T(r,\mathcal{L})$ and similarly $\ol{N}(r,\beta_1;f)\leq \frac{b_1}{n}T(r,f)$ .\\
Also let $z_0$ be a pole of $f$ of multiplicity $\nu$, then $z_0$ will be a zero of $\mathcal{L}$ of multiplicity $\xi$ such that $\nu (n-m_1) = \xi n$. Since $gcd(n,n-m_1)=b_1$, hence $\frac{1}{\nu}\leq \frac{b_1}{n} $. Therefore $\ol{N}(r,\infty;f)\leq \frac{b_1}{n}T(r,f).$ Also we know that $\ol{N}(r,\infty;\mathcal{L})=O(\log r).$\\
Now from (\ref{ec2}) it is clear that $\ol{N}(r,0;\mathcal{
L})= \ol{N}(r,\infty;f)+\ol{N}(r,\beta_1;f).$ \\
Now using the second fundamental theorem in view of (\ref{0000}) we get \bea T(r,\mathcal{L})&\leq& \ol{N}(r,\beta_1;\mathcal{L})+\ol{N}(r,\infty;\mathcal{L})+\ol{N}(r,0;\mathcal{L})+S(r,\mathcal{L}) \nonumber\\ & 
\leq& \frac{3b_1}{n}T(r,\mathcal{L})+O(\log r)\nonumber
\eea
Which is a contradiction as $n\geq 3b_1+1$.
\end{proof}

\section{Proof Of The Theorems}

{\bf Proof of  Theorem 1.1 :} 
We  prove the theorem step by step as follows.

\underline{$\bf{(i) \implies (ii)}$ :
} Suppose $f$ be a non-constant meromorphic function and $\mathcal{L}$ be a non-constant L-function such that  %$P(f)=P(\mathcal{L})\implies f=\mathcal{L}$ and
$E_f(S,t)=E_{\mathcal{L}}(S,t)$, where $t\in\mathbb{N}\cup\{0\}$. Consider $\mathcal
{F}$ and $\mathcal
{G}$ as defined by (\ref{el04}). Then for \begin{itemize}
\item [(a)]$t\geq 2$ and $n\geq 2s+5$, or
\item [(b)] $t=1$ and $n\geq 2s+6$, or
\item [(c)]$t=0$ and $n\geq 2s+11$,
\end{itemize} in view of the {\em Lemma \ref{l11}} we get 
$\d\frac{1}{\mathcal
{F}-1}=\frac{A}{\mathcal
{G}-1}+B$, where $A(\neq0), B\in \mathbb{C}$.
Hence we have \be\label{e1}T(r,\mathcal{F})=T(r,\mathcal{G})+O(1).\ee Since  \be\label{e3} T(r,\mathcal{F})=nT(r,f)+O(1)\;\; and\;\; T(r,\mathcal{G})=nT(r,\mathcal{L})+O(1).\ee So (\ref{e1}) implies that
\be\label{e2}T(r,f)=T(r,\mathcal{\mathcal{L}})+O(1).\ee
Now in view of $\ol{N}(r,\infty,\mathcal{L})=O(\log r)$ using (\ref{e3}) and (\ref{e2}) we get 
\beas &&
\ol{N}(r,0; \mathcal{F})+	\ol{N}(r,\infty; \mathcal{F})+	\ol{N}(r,0; \mathcal{G})+	\ol{N}(r,\infty; \mathcal{G})\\ &\leq& pT(r,f)+pT(r,\mathcal{L})+\ol N(r,\infty;f)+\ol N(r,\infty;\mathcal{L})\\
&=&(2p+1)T(r,f)+O(\log r)\\
&<&\frac{2p+2}{n}T(r,\mathcal{F})\\&\leq& T(r,\mathcal{F}) \;\;\; [\because n \geq 2p + 2].
\eeas
So in view of {\em Lemma \ref{l2}}, we have either $\mathcal{F}\mathcal{G} = 1$ or $\mathcal{F} = \mathcal{G}$.

Now again in view of {\em Lemma \ref{14}} we have $\mathcal{F} \mathcal{G}\neq 1$. Hence $\mathcal{F}= \mathcal{G}$. That is, we get 
\be\nonumber
P(f)=
P(\mathcal{L}),
\ee
which by condition (i) implies $f=\mathcal{L}$.\\

\underline{$\bf{(ii)\implies (i)}$ :}
Let $P(f)=P(\mathcal{L})$. That is, 
$$\prod_{i=1}^{p}(f-\alpha_i)^{m_i}=\prod_{i=1}^{p}(\mathcal{L}-\alpha_i)^{m_i},$$ which implies $f$ and $\mathcal{L}$ share $(S,\infty)$. Therefore, obviously $f$ and $\mathcal{L}$ share $(S,t)$ for $t\in\mathbb{N}\cup\{0\}$. Hence by condition (ii), we have $f= \mathcal{L}$. \\

%\underline{$\bf{(iii)\implies (i) :}$} Since every $SUPM$ is a $UPM$, so $(iii)\implies (i)$.\\
%This completes the proof.
{\bf Proof of Theorem 1.2 :} Let us consider $\mathbb{F}$ and $\mathbb{G}$ as defined by (\ref{el05}). % \bea\label{el4} F= R(f), \;\;\;\; G=R(\mathcal{L}),\eea where $R(z)$ defined by the equation (\ref{el2}).
Let $f$ be a non-constant meromorphic function and $\mathcal{L}$ be a non-constant L-function such that  %$P(f)=P(\mathcal{L})\implies f=\mathcal{L}$ and
$E_f(S,t)=E_{\mathcal{L}}(S,t)$, where $t\in\mathbb{N}\cup\{0\}$. Then $\mathbb{F}$, $\mathbb{G}$ share $(1,t)$. 
Now for  \begin{itemize}
\item [(a)]$t\geq 2$ and $n\geq 2l+5$, or
\item [(b)] $t=1$ and $n\geq 2l+6$, or
\item [(c)] $t=0$ and $n\geq 2l+11$,
\end{itemize}
in view of  {\em Lemma \ref{l1.1}} we have
\bea\label{ep42}\frac{1}{\mathbb{F}-1}=\frac{A}{\mathbb{G}-1}+B,\eea where $A(\neq0), B\in\mathbb{C}$. % and 

From (\ref{ep42})  we easily obtain \bea \label{ep6}T(r,f) = T(r,\mathcal{L}) + S(r,f).\eea
\par Now in view of  $\ol{N}(r,\infty,\mathcal{L})=O(\log r)$, (\ref{ep6}) and from the construction of $\mathbb{F}$ and $\mathbb{G}$ we get 
\beas &&
\ol{N}(r,0; \mathbb{F})+	\ol{N}(r,\infty; \mathbb{F})+	\ol{N}(r,0; \mathbb{G})+	\ol{N}(r,\infty; \mathbb{G})
\\ &\leq&\ol{N}(r,0;f)+\sum_{j=1}^{k}\ol{N}(r,\beta_j;f)+\ol{N}(r,\infty;f)+\ol{N}(r,0;\mathcal{L})+\sum_{j=1}^{k}\ol{N}(r,\beta_j;\mathcal{L})+\ol{N}(r,\infty;\mathcal{L})
\\ &\leq& (2+k)T(r,f)+(1+k)T(r,\mathcal{L})+O(\log r)\\
&=&(2k+3)T(r,f)+O(\log r)\\
&<&\frac{2k+4}{n}T(r,\mathbb{F})\\\nonumber
&\leq&T(r,\mathbb{F}){\;\;}[\because\;\; n\geq 2k+4].
\eeas
So in view of {\em Lemma \ref{l2}}, we have either $\mathbb{F}\mathbb{G} \equiv 1$ or $\mathbb{F} \equiv \mathbb{G}$.
Again in view of {\em Lemma \ref{l04}} we have  $\mathbb{F}\mathbb{G} \not\equiv1$. Thus $\mathbb{F} \equiv \mathbb{G}$; i.e., $R(f)=R(\mathcal{L})$.

Therefore we find that $E_f(S, t)=E_\mathcal{L}(S,t)\implies f=\mathcal{L}$, whenever $R(f)=R(\mathcal{L})\implies f=\mathcal{L}$. That is $(i)\implies(ii).$ 
\vspace{0.1in}\par To show $(ii) \implies (i)$, suppose that $E_f(S, t)=E_\mathcal{L}(S,t)\implies f=\mathcal{L}$.
Let $R(f)=R(\mathcal{L})$, then we have $R(f)-1=R(\mathcal{L})-1$; i.e., $\d\frac{P(f)}{\phi(f)}=\d\frac{P(\mathcal{L})}{\phi(\mathcal{L})}.$ Therefore  $f$ and  $\mathcal{L}$ share $(S,\infty)$, which implies $E_f(S, t)=E_\mathcal{L}(S,t)$ and hence  $f=\mathcal{L}$.
\\\\{\bf Proof of Corollory \ref{c1} and Corollary \ref{c2}:}	The proof of {\em Corollory \ref{c1}} and {\em Corollary \ref{c2}} directly follows from the proof of {\em Theorem \ref{t1}} and {\em Theorem \ref{t2}} respectively, considering $\ol N(r,\infty;f)=O(\log r)$. Hence we omit the proof.
% 	\section{Statements and Declarations}
% 	\noindent {\bf Funding:} There is no funding from any source for this article.\\
%	{\bf Conflict of interest:} Both the authors of this article declare that they do not have any conflict of interest.\\
%	{\bf Ethics approval:} This article does not contain any studies with human participants or animals performed by any of the authors.
\begin{ack}
\par Sanjay Mallick is thankful to``Science and Engineering Research Board, Department of Science and Technology, Government of India"  for financial support to pursue this research work under the Project File No. EEQ/2021/000316.
\par	 Ripan Saha is thankful to the Department of Atomic Energy (DAE), India, for financial support to pursue this work $[$No. $0203/13(41)/2021$-$R\&D$-$II/13168]$. 	
\end{ack}

\end{document}